\numberwithin{equation}{section}
\g@addto@macro\th@plain{\thm@headpunct{}}
\numberwithin{equation}{section}
\theoremstyle{plain}
\newtheorem{thm}{Theorem}
\newtheorem{lemma}{Lemma}
\theoremstyle{definition}
\newtheorem{example}{Example}
\newtheorem{fact}{Fact}
\newtheorem{defin}{Definition}
\newtheorem{remark}{Remark}
\DeclareMathOperator*{\argmin}{\arg\min}
\DeclareMathOperator*{\argmax}{\arg\max}
\DeclareMathOperator*{\Argmin}{\mathrm{Arg}\min}
\newcommand{\tr}{\mathrm{tr}}
\newcommand{\diag}[1]{\mathrm{diag}(#1)}
\newcommand{\odiag}[1]{\mathrm{odiag}(#1)}
\newcommand{\Spp}{\mathrm{S}_{++}}
\newcommand{\Sppone}{\mathrm{S}_{++}^{(1)}}
\newcommand{\Szero}{\mathrm{Sym}^{(0)}}
\newcommand{\R}{\mathbb{R}}
\newcommand{\Sym}{\mathrm{Sym}}
\newcommand{\sign}{\mathrm{sign}}
\newcommand{\Diag}{\mathrm{Diag}}
\newcommand{\Diagp}{\mathrm{Diag}_+}
\newcommand{\D}{\mathcal{D}}
\newcommand{\Supp}{\mathbf{s}_\ast}
\renewcommand{\v}[1]{\mathrm{vec}(#1)}
\newcommand{\opnorm}{\@ifstar\@opnorms\@opnorm}
\newcommand{\@opnorms}[1]{%
	\left|\mkern-1.5mu\left|\mkern-1.5mu\left|
	#1
	\right|\mkern-1.5mu\right|\mkern-1.5mu\right|
}
\newcommand{\@opnorm}[2][]{%
	\mathopen{#1|\mkern-1.5mu#1|\mkern-1.5mu#1|}
	#2
	\mathclose{#1|\mkern-1.5mu#1|\mkern-1.5mu#1|}
}
\title[Identifying Network Hubs with the Partial Correlation Graphical LASSO]{Identifying Network Hubs with the Partial Correlation Graphical LASSO}
\author[M. Bogdan]{Ma{\l}gorzata Bogdan}
\email{malgorzata.bogdan@math.uni.wroc.pl}
\address{Mathematical Institute, University of Wroc{\l}aw, pl. Grunwaldzki 2/4, 50-384, Wroc{\l}aw}
\author[A. Chojecki]{Adam Chojecki
\orcidlink{0009-0008-2902-4096}}
\email{adam.chojecki@pw.edu.pl}
\address{Faculty of Mathematics and Information Sciences, Warsaw University of Technology, Koszykowa 75, \mbox{00-662} Warsaw, Poland}
\author[I. Hejný]{Ivan Hejný}
\email{ivan.hejny@stat.lu.se}
\address{Department of Statistics, Lund University, Box 743, SE-220 07 Lund, Sweden}
\author[B. Kołodziejek]{Bartosz Kołodziejek \orcidlink{0000-0002-5220-9012}}
\email{bartosz.kolodziejek@pw.edu.pl}
\address{Faculty of Mathematics and Information Sciences, Warsaw University of Technology, Koszykowa 75, \mbox{00-662} Warsaw, Poland}
\author[J. Wallin]{Jonas Wallin}
\email{jonas.wallin@stat.lu.se}
\address{Department of Statistics, Lund University, Box 743, SE-220 07 Lund, Sweden}
\thanks{For the purpose of Open Access, the authors have applied a CC-BY public copyright licence to any Author Accepted Manuscript (AAM) version arising from this submission.
}
\begin{document}

\begin{abstract}


Graphical LASSO (GLASSO) is a widely used method for estimating sparse precision matrices and learning undirected graphical models in high-dimensional settings. Because GLASSO penalizes entries of the precision matrix directly, however, it is not scale-invariant. Partial Correlation Graphical LASSO (PCGLASSO), introduced by Carter et al. (2024), addresses this limitation by penalizing partial correlations, which directly characterize conditional dependence. In this paper, we study both statistical and computational properties of the PCGLASSO estimator. Our main contribution is the introduction of a scale-invariant irrepresentability condition for PCGLASSO and the proof that this condition is sufficient for consistent model selection. We further show that this condition is weaker than the corresponding irrepresentability condition for GLASSO, helping to explain the improved empirical behavior of PCGLASSO in settings such as hub-structured graphs. In addition, we develop two efficient algorithms for computing the estimator and analyze the nonconvex optimization problem underlying PCGLASSO, deriving conditions for global uniqueness and showing consistency of all minimizers.
\end{abstract}

\subjclass[2020]{Primary 62H22; secondary 62H12, 62J07, 90C26.}

\maketitle

\noindent \textbf{Keywords.} \keywords{ Partial Correlation; Precision Matrix Estimation; Gaussian Graphical Model; Scale Invariance; Non-convex Optimization; Hub Detection}

\section{Introduction}\label{sec-intro}

Estimating a sparse precision matrix is a cornerstone of modern high-dimensional statistics, providing a powerful tool for uncovering conditional independence structures in Gaussian graphical models. These models are widely applied in fields ranging from genomics to finance, where understanding the underlying network of relationships between variables is of paramount importance. The classical approach for this task is the Graphical LASSO (GLASSO), which has become a standard due to its computational tractability and theoretical guarantees \cite{friedman2008sparse, YuanLi07}. The GLASSO estimator is defined as the solution to a convex optimization problem:
\begin{align}\label{eq:glasso}
\hat{K}_{\mathrm{GLASSO}} = \argmin_{K\in\Spp} \left\{-\log\det(K)+\tr(S K) + \lambda \|K\|_{1,\mathrm{off}}\right\},
\end{align}
where $S$ is the sample covariance matrix from $n$ independent copies of a $p$-dimensional random vector $X$, $\|K\|_{1,\mathrm{off}} = \sum_{i\neq j} |K_{ij}|$
is the $\ell_1$-penalty on the off-diagonal entries, and $\lambda\geq 0$ is a tuning parameter.

Despite its success, the GLASSO suffers from a notable limitation: it is not scale-invariant. Because the penalty is applied to the raw entries of the precision matrix $K$, simply rescaling the variables can alter the estimated graph structure. This makes the results sensitive to data preprocessing choices, such as standardization. A more robust and often more interpretable approach is to enforce sparsity directly on the partial correlations,
\[        
P(K)_{ij}=-\frac{K_{ij}}{\sqrt{K_{ii}K_{jj}}},
\]
which are naturally normalized measures of conditional dependence. This motivates penalizing the likelihood based on the partial correlations:
\begin{align}\label{eq:pcg0}
\Argmin_{K\in\Spp} \left\{ -\log\det(K) + \tr(SK) + \lambda\| P(K)\|_{1,\mathrm{off}} \right\}.
\end{align}

We note that penalizing the raw off-diagonal elements of $K$ can work against the goal of attenuating strong conditional dependencies, since their magnitudes need not track those of the corresponding partial correlations. Indeed, one can construct a positive definite $3\times 3$ matrix $K$ such that $K_{12} < K_{13} < K_{23}$ but $|P(K)_{12}|>|P(K)_{13}|>|P(K)_{23}|$,
 e.g.,
 \[
K = \begin{pmatrix}
    1 & 1 & 2 \\ 
    1 & 4 & 3 \\
    2 & 3 & 25
\end{pmatrix}\implies
P(K) =  \begin{pmatrix}
    1 & -0.5 & -0.4 \\ 
    -0.5 & 1 & -0.3 \\
    -0.4 & -0.3 & 1
\end{pmatrix}.
\]
Thus, the largest raw off-diagonal entry may correspond to the weakest conditional dependence, so penalizing raw entries can produce the opposite of the intended sparsity effect.

However, directly penalizing the partial correlation matrix renders the problem non-convex in the precision matrix $K$. This paper focuses on a systematic study of an estimator based on this principle, known as the Partial Correlation Graphical LASSO (PCGLASSO).

\subsection{Problem setup}

Let $X=(X_1,\ldots,X_p)^\top$ be a zero-mean random vector with covariance matrix $\Sigma^\ast$ and precision matrix $K^\ast=(\Sigma^\ast)^{-1}$. Suppose we observe $n$ independent copies $(X^{(i)})_{i=1}^n$ of $X$ and $S$ is the sample covariance matrix.

Following \cite{Carter}, we leverage a natural factorization to handle the non-convex penalty in \eqref{eq:pcg0}. Any positive definite matrix $K$ admits a unique factorization
\[
K=DRD,
\]
where $R$ is a positive definite matrix with unit diagonal entries and $D$ is a diagonal matrix with positive entries. Here, $R_{ij} = - P(K)_{ij}$ for $i\neq j$ and $D^2 = \diag{K}$ is the diagonal matrix whose $(i,i)$ entry is $K_{ii}$. 

Rewriting the problem \eqref{eq:pcg0} in terms of $(R,D)$  makes the $\ell_1$-penalty convex in $R$, though it introduces a non-convex coupling $\tr(S D R D)$ between $R$ and $D$ in the likelihood term.

We define the PCGLASSO estimator as
\[
\hat{K}_{\mathrm{PCG}} =  \hat{D}\hat{R}\hat{D},
\]
with $(\hat{R},\hat{D})$ obtained by solving 
\begin{align}\label{eq:main_problem_s}
    (\hat{R},\hat{D}) \in \Argmin_{R,D} \Big\{ -\log\det(DRD) + \tr(S D R D) +  \lambda\| R\|_{1,\mathrm{off}} +2\,\alpha\log\det(D) \Big\}.
\end{align}
The optimization is over matrices $R$ in the set $\Sppone$ of positive definite matrices with unit diagonal and over diagonal matrices $D$ with positive diagonal entries. The parameters $\lambda\geq0$ and $\alpha<1$ serve as the hyperparameters of the method. 

Note that compared to \eqref{eq:pcg0}, we introduced a logarithmic penalty on the diagonal elements. In \cite{Carter}, $\alpha = 4/n$ is recommended based on univariate MSE arguments, but here we treat $\alpha$ as a free parameter.

Finally, it is worth noting that problem \eqref{eq:main_problem_s} is not only the $\ell_1$-penalized Gaussian log-likelihood but also coincides with the minimization of the penalized log-determinant Bregman divergence  \cite{ravikumar2011graphical,zwiernik2023entropic}. Unlike \eqref{eq:glasso}, whose objective is convex (and coercive when $S$ has positive diagonals), \eqref{eq:main_problem_s} remains non-convex even at $\lambda=\alpha=0$ due to the mixed term $\tr(S D R D)$.

A key property of the PCGLASSO estimator defined by \eqref{eq:main_problem_s} is its scale invariance. An estimator $\hat{K}(S)$ is scale-invariant if 
\[
\hat{K}(H S H) = H^{-1} \hat{K}(S) H^{-1}
\]
for every diagonal matrix $H$ with positive entries.
The PCGLASSO estimator satisfies this property \cite[Proposition 2]{Carter}, which allows us to reformulate the problem entirely in terms of the sample correlation matrix $C$, i.e.,
\[
C = H S H \quad \text{with} \quad H = \diag{S}^{-1/2}.
\]
Henceforth we work with the equivalent formulation
\begin{align}\label{eq:main_problem}
    (\hat{R},\hat{D}) \in \Argmin_{R,D} \left\{-\log\det(R)-2(1-\alpha)\log\det(D) +\tr( C DRD)+ \lambda \|R\|_{1,\mathrm{off}} \right\}.
\end{align}
Note that $(\hat{R},\hat{D})$ solves \eqref{eq:main_problem} if and only if $(\hat{R}, \diag{S}^{-1/2} \hat{D})$ solves \eqref{eq:main_problem_s}. 

Finally, note that the support (or sign pattern) of the PCGLASSO estimator \(\hat{R}\) depends only on the sample correlation matrix \(C\), rather than on the raw sample covariance matrix \(S\). For any fixed pair \((i,j)\) with \(C_{ij}^\ast \neq 0\), standardization removes nuisance marginal scales. Indeed, assuming \(X \sim \mathcal{N}_p(0,\Sigma^\ast)\), we have
\[
\mathrm{Var}\left( \frac{C_{ij}}{C_{ij}^\ast}\right) = 
\frac{1}{n} \frac{(1-(C_{ij}^\ast)^2)^2}{(C_{ij}^\ast)^2} \leq  \frac{1}{n}\left(1+\frac{1}{(C_{ij}^\ast)^2}\right)  = \mathrm{Var}\left( \frac{S_{ij}}{\Sigma^\ast_{ij}}\right).
\]
Thus, on a relative-error scale, the sample correlation is asymptotically less variable than the sample covariance. This suggests that correlation-based procedures may have an advantage for support recovery over covariance-based analogues such as the classical unstandardized GLASSO.

\subsection{Literature review}

Estimating a sparse precision matrix is a cornerstone of statistical learning, particularly for uncovering conditional independence structures in Gaussian graphical models. The seminal work on the GLASSO provided a tractable convex framework for this task by penalizing the Gaussian log-likelihood with an $\ell_1$-norm on the precision matrix entries \cite{friedman2008sparse, YuanLi07}. Despite its widespread adoption, a well-known limitation of the GLASSO is its lack of scale invariance. Since the penalty is applied to the raw precision matrix entries, rescaling variables can alter the estimated graph structure, making the results dependent on data preprocessing choices such as standardization.

This limitation motivated a rich line of research focused on estimators that are either inherently scale-invariant or directly target the partial correlations, which are naturally normalized measures of conditional dependence. Early work in this direction includes the Sparse Permutation Invariant Covariance Estimation (SPICE) method, which achieves scale invariance by penalizing only the off-diagonal elements of the precision matrix \cite{SPICE08}. Another major family of methods reframes the problem as a series of sparse regressions. The neighborhood selection framework of  \cite{meinshausen2006high} and the Sparse Partial Correlation Estimation (SPACE) method \cite{SPACE09} estimate the graph structure by regressing each variable against all others using the LASSO. These approaches are particularly effective at identifying hub structures but may yield asymmetric estimates of the precision matrix.

To address the symmetry issue while retaining the benefits of a regression-based formulation, subsequent methods have focused on jointly convex objectives. The CONCORD algorithm  \cite{CONCORD15}, for example, maximizes a convex surrogate likelihood composed of node-wise conditional likelihoods, ensuring a symmetric and positive-definite estimate with the same asymptotic guarantees as SPACE. These methods successfully provide scale-invariant estimation with the computational and theoretical advantages of convexity, including convergence to a unique global minimizer.

A more direct approach to penalizing partial correlations was proposed by \cite{Carter} with the PCGLASSO, the focus of our work. Unlike the aforementioned methods, PCGLASSO incorporates an $\ell_1$-penalty directly on the partial correlation values within the Gaussian log-likelihood. This formulation is arguably the most natural and interpretable way to enforce sparsity on conditional dependencies. However, this directness comes at a cost: the objective function is no longer convex due to the coupling of diagonal and off-diagonal elements in the likelihood. In their original work, \cite{Carter} proposed a simple numerical algorithm and provided compelling empirical evidence of PCGLASSO's superior performance, especially in recovering networks with heterogeneous variable scales. Yet, its practical implementation and theoretical underpinnings (including the characterization of the solution landscape, conditions for a unique solution, and formal model selection guarantees) remained largely unexplored.

Recent advances have sought to circumvent the non-convexity of direct partial correlation penalization. Two-stage approaches, for instance, first estimate the diagonal elements of the precision matrix and then solve a convex GLASSO-like problem for the off-diagonal elements, effectively turning the problem back into a convex one \cite{SPACE23}. Other work has focused on computational scalability for ultra-high-dimensional data through screening techniques that break the problem into smaller, parallelizable subproblems \cite{SCREEN16}.

While these alternative strategies are valuable, they sidestep the original non-convex problem posed by PCGLASSO. The central challenge, and the primary gap in the literature, is the lack of a comprehensive framework for understanding and solving the PCGLASSO problem as originally formulated. This paper aims to fill this gap by providing the first systematic treatment of the PCGLASSO estimator, including a highly efficient algorithm, a rigorous analysis of its theoretical properties in the non-convex setting, and novel results on model selection consistency that theoretically justify its empirical advantages.


A complementary strategy targets salient structure-such as hub nodes-without estimating the entire graph. The Inverse Principal Components for Hub Detection (IPC–HD) links hubness to the spectrum of the precision matrix, enabling direct hub estimation \cite{HUB25}. Because IPC-HD and related criteria operate on the precision (rather than the scale-free partial-correlation) matrix, they can be highly sensitive to near-collinearity and variable scaling, allowing a few strongly correlated variables to dominate hub scores; we examine this further in Section~\ref{sec:real_ex2}.

\subsection{Contribution of the paper}

While the PCGLASSO estimator was first defined in \cite{Carter}, its practical implementation and theoretical underpinnings remained largely unexplored. This paper provides a comprehensive framework for the PCGLASSO method, featuring a highly efficient algorithm and a systematic study of its theoretical properties. Our main contributions are:

\textbf{A novel and efficient algorithm}: We introduce a block coordinate descent algorithm that is substantially more efficient than previously suggested approaches. Our key algorithmic innovations include:
\begin{enumerate}
    \item A solution rooted in classical matrix theory for the $D$-subproblem. We reveal and exploit a surprising connection between the optimization over the diagonal matrix $D$ and the classical problem of scaling positive definite matrices, first studied by \cite{MO68}. By leveraging established results from this literature, we develop an efficient modified Newton-Raphson solver and derive crucial theoretical bounds on the solution.
    \item Two coordinate-descent solvers for the $R$-subproblem. We develop (i) a \emph{primal} coordinate-descent method that updates $R$ directly via closed-form element-wise updates, and (ii) an adapted GLASSO routine: an efficient \emph{dual} block-coordinate descent method in $W=R^{-1}$ that optimizes $R$ under the unit-diagonal constraint by modifying the classical GLASSO dual for this correlation-matrix structure.
\end{enumerate}


\textbf{A systematic study of theoretical properties}: We address the challenges arising from the non-convexity of the PCGLASSO objective function:
\begin{enumerate}
    \item Characterization of the solution landscape: We demonstrate that the objective function, while biconvex, is not globally convex and may admit multiple local and global minima.
    \item Conditions for a unique solution: We identify two practical and verifiable scenarios under which the problem has a unique global minimizer: when the regularization penalty $\lambda$ is small, and when the sample correlations are close to zero (i.e., the data correlation matrix $C$ is close to identity).
    \item Consistency of the estimator: We establish consistency results (Lemma \ref{lem:cons}), showing that all coordinate-wise minimizers converge to the true precision matrix as the sample size increases. This guarantees that despite the potential for multiple solutions in finite samples, the estimator is reliable in the asymptotic regime.
\end{enumerate}

\textbf{Asymptotic analysis and superior model selection}: We derive the low-dimen\-sional asymptotic distribution of the estimator and provide theoretical guarantees for model selection consistency (sparsistency). We introduce a novel, scale-invariant irrepresentability condition and show it is often significantly weaker than the corresponding condition for the standard GLASSO, providing a theoretical explanation for PCGLASSO's superior performance in recovering sparse networks, especially those with hub structures.

\textbf{Empirical validation}: Our theoretical findings are supported by extensive simulations and a real-data application, which confirm the computational efficiency and statistical accuracy of the proposed method, particularly in identifying hub-like structures where other methods falter.

The code for all the experiments is available at \url{https://github.com/PrzeChoj/pcglasso_article_code}.

\subsection{Structure of the paper}

The remainder of this paper is organized as follows. Section~\ref{sec:alg} introduces our efficient block coordinate descent algorithm, detailing the novel solvers for the diagonal scaling matrix $D$ and the partial correlation matrix $R$. In Section~\ref{sec:prop}, we conduct a thorough theoretical investigation of the PCGLASSO estimator. We analyze the non-convex objective function, establish conditions for the uniqueness of the solution, and derive consistency results. Furthermore, we study the estimator's asymptotic properties and introduce a new, weaker irrepresentability condition that guarantees model selection consistency. Section~\ref{sec:num_exp} provides empirical validation of the method through extensive simulations and a real-data analysis of a gene expression dataset. 

Additional computational, empirical, and theoretical material is provided in the Appendix. 
In Section~\ref{app:comparison_carter}, we present simulation studies comparing the performance of our proposed algorithms with the approach of \cite{carter_arxiv_2025}. Section~\ref{sec:appendix_applied} provides additional empirical results, including supplementary analyses of the prostate cancer RNA-seq data and a non-hub simulation study based on a covariance structure estimated from the same gene dataset.
  Section~\ref{appendix:algs} collects detailed pseudocode for the algorithms developed in the paper.
 Section~\ref{appendix_D_optim} provides theoretical and empirical justification for the diagonal Hessian approximation used in the optimization over $D$ in Section~\ref{sec:algD}. 
Finally, Section~\ref{sec:proofs} contains the proofs of all theoretical results.

\subsection{Notation}\label{sec:notation}
Fix $p\in\mathbb{N}$. Denote by $\Sym$ the set of symmetric $p\times p$ matrices,  $\Szero\subset\Sym$ consists of symmetric matrices with zero diagonal, and by $\Diag$ the set of $p\times p$ diagonal matrices. Let $\Sppone$ be the collection of positive definite matrices with unit diagonal, and $\Diagp$ the set of diagonal matrices with strictly positive diagonal entries. 

Let $\odot$ denote the Hadamard (entry-wise) product. For any $p\times p$ matrix $X$, define $\diag{X}=X\odot I_p$, which is the diagonal matrix whose entries are the diagonal elements of $X$, and $\odiag{X}=X-\diag{X}$. 
Let $e=(1,\ldots,1)^\top \in \R^p$ and define $J_p = e  e^\top$, which is the $p\times p$ matrix with all entries equal to $1$. Moreover, set $J_p' = J_p - I_p = \odiag{J_p}$.  

For a function $f\colon \Omega\to \R$, define $\Argmin_{x\in\Omega} \{ f(x) \} = \{ x\in\Omega \colon f(x)\leq f(y)\,\mbox{for all } y\in\Omega\}$. 
In particular, we write $\hat{x} = \argmin_{x\in\Omega} \{f(x)\}$ if the minimizer is unique. 

We define two norms on $\R^{p\times p}$ by
\begin{align*}
    \|A\|_\infty = \max_{i,j} |a_{ij}|\quad\mbox{and}\quad
\opnorm{A} = \max_{i=1,\ldots,p} \sum_{j=1}^p | A_{ij}|.     
\end{align*}
Note that $\opnorm{\cdot}$ is the operator norm induced by the $\ell_\infty$ vector norm on $\R^p$. 

\section{Algorithm}\label{sec:alg}
We present an optimization framework for estimating the regularized precision matrix model defined by \eqref{eq:main_problem}. 
Our approach combines coordinate descent with specialized convex optimization techniques, as detailed in the following subsections. 

While the problem \eqref{eq:main_problem} is not globally convex, it is biconvex (see Lemma \ref{lem:biconvex} in Section \ref{sec:conv}). Therefore, we employ a coordinate descent approach, alternating between:
\begin{enumerate}
  \item Optimizing in  \(D\) holding \(R\) fixed.
  \item Optimizing in  \(R\) holding \(D\) fixed.
\end{enumerate}
While such an alternating algorithm was proposed in \cite{Carter}, details for solving the individual subproblems were not provided, and a different numerical approach was ultimately implemented. Such details were later provided in \cite{carter_arxiv_2025}.  

We take advantage of the fact that optimization in $D$ is related to the classical problem of scaling positive definite matrices, first studied by \cite{MO68}. The algorithm for updating $R$ is a modification of the GLASSO algorithm \cite{friedman2008sparse}.

\subsection{Optimization in \texorpdfstring{$D$ given $R$}{D given R}}\label{sec:algD}

We note that all terms involving $D$ in \eqref{eq:main_problem} can be written as
\[
\tr(CDRD) -2(1-\alpha)\log\det(D)= d^\top (R\odot C) d - 2(1-\alpha)\sum_{i=1}^p \log(d_i),
\]
where $d=(D_{ii})_{i=1}^p\in(0,\infty)^p$ and $\odot$ denotes the Hadamard product. Thus, minimization in $D$ is equivalent to minimizing the function $f(d)= \tfrac12 d^\top A d -\sum_{i=1}^p\log(d_i)$, where $A = (R\odot C)/(1-\alpha)$ is positive definite (see Lemma \ref{lem:A>0}). The unique stationary point $d$ of this logarithmic barrier function is characterized by the vector equation $A\, d = d^{-1}$,
where $d^{-1} = (1/d_{i})_{i=1}^p$ is the component-wise inverse of $d$. 
This system can be equivalently written in the form
\begin{align}\label{eq:scaling}
D A D e = e,\qquad \mbox{where } e=(1,\ldots,1)^\top\in\R^p.
\end{align}
The problem of finding a solution to \eqref{eq:scaling} for a given positive definite matrix $A$ was considered by \cite{MO68}. When $A$ has nonnegative entries, such a problem originally arose in estimating the transition matrix of a Markov chain known to be doubly stochastic; see  \cite{Sin64}.

Building on the results of \cite{Kal92}, we prove the following result:
\begin{thm}\label{thm:Dregion}
For any $R\in\Sppone$, correlation matrix $C$ and $\alpha<1$, \eqref{eq:scaling} has a unique solution $D\in\Diagp$. 

Moreover, if $C$ is positive definite, then all diagonal entries of $D$ belong to the interval
\[
\left[ \frac{\sqrt{(1-\alpha)\lambda_{\min}(C)}}{p} ,\sqrt{\frac{p (1-\alpha)}{\lambda_{\min}(C)}}\right].
\]
\end{thm}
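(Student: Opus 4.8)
\emph{Proof plan.} Throughout set $A=(R\odot\hat C)/(1-\alpha)$, which is positive definite by Lemma~\ref{lem:A>0}; note that this uses only that $\hat C\succeq 0$ and has unit diagonal, so the hypothesis $\hat C\succ 0$ is \emph{not} needed for the first claim. Since \eqref{eq:scaling} is precisely the stationarity equation $\nabla f(d)=Ad-d^{-1}=0$ of
\[
f(d)=\tfrac12\, d^\top A d-\sum_{i=1}^p\log d_i
\]
on the open orthant $(0,\infty)^p$, the plan for existence and uniqueness is to show that $f$ has a unique critical point there. First I would observe that $f$ is strictly convex, its Hessian being $A$ plus a diagonal matrix with strictly positive entries. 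Then I would check coercivity, i.e.\ $f(d)\to+\infty$ whenever $d$ leaves every compact subset of $(0,\infty)^p$: approach to the boundary is controlled by the barrier term $-\log d_i\to+\infty$ (all other terms staying bounded below), while the regime $\|d\|_2\to\infty$ is controlled by $f(d)\ge\tfrac12\lambda_{\min}(A)\|d\|_2^2-p\log\|d\|_2$. Compact sublevel sets together with strict convexity then yield a unique interior minimizer, which is the unique critical point, hence the unique $D\in\Diagp$ solving \eqref{eq:scaling}. (Alternatively, one may simply invoke the classical matrix--scaling theorems of \cite{MO68,Kal92}.)

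For the quantitative bounds, assume now that $\hat C$ is positive definite and write $d=(D_{ii})_{i=1}^p$. The single identity that drives everything is obtained by pairing $Ad=d^{-1}$ with $d$:
\[
d^\top A d=\sum_{i=1}^p d_i\cdot\frac1{d_i}=p .
\]
I would supplement this with two facts about $A$: its diagonal is $A_{ii}=R_{ii}\hat C_{ii}/(1-\alpha)=1/(1-\alpha)$; and, by the Schur--Hadamard eigenvalue inequality, $\lambda_{\min}(R\odot\hat C)\ge\lambda_{\min}(\hat C)\,\min_i R_{ii}=\lambda_{\min}(\hat C)$, whence $\lambda_{\min}(A)\ge\lambda_{\min}(\hat C)/(1-\alpha)$.

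The upper bound is then immediate, since for every $i$,
\[
D_{ii}^2\le\|d\|_2^2\le\frac{d^\top A d}{\lambda_{\min}(A)}=\frac{p}{\lambda_{\min}(A)}\le\frac{p(1-\alpha)}{\lambda_{\min}(\hat C)}.
\]
For the lower bound I would fix $i$ and apply Cauchy--Schwarz: for $A\succeq 0$ one has $(Ad)_i\le\sqrt{A_{ii}}\,\sqrt{d^\top A d}$ (write $(Ad)_i$ as the inner product of the $i$-th row of $A^{1/2}$ with $A^{1/2}d$ and note that the squared norm of that row is $A_{ii}$). Hence
\[
\frac1{D_{ii}}=(Ad)_i\le\sqrt{A_{ii}\,p}=\sqrt{\frac{p}{1-\alpha}},
\]
so $D_{ii}\ge\sqrt{(1-\alpha)/p}$. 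Since $\tr(\hat C)=p$ forces $\lambda_{\min}(\hat C)\le 1$, this already implies $D_{ii}\ge\sqrt{(1-\alpha)\lambda_{\min}(\hat C)}/p$, the claimed bound.

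The genuinely routine parts are the convexity/coercivity bookkeeping in the first claim and the $(1-\alpha)$ algebra in the second. The one step needing a named tool is the Schur--Hadamard bound $\lambda_{\min}(M\odot N)\ge\lambda_{\min}(N)\min_i M_{ii}$ for $M,N\succeq 0$, which I would either cite (e.g.\ Horn and Johnson) or prove in two lines from $x^\top(M\odot N)x=\tr\!\big(\Diag(x)\,M\,\Diag(x)\,N\big)$ together with $\tr(PQ)\ge\lambda_{\min}(Q)\tr(P)$ for $P\succeq 0$. The subtlety worth flagging is that in the first claim $\hat C$ is permitted to be singular, so positive definiteness of $A$ cannot come from $\hat C$ alone; it is exactly here that the unit--diagonal normalization, packaged in Lemma~\ref{lem:A>0}, is used.
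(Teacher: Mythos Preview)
Your argument is correct, and for existence/uniqueness and the upper bound it is essentially the paper's proof: both use Lemma~\ref{lem:A>0} to obtain $A\succ 0$, and both bound $\|d\|_2^2\le d^\top A d/\lambda_{\min}(A)=p/\lambda_{\min}(A)$ (the paper packages the identity $d^\top Ad=p$ via the citation to \cite{Kal92}, while you argue coercivity/strict convexity directly).

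The lower bound, however, takes a genuinely different and better route. The paper bounds $|A_{ij}|\le 1/(1-\alpha)$ entrywise, applies Cauchy--Schwarz to $\tfrac{1}{D_{ii}}=\sum_j D_{jj}A_{ij}\le\sqrt{\tr(D^2)\sum_j A_{ij}^2}$, and then reuses the upper bound on $\tr(D^2)$; this produces $D_{ii}\ge\sqrt{(1-\alpha)\lambda_{\min}(\hat C)}/p$. Your Cauchy--Schwarz via the PSD factorization, $(Ad)_i\le\sqrt{A_{ii}}\sqrt{d^\top A d}=\sqrt{p/(1-\alpha)}$, yields $D_{ii}\ge\sqrt{(1-\alpha)/p}$, which is sharper by a factor $\sqrt{p/\lambda_{\min}(\hat C)}$ and, notably, does not even require $\hat C\succ 0$. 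The paper's approach has the minor advantage that the resulting interval is expressed symmetrically in $\lambda_{\min}(\hat C)$, but your bound is strictly stronger and the argument cleaner.
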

This theorem underpins our uniqueness and consistency results. Indeed, it implies that if $C\in\Spp$, then it is enough to consider $D$ in \eqref{eq:main_problem} to belong to a compact subset defined above. Note that the non-convexity of \eqref{eq:main_problem} comes mainly due to large values of the diagonal $D$.

In \cite{Kal92},  the Newton-Raphson method is used to solve \eqref{eq:scaling}. Let $d_n=D_ne\in\R^p$.  The $n$-th iteration is given by
\begin{align}\label{eq:NR}
d_n = d_{n-1}+ H_n^{-1} (d_{n-1}^{-1}-A\, d_{n-1}),
\end{align}
where $H_n = (D_{n-1}^{-2}+A)$ is the Hessian of the objective function $f$ evaluated at $d_{n-1}$. Once a good initialization is found (within 
$O(p^{1/2+\varepsilon})$ iterations), the optimal solution to tolerance $\tau$ is obtained in $O(\log(1/\tau))$ additional iterations \cite{Kal92}. However, each iteration requires solving the linear system $H_n \delta_n = d_{n-1}^{-1}-Ad_{n-1}$ for the Newton direction $\delta_n=d_n-d_{n-1}$, which has a computational cost of $O(p^3)$. To reduce this cost, we approximate the Hessian with its diagonal part:
\[
H_n \approx  D_{n-1}^{-2}+\diag{A},
\]
reducing the per-iteration cost to $O(p^2)$. Justification for this diagonal approximation is provided in Appendix \ref{appendix_D_optim}. 
To guarantee convergence, we use the Line Search Algorithm that ensures the Wolfe conditions for $0 < c_1 = 10^{-4} < c_2 = 0.9 < 1$. We present the pseudocode describing the algorithm in the Appendix  \ref{sec:optD}.

\subsection{Optimization in \texorpdfstring{$R$ given $D$}{R given D}}\label{sec:algR}

The $R$-update solves the GLASSO convex problem over the
unit-diagonal cone $\Sppone$, 
\begin{align*}
\hat{R} \in \argmax_{R\in\Sppone}
\Big\{ \log\det(R) - \tr(RS) - \lambda\|R\|_{1,\mathrm{off}} \Big\},
\end{align*}
where $S$ is a positive semidefinite matrix.
We consider two alternative coordinate descent solvers.

\textbf{Primal coordinate descent in $R$.}
Algorithm~\ref{alg:updateR} updates $R$ directly; we refer to this implementation as \texttt{pcglassoFast\_Primal}.
It cycles over columns $i=1,\dots,p$; in the $i$th step, it freezes the principal block
$R_{-i,-i}$ and updates the off-diagonal vector $r=R_{-i,i}$.
The inner update of $r$ is performed element-wise (cycling $j=1,\dots,p-1$) using the closed-form
coordinate maximizer from Theorem~\ref{thm:element_update}; see Algorithm~\ref{alg:updater}.

\textbf{Dual coordinate descent in $W=R^{-1}$.}
Algorithm~\ref{alg:R} solves the dual problem; we refer to this implementation as \texttt{pcglassoFast\_Dual}. From Lemma~\ref{lem:dual},
\[
\hat{R}^{-1}=\argmax_{W\in\Spp}
\left\{\log\det(W)-\tr(W):\ |W_{ij}-S_{ij}|\le\lambda\ \ \forall i\neq j\right\}.
\]
This mirrors the classical GLASSO dual with the key difference that the unit-diagonal constraint $\diag{R}=I_p$ introduces the additional
$-\tr(W)$ term. As in \cite{Banerjee,friedman2008sparse}, updating a single column/row of $W$ can be reduced
to a LASSO regression, solved efficiently by coordinate descent with soft-thresholding. The resulting routine
is a minor adaptation of \textsc{glassoFast} \cite{glassoFAST}.

We compare both algorithms in Appendix~\ref{app:comparison_carter}, together with reference implementation \cite{carter_arxiv_2025}. Our experiments indicate that the dual coordinate descent algorithm performs best in most considered settings, while for the \texttt{hub\_1} structure no uniform ordering holds and the relative performance depends strongly on $(\lambda,\alpha)$ and the initialization.

\section{Theoretical properties of the estimator}\label{sec:prop}

\subsection{Convexity issues}\label{sec:conv}

A function $f\colon \mathcal{R}\times\mathcal{D}\to \R$ is called biconvex if, for every fixed $R\in\mathcal{R}$, the map $D\mapsto f(R,D)$ is convex, and for every fixed $D\in\mathcal{D}$, the map $R\mapsto f(R,D)$ is convex. If these maps are strictly convex in each argument, we say $f$ is strictly biconvex. A thorough introduction to biconvex functions can be found in \cite{biconvex}. 

As noted in \cite[Proposition 4]{Carter}, the objective is convex in $R$; the next lemma strengthens this to strict biconvexity and clarifies that global convexity holds only for $C=I_p$. 
\begin{lemma}\label{lem:biconvex}
    The objective function in \eqref{eq:main_problem} is strictly biconvex, but not globally convex unless $C=I_p$. 
\end{lemma}

Biconvexity does not imply global convexity. As a result, biconvex problems can admit multiple local minima, and standard global convexity guarantees (such as a unique global minimum) fail to apply in general.

In Section \ref{sec:alg} we proposed a coordinate descent algorithm for solving \eqref{eq:main_problem}. 
The algorithm stops at a coordinate-wise minimizer (also called a partial optimum in \cite{biconvex}) for the objective function $f$ in \eqref{eq:main_problem}, i.e., at a point $(\hat{R},\hat{D})$ such that for every $R$ and $D$,
\[
f(\hat{R},D)\geq f(\hat{R},\hat{D}) \leq f(R,\hat{D}).
\] 
However, it is well known in biconvex optimization that a coordinate-wise minimizer need not be a local minimum when both variables are perturbed simultaneously.
Each coordinate-wise minimizer corresponds to a critical point of the objective function \cite[Corollary 4.3]{biconvex}.

\begin{lemma}\label{lem:charact}
Any coordinate-wise minimizer $(\hat{R},\hat{D})$ of the objective \eqref{eq:main_problem} 
is defined by 
\begin{align}\label{eq:R-DCD}
\hat{R}^{-1}-\hat{D} C \hat{D} =  \lambda\Pi + \alpha I_p  - \lambda\,\diag{J_p' |\hat{R}|},
\end{align}
where $\Pi\in\partial\|\hat{R}\|_{1,\mathrm{off}}$ and $|\hat{R}|=(|\hat{R}_{ij}|)_{ij}$.
\end{lemma}

\begin{fact}
     The problem \eqref{eq:main_problem} may admit multiple minimizers.
\end{fact}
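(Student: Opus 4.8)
The claim is that problem \eqref{eq:main_problem} may admit more than one minimizer, so the plan is simply to exhibit a concrete instance. By scale invariance we are free to work directly with a correlation matrix $\hat{C}$, and by Lemma \ref{lem:biconvex} the objective is not globally convex whenever $\hat{C}\neq I_p$, so we should look for $\hat{C}$ that is in some sense ``maximally non-identity'' together with a regime of $\lambda$ and $\alpha$ that exposes the resulting non-convexity. The cleanest candidate is the exchangeable (equicorrelated) correlation matrix $\hat{C} = (1-\rho) I_p + \rho J_p$ for $\rho$ close to the boundary of positive definiteness, i.e. $\rho$ near $-\tfrac1{p-1}$ or near $1$; in either case $\lambda_{\min}(\hat{C})$ is small, the bounds on $D$ from Theorem \ref{thm:Dregion} are wide, and the mixed term $\tr(\hat{C}DRD)$ is strongly coupled.

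The key steps I would carry out are as follows. First, restrict attention to the symmetric subfamily $R = (1-r)I_p + r(J_p - I_p)$ with $r$ of the same sign pattern as forced by $\hat{C}$, and $D = d\, I_p$ a scalar multiple of the identity; by the symmetry of the objective under simultaneous permutation of coordinates, any minimizer can be symmetrized, so it suffices to study the two-variable reduced objective $g(r,d)$ obtained by plugging these forms into \eqref{eq:main_problem}. Second, compute $g(r,d)$ explicitly: $-\log\det R = -(p-1)\log(1-r) - \log(1+(p-1)r)$, $\log\det D = p\log d$, $\tr(\hat{C}DRD) = d^2\,\tr(\hat{C}R) = d^2\big(p(1-r)(1-\rho) + \text{(cross terms in }\rho r)\big)$, and $\|R\|_{1,\mathrm{off}} = p(p-1)|r|$. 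Third, minimize out $d$ for fixed $r$ using the stationarity condition from Section \ref{sec:algD} (which gives $d^2$ in closed form as $ (1-\alpha)p / \tr(\hat{C}R)$), obtaining a one-dimensional profiled objective $\tilde{g}(r)$. Fourth, analyze $\tilde{g}$ on the admissible interval for $r$: show that for suitable $\rho$ near the boundary and a suitable range of $\lambda$ (and $\alpha$, e.g. $\alpha=0$), $\tilde{g}$ has \emph{two distinct global minimizers} — one near $r=0$ (the ``sparse'' solution, active because of the $\ell_1$ kink at $r=0$) and one with $r$ bounded away from $0$ (the ``dense'' solution, favored by the likelihood when $\hat{C}$ is far from $I_p$) — by exhibiting $\lambda$ at which the two local minima have exactly equal value (an intermediate-value argument on $\lambda\mapsto \tilde g(r_{\mathrm{dense}}(\lambda)) - \tilde g(0)$, which is continuous, negative for small $\lambda$ and positive for large $\lambda$). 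Finally, lift back: by the permutation-symmetrization argument, these two reduced minimizers are genuinely distinct minimizers of the full problem \eqref{eq:main_problem}, establishing the Fact. Alternatively — and this is probably the shortest route — it may be cleaner to just fix small explicit numbers ($p=2$ or $p=3$, a specific $\rho$, a specific $\lambda$) and verify the two-minimizer phenomenon numerically/by direct computation, since the statement only asserts existence.

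The main obstacle is the fourth step: producing a value of $\lambda$ at which two \emph{global} (not merely local) minima coexist. For a single fixed $\lambda$ one generically has a unique global minimum, so one must tune $\lambda$ to the crossover point, and one must rule out the possibility that the profiled objective $\tilde g$ is actually unimodal on the admissible interval (which it will be for $\rho$ too close to $0$, consistent with the uniqueness results promised for $\hat C$ near $I_p$). Making the crossover argument rigorous requires either (i) a careful second-derivative / sign analysis of $\tilde g$ showing it has exactly one interior local max separating two local minima for the chosen $\rho$, or (ii) falling back on an explicit low-dimensional numerical certificate. Given that the surrounding text (the uniqueness results ``when $\lambda$ is small'' and ``when $\hat C$ is close to identity'') already signals that multiplicity is a boundary phenomenon, I expect the paper to take route (ii): pin down a small explicit example and simply display the two minimizers.
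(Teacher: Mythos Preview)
Your fallback route---fix $p=2$, a specific $\rho$, a specific $\lambda$, and display the two minimizers---is exactly what the paper does: it takes $p=2$, $\alpha=0$, $\lambda=1$, and $\rho=\hat C_{12}\approx 0.91$ (pinned down by a transcendental equation in an auxiliary variable $r_0\approx -0.85$), and exhibits the two global minima $(\hat R,\hat D)=(I_2,I_2)$ and $(\hat R,\hat D)=\bigl(\begin{smallmatrix}1&r_0\\r_0&1\end{smallmatrix}\bigr,\, d\,I_2\bigr)$ with $d=(1+r_0\rho)^{-1/2}$. Your prediction that the paper takes route (ii) is spot on, and your profiled one-variable analysis together with the intermediate-value argument in $\lambda$ is precisely how one locates the crossover value.

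One genuine gap in your general-$p$ sketch: the sentence ``any minimizer can be symmetrized, so it suffices to study the reduced objective'' is not valid for a non-convex permutation-invariant objective (symmetry-breaking minima are the rule rather than the exception), and in any case it is the wrong direction---what you need is that a minimizer of the \emph{restricted} symmetric problem is a minimizer of the \emph{full} problem, which requires checking second-order conditions transverse to the symmetric subspace. For $p=2$ this issue disappears: the only restriction is $D_{11}=D_{22}$, and by Theorem~\ref{thm:Dregion} the $D$-minimizer is unique, hence symmetric whenever $\hat C$ and $R$ are, so any symmetric coordinate-wise minimizer in $R$ automatically has the correct $\hat D$ and one is left with a one-dimensional check. (Minor typo: your parametrization $R=(1-r)I_p+r(J_p-I_p)$ has diagonal $1-r$, not $1$; you want $R=I_p+rJ_p'$.)
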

We illustrate this with a simple $2\times 2$ example. 
 \begin{example}\label{ex:2on2}
Consider $p=2$ with $\alpha=0$ and $\lambda=1$, and choose $\rho=C_{12}=\frac{e^{r_0}\sqrt{1-r_0^2}-1}{r_0}\approx 0.91$, where $r_0\approx -0.85$ is the unique negative solution to 
$\sqrt{1-r_0^2} = e^{r_0} (1-r_0+r_0^3)$. Let $d=(1+r_0 \rho)^{-1/2}$. 
Then the objective in \eqref{eq:main_problem} has two global minima: at $(\hat{R},\hat{D})=(I_2,I_2)$ and at 
\(
(\hat{R},\hat{D})=\left( \begin{pmatrix}
    1 & r_0 \\ r_0 & 1
\end{pmatrix}, \begin{pmatrix} d & 0 \\ 0 & d \end{pmatrix}
\right).
\)

Furthermore, if we vary $\lambda$,  one can show that \eqref{eq:main_problem} has:
\begin{itemize}
\item unique global minimum for $\lambda\in[0,\rho)$,
\item two local minima for $\lambda \in[\rho, 1.168]$,
\item unique global minimum at $(R,D)=(I_2,I_2)$ for $\lambda> 1.168$.
\end{itemize} 
More generally, we can show that in the case $p=2$ with $\alpha=0$, problem \eqref{eq:main_problem} has a unique solution for all $\lambda \geq 0$ if and only if $|\rho|\leq \frac{\sqrt{3+2\sqrt{3}}}{3}\approx 0.85$.
 \end{example}

Even though multiple solutions may exist, the following consistency result states that they are not far from each other.
\begin{lemma}\label{lem:cons}
If $C$ is positive definite, then each coordinate-wise minimizer $\hat{K}$ of \eqref{eq:main_problem} satisfies the following bound:
\[
\| \hat{K}^{-1}-C\|_\infty  \leq  \frac{(\lambda p +|\alpha|)p^2}{(1-\alpha)\lambda_{\min}(C)}.
\]
\end{lemma}

\begin{remark}\label{rem:localI}
\begin{enumerate}
    \item We note that if $\|C-I_p\|_{\infty}\leq\frac{\lambda}{1-\alpha}$, then $(\hat{R},\hat{D}) = (I_p,\sqrt{1-\alpha}I_p)$ is a local minimum of \eqref{eq:main_problem}. Indeed, it is easy to verify that \eqref{eq:R-DCD} holds in such a case.
    \item Since $\diag{\hat{D}C\hat{D}} = \hat{D}\,\diag{C}\hat{D}=\hat{D}^2$, by \eqref{eq:R-DCD}, we obtain $\hat{D} = d(\hat{R})$, where
\begin{align}\label{eq:Dexplicit}
d(R)^2 = \lambda \,\diag{J_p' |R|} + \diag{R^{-1}}-\alpha I_p.
\end{align}
Thus, very surprisingly, $\hat{D}$ is expressed as an explicit function of $\hat{R}$, even though the minimizer in $D$ does not offer an explicit formulation beyond the $p=2$ case. 

 We note that it is natural to substitute the optimization in $D$ (which is based on solving \eqref{eq:scaling}) by  \eqref{eq:Dexplicit}. However, our numerical simulations show that the benefit of a faster update of $D$ is offset by the increased number of steps in the main coordinate descent iteration. Moreover, since the update \eqref{eq:Dexplicit} is not optimal, ensuring the algorithm's theoretical convergence would require us to know that it does not increase the loss function - and that does not seem easy to prove.

Substituting $D=d(R)$ into \eqref{eq:scaling} shows that $\hat{R}$ lies on a smooth manifold described by a system of $p$ equations in $p(p-1)/2$ variables $(R_{ij})_{i>j}$, 
\[
d(R) (R\odot C) d(R) e = (1-\alpha)e
\]
in contrast to the non‑smooth constraint \eqref{eq:R-DCD}. It would be interesting to exploit this observation by reformulating the original problem \eqref{eq:main_problem} as a manifold-constrained programme.
\end{enumerate}

\end{remark}

\subsubsection{Uniqueness of the solution}\label{sec:unique}

In the Example \ref{ex:2on2}, we saw that \eqref{eq:main_problem} has a unique solution in two scenarios:  small $\lambda$ or small correlations.
Below, we generalize these observations to arbitrary dimensions.

\begin{thm}\label{thm:unique}\ 
\begin{enumerate}
    \item[(i)] 
    If $\|C-I_p\|_{\infty} \leq (2(1-\alpha) p^3)^{-1/2}$, 
    then for any $\lambda\geq 0$, \eqref{eq:main_problem} admits a unique local minimum. 
    \item[(ii)] For any $C\in\Sppone$, there exist $\lambda_0>0$ and $\alpha_0>0$ such that, for every $\lambda\in(0,\lambda_0)$ and $\alpha\in(-\infty,\alpha_0)$, \eqref{eq:main_problem} admits a unique local minimum.
    \end{enumerate}
\end{thm}

\subsection{Low dimensional asymptotics and sign recovery}\label{sec:low_dim}

In this subsection, we consider the classical asymptotic regime with $p$ fixed and let $n\to\infty$. Recall the setup in which we observe $n$ independent copies $X^{(1)},\dots, X^{(n)}$ of a centered random vector $X=(X_1,\dots,X_p)^\top\in\mathbb{R}^p$ with covariance matrix $\Sigma^\ast=(K^\ast)^{-1}$. Throughout, we shall assume that the fourth moments $\mathbb{E}[X_j^4]<\infty$ exist for every $j\in\{1,\dots,p\}$.
Suppose that $\lambda_n=\gamma n^{-1/2}$ and $\alpha_n=o(n^{-1/2})$ for fixed $\gamma>0$. Then, by Theorem \ref{thm:unique} the PCGLASSO estimator is unique for sufficiently large $n$ and by Lemma \ref{lem:cons} it is strongly consistent (since $\|C-C^\ast\|_\infty \to 0$ a.s.). 
We reformulate PCGLASSO optimization problem in a way consistent with the general asymptotic results obtained in \cite{hejny2025asymptotic}. We assume that
\begin{align}\label{eq:tuning}
\lambda_n = \gamma\,n^{-1/2}\quad\mbox{ and } \quad\alpha_n = o(n^{-1/2})
\end{align}
Then, the PCGLASSO estimator \eqref{eq:main_problem_s} can be written in the form 
\begin{align*}
       \hat{K}_n&= \argmin_{K\in \Spp}\left\{ n^{-1}\sum_{i=1}^n\ell(X^{(i)},K)+n^{-1/2}\gamma\operatorname{Pen}_n(K)\right\},
\end{align*}
where $\ell(X,K)=-\log\det(K)+\tr(KXX^\top)$ is the negative log-likelihood of the Gaussian model, and $\operatorname{Pen}_n(K) = \| P(K)\|_{1,\mathrm{off}} + o(1) \log\det(\diag{K})$.

We shall also define
\[
f'(K;U)=\lim_{\varepsilon\to 0}\frac{1}{\varepsilon}\left(f(K+\varepsilon U)-f(K)\right),
\]
the directional derivative of $f$ at $K$ in direction $U\in\Sym$. 

Using the results of \cite{hejny2025asymptotic}, we have the following.
\begin{thm}\label{thm:conv_in_dist}
Assume that $X$ has a finite fourth moment. The error $\sqrt{n}(\hat{K}_n-K^\ast)$ converges in distribution to the random variable $\hat{U}$, defined as the minimizer of
\begin{align}\label{vec(U) objective}
\hat{U}=\argmin_{U\in\Sym}\left\{ \frac{1}{2}\operatorname{vec}(U)^\top \Gamma^\ast\, \operatorname{vec}(U) - W^\top \operatorname{vec}(U) + \gamma \operatorname{Pen}'(K^\ast; U)\right\},
\end{align}
where $\operatorname{Pen}(K) = \| P(K)\|_{1,\mathrm{off}}$, $\Gamma^\ast=\Sigma^\ast\otimes\Sigma^\ast$, $W\sim\mathcal{N}_{p^2}(0,C_{\triangle})$, $C_{\triangle}= \operatorname{Cov}( \operatorname{vec}(XX^\top))$.  Moreover, \begin{align*}
\lim_{n\to\infty}\mathbb{P}\left(\sign(\sqrt{n}(\hat{K}_n-K^\ast))=\mathcal{S}\right)=\mathbb{P}\left(\sign(\hat{U})=\mathcal{S}\right),
\end{align*}
for every sign pattern $\mathcal{S}\in\{\sign(U)\colon U\in \Sym\}$.
\end{thm}

\subsection{Sign recovery}\label{sec:recovery}
For $X\in \R^{p\times p}$, define the vectorization operator $\v{X}\in\R^{p^2}$ obtained by stacking the columns of $X$ into a single column vector. Let $\mathrm{P}_{\mathrm{diag}}$ be the orthogonal projection matrix satisfying $\mathrm{P}_{\mathrm{diag}} \v{X}=\v{\diag{X}}$ for all $X\in\R^{p\times p}$. Denote $\mathrm{P}_{\mathrm{diag}}^\bot = I_{p^2}-\mathrm{P}_{\mathrm{diag}}$. 

\begin{defin}
We decompose the true precision matrix as 
$K^\ast=D^\ast R^\ast D^\ast$.
Let
  \[
    \tilde{\Gamma}=\mathrm{P}_{\mathrm{diag}}^\bot((R^\ast)^{-1}\otimes(R^\ast)^{-1})+\frac12\mathrm{P}_{\mathrm{diag}}(((R^\ast)^{-1}\otimes I_p)+(I_p\otimes (R^\ast)^{-1}))
\]
and let $\Supp$ be the support of $K^\ast$ (equivalently the support of $R^\ast$), i.e., 
\[
\Supp = \{ (i,j)\in \{1,\ldots,p\}^2\colon K^\ast_{ij}\neq 0\}.
\]
    The irrepresentability condition for PCGLASSO is given by
\begin{align}\label{new irrepresentability condition}
  \mathrm{IRR}_{\mathrm{PCG}}(K^\ast) = \| \tilde{\Gamma}_{\Supp^c\Supp} (\tilde{\Gamma}_{\Supp\Supp})^{-1}\v{\Pi}_{\Supp}\|_{\infty}<1,
\end{align}
where $\Pi_{ij} = \sign(K^\ast_{ij})$ if $i\neq j$ and $\Pi_{ii}=0$. 
\end{defin}
Note that the scale invariance of the PCGLASSO method implies the scale invariance of the irrepresentability condition, which is manifested by its lack of dependence on the $D^\ast$ matrix.

We are now ready to present the main result in this section. It establishes model selection consistency for the PCGLASSO estimator under the irrepresentability condition. 

\begin{thm}\label{thm:pattern_recovery} 
Assume that \eqref{eq:tuning} holds with $\gamma>0$ and let $\hat{K}_n$ denote the solution to \eqref{eq:main_problem_s} with $(\lambda,\alpha)=(\lambda_n,\alpha_n)$. Under the irrepresentability condition \eqref{new irrepresentability condition}, there exists $c>0$, independent of $\gamma$, such that
\begin{align*}
     \lim_{n\rightarrow \infty}\mathbb{P}\left(\sign(\hat{K}_n)=\sign(K^\ast)\right)\geq 1-e^{-c\,\gamma^2}.
\end{align*}
Conversely, when $\mathrm{IRR}_{\mathrm{PCG}}(K^\ast)\geq 1$, the limiting probability is bounded from above by $1/2$.
\end{thm}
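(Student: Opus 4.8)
The plan is to deduce Theorem~\ref{thm:pattern_recovery} from the asymptotic distribution already established in Theorem~\ref{thm:conv_in_dist}, following the standard primal--dual (KKT) witness argument for Lasso-type sign recovery, adapted to the reparametrization $K=DRD$ and the partial-correlation penalty. Since Theorem~\ref{thm:conv_in_dist} gives
\[
\lim_{n\to\infty}\mathbb{P}\bigl(\sign(\sqrt n(\hat K_n-K^\ast))=\mathcal{S}\bigr)=\mathbb{P}\bigl(\sign(\hat U)=\mathcal{S}\bigr),
\]
and since $K^\ast$ has exact zeros off its support $S$, on the event $\{\sign(\hat U)=\sign(K^\ast) \text{ on } S \text{ and } \hat U_{ij}=0 \text{ off } S\}$ we get $\sign(\hat K_n)=\sign(K^\ast)$ for all large $n$ with the stated asymptotic probability. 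So the whole theorem reduces to a statement about the Gaussian optimization problem \eqref{vec(U) objective}: lower-bound $\mathbb{P}(\hat U_{S^c}=0,\ \operatorname{sign}(\hat U_S)=\operatorname{sign}(K^\ast_S))$ by $1-e^{-c\gamma^2}$ under $\mathrm{IRR}_{\mathrm{PCG}}<1$, and upper-bound the sign-recovery probability by $1/2$ when $\mathrm{IRR}_{\mathrm{PCG}}\ge 1$.

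For the positive part, first I would compute the directional derivative $\operatorname{Pen}'(K^\ast;U)$ appearing in \eqref{vec(U) objective}. Writing $U=D^\ast V D^\ast + $ (diagonal correction) and using $P(K)_{ij}=-K_{ij}/\sqrt{K_{ii}K_{jj}}$, the penalty $\|P(K)\|_{1,\mathrm{off}}$ is differentiable in the diagonal directions at $K^\ast$ (the partial correlations on $S^c$ vanish, so only the off-support coordinates contribute the non-smooth $\ell_1$ kink, and the chain rule shows the scaling factors $\sqrt{K^\ast_{ii}K^\ast_{jj}}$ enter), which is precisely the mechanism that makes the matrices $\tilde\Gamma$ and $\Pi$ in the Definition the "correlation-scale" analogues of the usual $\Gamma^\ast=\Sigma^\ast\otimes\Sigma^\ast$ and $\operatorname{sign}(K^\ast)$. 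I expect that after this reduction the quadratic form $\tfrac12\v U^\top\Gamma^\ast\v U - W^\top\v U$ plus the scaled penalty becomes, after the linear change of variables to $V=(D^\ast)^{-1}U(D^\ast)^{-1}$, exactly $\tfrac12\v V^\top\tilde\Gamma\v V - \tilde W^\top\v V + \gamma\|\odiag{V}\|_{\ell_1\text{-type}}$ with $\tilde W$ still Gaussian with a nondegenerate covariance. Then the classical primal--dual witness construction applies verbatim: define the oracle solution $\hat V^{\mathrm{or}}$ supported on $S$ by solving the restricted stationarity equation, check via the block structure of $\tilde\Gamma$ that the dual variable on $S^c$ has $\ell_\infty$-norm $\le \mathrm{IRR}_{\mathrm{PCG}}(K^\ast)\cdot 1 + (\text{Gaussian fluctuation})/\gamma$, and bound the probability that the fluctuation term exceeds $1-\mathrm{IRR}_{\mathrm{PCG}}(K^\ast)$ using a Gaussian tail bound $\mathbb{P}(\|Z\|_\infty> t)\le e^{-ct^2}$, which yields the $1-e^{-c\gamma^2}$ with $c$ depending only on $K^\ast$ (hence independent of $\gamma$). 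I also need the sign-consistency on $S$ itself, i.e. that the oracle estimate does not have a sign flip; since $K^\ast_S$ is fixed and nonzero while the perturbation is $O_p(\gamma)$ — no, rather it is $\sqrt n$-scaled and $O_p(1)$ — one argues that on the high-probability event the oracle solution lies in the correct orthant, which again fails only on a Gaussian tail event of probability $\le e^{-c\gamma^2}$ (possibly after adjusting $c$).

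For the converse ($\mathrm{IRR}_{\mathrm{PCG}}(K^\ast)\ge 1$), the plan is the standard necessity argument: on any event where $\operatorname{sign}(\hat U)=\operatorname{sign}(K^\ast)$, the KKT conditions of \eqref{vec(U) objective} force $\hat U_{S^c}=0$ and a dual feasibility constraint $\|\tilde\Gamma_{S^cS}(\tilde\Gamma_{SS})^{-1}(\v\Pi_S - \gamma^{-1}\tilde W_S) + \gamma^{-1}\tilde W_{S^c}\|_\infty\le 1$; when $\mathrm{IRR}_{\mathrm{PCG}}=1$ this is violated with probability $\ge 1/2$ by symmetry of the Gaussian $\tilde W$ (the deterministic part already saturates the bound in some coordinate, so the Gaussian pushes it over with probability exactly $1/2$), and when $\mathrm{IRR}_{\mathrm{PCG}}>1$ it is violated with probability tending to $1$. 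Passing to the limit via Theorem~\ref{thm:conv_in_dist} gives $\limsup_n \mathbb{P}(\operatorname{sign}(\hat K_n)=\operatorname{sign}(K^\ast))\le 1/2$.

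The main obstacle I anticipate is the first step of the positive part: correctly computing $\operatorname{Pen}'(K^\ast;U)$ for the partial-correlation penalty under the $K=DRD$ parametrization and verifying that the change of variables turns $(\Gamma^\ast,W,\operatorname{Pen}')$ into exactly $(\tilde\Gamma,\tilde W,\ell_1\text{-penalty})$ with the projection operators $\mathrm{P}_{\mathrm{diag}},\mathrm{P}_{\mathrm{diag}}^\bot$ accounting for the fact that the diagonal of $R$ is constrained to $1$ (so those directions are penalty-free and enter $\tilde\Gamma$ with the symmetrized factor $\tfrac12((R^\ast)^{-1}\otimes I + I\otimes(R^\ast)^{-1})$). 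Once that algebraic identification is in place, the remaining primal--dual witness steps and Gaussian tail estimates are routine, and the constant $c$ is easily seen to depend only on $\lambda_{\min}(\tilde\Gamma_{SS})$, $\|\tilde\Gamma_{S^cS}(\tilde\Gamma_{SS})^{-1}\|_\infty$, and $\min_{(i,j)\in S}|K^\ast_{ij}|$ — all functions of $K^\ast$ alone, hence independent of $\gamma$ as claimed.
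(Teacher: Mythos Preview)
Your overall strategy---reduce to the limiting Gaussian problem \eqref{vec(U) objective} via Theorem~\ref{thm:conv_in_dist}, then analyze the KKT conditions---is exactly what the paper does. But the key algebraic step you anticipate as ``the main obstacle'' does not work in the form you describe, and this is where the proposal has a genuine gap.

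You write that after the change of variables $V=(D^\ast)^{-1}U(D^\ast)^{-1}$ the objective becomes $\tfrac12\,\v{V}^\top\tilde\Gamma\,\v{V}-\tilde W^\top\v{V}+\gamma\|\odiag{V}\|_{1}$. This cannot be right: $\tilde\Gamma$ as defined in the paper is \emph{not symmetric} (the projection $\mathrm{P}_{\mathrm{diag}}^\bot$ sits only on the left of $(R^\ast)^{-1}\otimes(R^\ast)^{-1}$), so it is not the Hessian of any quadratic. What actually happens is that the transformed objective has the \emph{symmetric} Hessian $(R^\ast)^{-1}\otimes(R^\ast)^{-1}$, but the penalty is \emph{not} a plain $\ell_1$ on $V$: by the chain rule (Lemma~\ref{Lemma g - derivative}) the directional derivative $\operatorname{Pen}'(K^\ast;U)=f'(R^\ast;g'(K^\ast;U))$ produces the extra linear factor $M_{R^\ast}$ in the subgradient, so the KKT reads
\[
\tilde W \in (R^{\ast-1}\otimes R^{\ast-1})\,\v{V}+\gamma\,M_{R^\ast}\,\v{\partial f(R^\ast)}.
\]
The matrix $\tilde\Gamma=\tilde M_{R^\ast}^{-1}(R^{\ast-1}\otimes R^{\ast-1})$ appears only \emph{after} you left-multiply this inclusion by the non-symmetric $\tilde M_{R^\ast}^{-1}$ (Lemma~\ref{lem:MRtilde}), which simultaneously turns $M_{R^\ast}\,\v{\partial f(R^\ast)}$ back into the ordinary subdifferential $\v{\partial f(R^\ast)}$ (since $\mathrm P_{\mathrm{diag}}\,\v{\partial f(R^\ast)}=0$). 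Once you make this correction, your primal--dual witness argument on the $S$/$S^c$ blocks of $\tilde\Gamma$ goes through and yields exactly \eqref{new irrepresentability condition}; the paper phrases the same computation geometrically, via the cone condition $A+\mathrm{cone}(B)=V\Leftrightarrow A\cap\mathrm{ri}(B)\neq\emptyset$ (Lemma~\ref{lem:ri}), rather than writing out the oracle estimate explicitly. One further remark: your worry about sign consistency on $S$ is unnecessary---by consistency of $\hat K_n$, the signs on $S$ are eventually correct with probability one, so the limiting sign-recovery probability equals $\mathbb{P}(\hat U_{S^c}=0)$ exactly, not up to another $e^{-c\gamma^2}$ slack. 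For the converse, the paper avoids the delicate boundary case $\mathrm{IRR}_{\mathrm{PCG}}=1$ by noting that when the irrepresentability condition fails the recovery set $\mathcal K_\gamma$ lies in a closed half-space through the origin, so $\mathbb{P}(\tilde M_{R^\ast}^{-1}\tilde W\in\mathcal K_\gamma)\le 1/2$ by Gaussian symmetry.
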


\subsubsection{Comparison with GLASSO}\label{sec:recoveryGLASSO}

\cite{Carter} observed empirically that the PCGLASSO estimator, partly due to its scale invariance, possesses better sign recovery properties than the GLASSO estimator. This is a direct consequence of the irrepresentability condition for PCGLASSO being generally much weaker than the corresponding condition for the GLASSO, which we recall below. 

Let $\Gamma^\ast = \Sigma^\ast\otimes \Sigma^\ast$. Then, the GLASSO irrepresentability condition is 
\[
 \mathrm{IRR}_{\mathrm{GLASSO}}(K^\ast)= \| \Gamma^\ast_{\Supp^c\Supp} (\Gamma^\ast_{\Supp\Supp})^{-1} \mathrm{vec}(\Pi)_{\Supp} \|_\infty < 1,
\]
where the set $\Supp$ and the matrix $\Pi$ are the same as in \eqref{new irrepresentability condition}. The GLASSO irrepresentability condition is necessary for the sign recovery by the GLASSO estimator in the sense of Theorem \ref{thm:pattern_recovery}.

The main feature is that \eqref{new irrepresentability condition} depends only on the partial correlation matrix
$R^\ast$, making it inherently scale-invariant. In contrast, the GLASSO irrepresentability condition depends on the entire matrix $\Sigma^\ast$, and is therefore not scale-invariant.

\begin{example}
For the hub example, the irrepresentability condition is more favorable for PCGLASSO than for GLASSO. 
Consider the matrix $K^\ast$ representing a hub graph, defined by
\[
K^\ast_{11} = a,\quad K^\ast_{ii} = b \,\ (i\ge2),\quad K^\ast_{1i}=K^\ast_{i1}=c \,\ (i\ge2),\quad K^\ast_{ij}=0 \ \text{otherwise}.
\]
For PCGLASSO, the irrepresentability value can be shown to be:
\begin{align*}
    \mathrm{IRR}_{\mathrm{PCG}}(K^\ast) =  \frac{|c|}{\sqrt{a b}}\left(2-(p-1)\frac{c^2}{ab}\right). 
\end{align*}
Since the matrix $K^\ast$ is positive definite if and only if $c^2/(ab)<(p-1)^{-1}$, it can be easily verified that 
\[
\mathrm{IRR}_{\mathrm{PCG}}(K^\ast) \leq \frac{4\sqrt{2}}{3\sqrt{3}}\frac{1}{\sqrt{p-1}}= O(p^{-1/2}), 
\]
which implies that the PCGLASSO irrepresentability condition \eqref{new irrepresentability condition} is satisfied for all such matrices for $p\geq 3$. By contrast, the irrepresentability value for GLASSO is $\mathrm{IRR}_{\mathrm{GLASSO}}(K^\ast) = 2 |c|/b$, which implies that the GLASSO irrepresentability condition is very restrictive.

Figure \ref{Fig:IRR} displays the heatmaps of the values $\mathrm{IRR}_{\mathrm{GLASSO}}(K^\ast)$ (top, for GLASSO) and $\mathrm{IRR}_{\mathrm{PCG}}(K^\ast)$ (bottom, for PCGLASSO) for $b=1$ and $p=15$. 

\begin{figure}[h]
\centering {
\includegraphics[width=12cm]{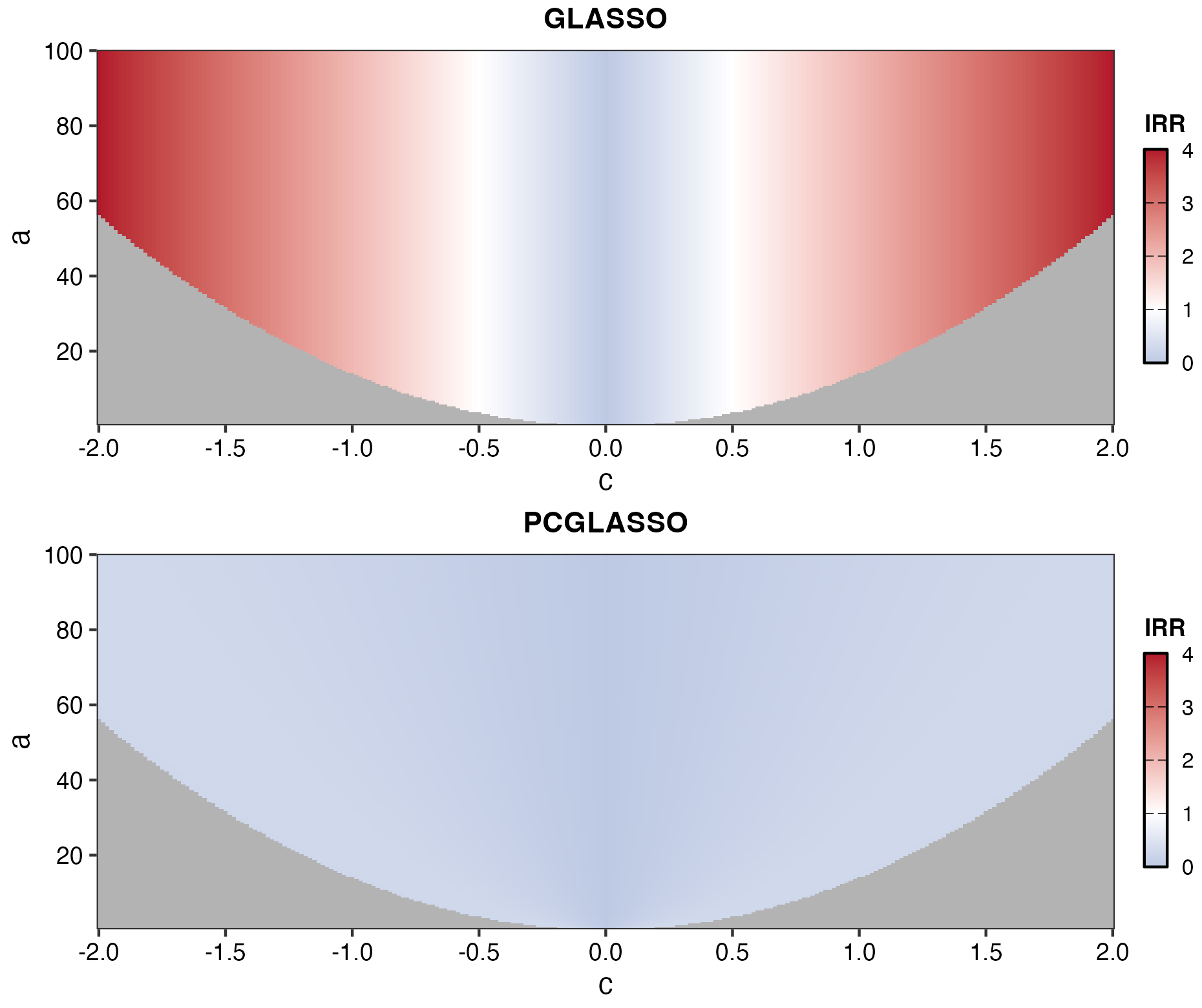}
}
\caption{Heatmaps of the $\mathrm{IRR}$ values for a hub graph on $p=15$ vertices. Top: GLASSO; bottom: PCGLASSO. The matrix is defined by $K^\ast_{1,1}=a$, $K^\ast_{i,i}=1$ for $i\ge2$, and $K^\ast_{1,i}=K^\ast_{i,1}=c$ (with all other entries zero). Green indicates regions where the IRR condition is satisfied (i.e., the value is below $1$), while gray marks regions where $K^\ast$ is not positive definite (i.e., $a \le (p-1)c^2$).\label{Fig:IRR}}
\end{figure}
\end{example}

The bottom heatmap is uniformly green, indicating that the \eqref{new irrepresentability condition} is satisfied for all tested values of $a$ and $c$.  In contrast, the top heatmap displays only a narrow green strip, revealing that the GLASSO condition is far more restrictive and holds only when the conditional dependence between the hub and spoke nodes is weak.

When applied to chain-graph models, PCGLASSO again surpasses GLASSO, but the advantage is considerably less pronounced than in the case of hub models.

\section{Real data analysis and simulations}\label{sec:num_exp}

\subsection{Gene Expression Omnibus}\label{sec:real_ex}
In this section, we compare different versions of GLASSO for identifying the graphical model behind the genome-wide gene expression data from lymphoblastoid cell lines of HapMap individuals, made publicly available by \cite{GSE6536} through the NCBI Gene Expression Omnibus (GEO accession: GSE6536). We used the data of 210 unrelated individuals from four distinct populations (60 Utah residents with ancestry from northern and western Europe, 45 Han Chinese in
Beijing, 45 Japanese in Tokyo, 60 Yoruba in Ibadan, Nigeria), which was previously studied, e.g., in \cite{bradic2011, fanfanbarut14, Wojtek, frommlet22}.  
The major goal of the analysis in \cite{frommlet22} was to identify genes whose expression levels can be used  to predict the expression level of the gene CCT8, which appears within the Down syndrome critical region on human chromosome 21. Such analyses can be used to identify genes whose expression is associated with (and may regulate) CCT8. In this work, we perform this task using the graphical model tools, which can provide additional information about structure of partial correlations among the selected genes.

The original dataset contains expression levels measured for 47\,293 probes. Following the procedure described in \cite{Wojtek}, we pre-processed the data by removing probes that met either of the following two criteria: (i) the maximum expression level across the 210 individuals was below the 25th percentile of all measured expression levels, or (ii) the range of expression levels across individuals was less than 2. After this filtering step, we retained
$p=3\,220$ probes.

We then applied LASSO to select 124 probes that best predict the expression of CCT8. One probe exhibiting an unusually high variance was removed as an outlier. Consequently, the final set of variables used to construct the graphical model includes CCT8 and the 123 LASSO-selected probes.

Figures~\ref{Fig:XXX} and~\ref{fig:matrices} compare the performance of PCGLASSO with two variants of GLASSO, as well as with the SPACE method of~\cite{SPACE09}, on this dataset. The Cor-GLASSO approach applies the standard GLASSO algorithm to standardized data (i.e., the sample correlation matrix), and subsequently retransforms the estimates back to the original scale.

Figure~\ref{Fig:XXX} presents the values of the Extended BIC criterion of \citet{Foygel} as a function of the number of edges, for graphs obtained along the solution paths of the considered methods. The pronounced differences between the EBIC curves reflect substantial structural discrepancies among these paths. In particular, the EBIC values along the PCGLASSO path are consistently lower than those obtained for the GLASSO variants and the SPACE method, indicating that PCGLASSO achieves superior likelihood maximization for models of a given size.

The comparison further demonstrates that applying GLASSO to standardized data (i.e., the correlation matrix) yields improved performance relative to its direct application to the raw gene expression data. Nevertheless, both GLASSO-based approaches and SPACE are markedly outperformed by PCGLASSO in terms of likelihood values across their respective solution paths.

\begin{figure}
\centering{
\includegraphics[width=0.7\textwidth]{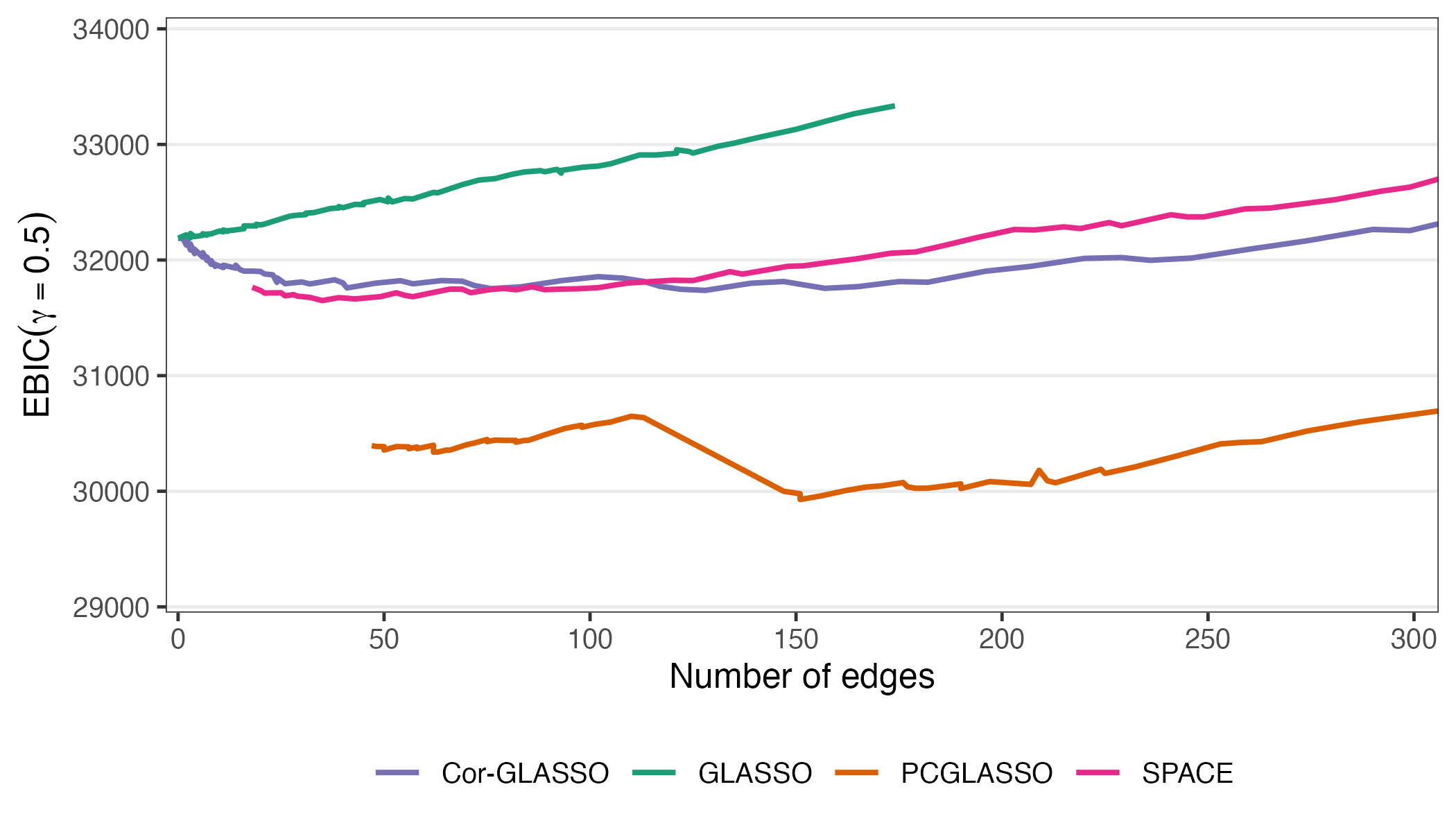}
}
	\caption{Values of EBIC along the paths of different methods.
    \label{Fig:XXX}}
\end{figure}

\begin{figure}[htbp]
    \centering
    \includegraphics[width=0.5\textwidth,trim=100 0 100 0,clip]{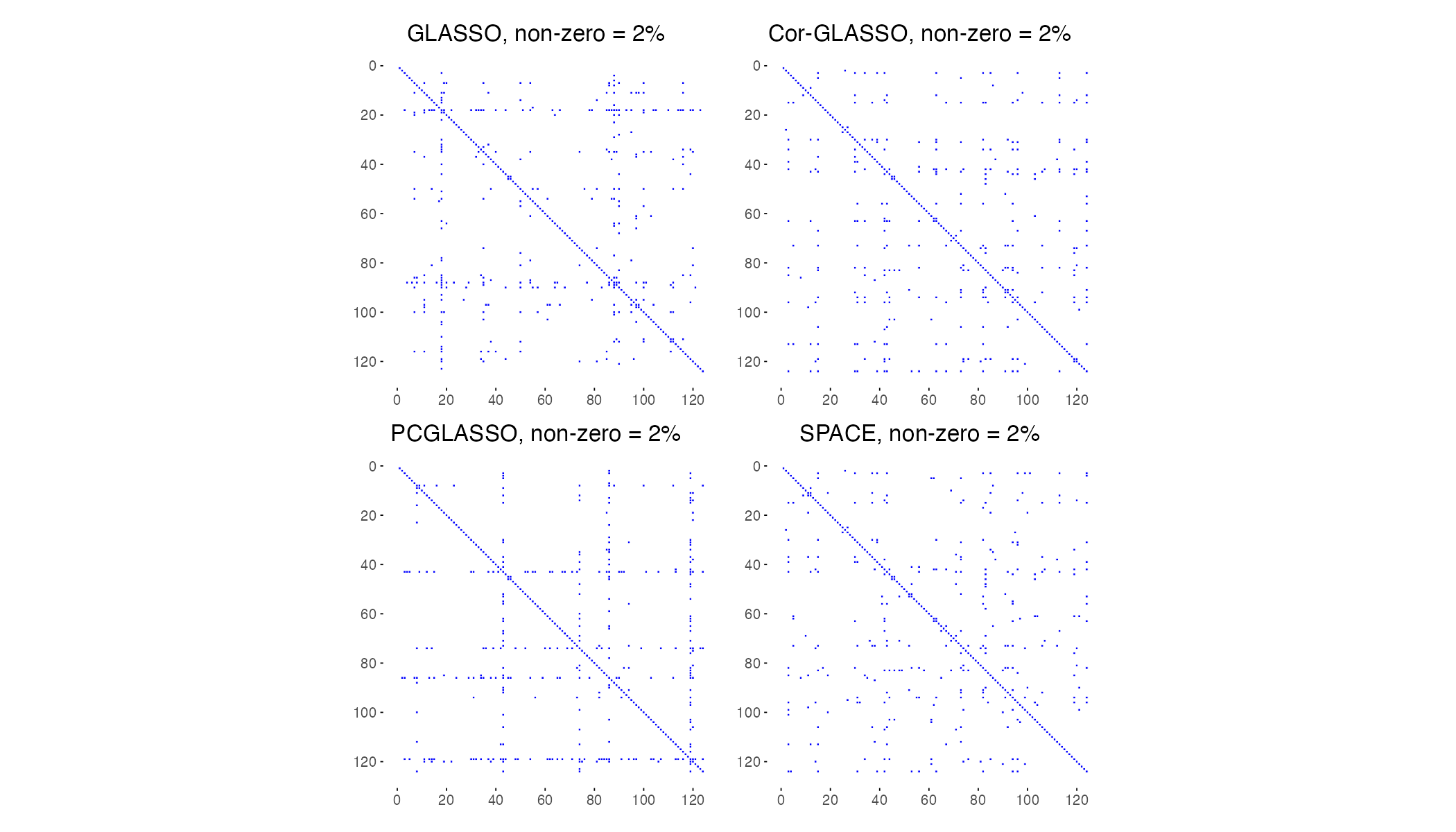}    
    \caption{Comparison of estimated sparse precision matrices: GLASSO, Cor-GLASSO, PCGLASSO, and SPACE (non-zero = 2\%).}
    \label{fig:matrices}
    \end{figure}
	
\begin{figure}
    \centering
    \includegraphics[width=0.4\linewidth]{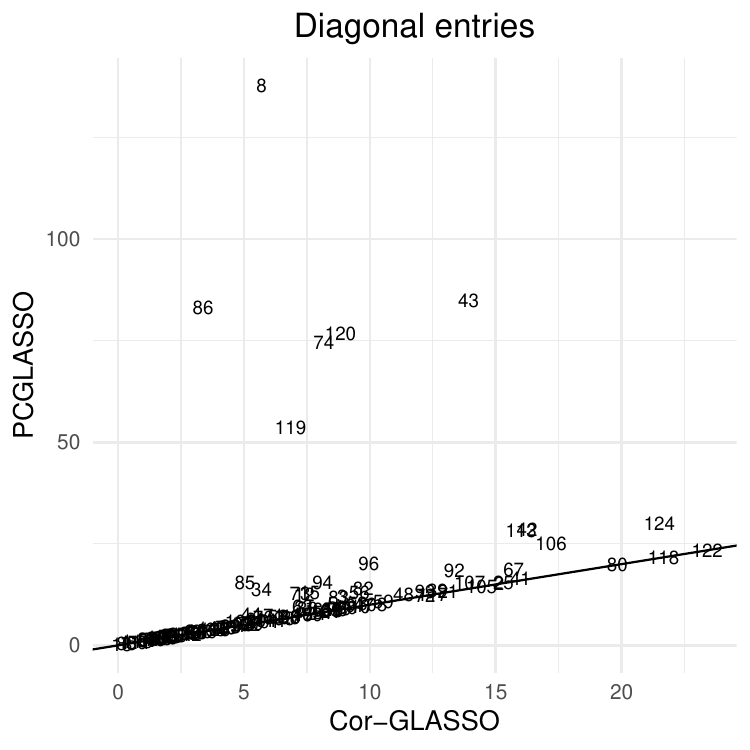}
\caption{Comparison of diagonal precision estimates under PCGLASSO and Cor-GLASSO. PCGLASSO produces larger diagonal values for hub nodes, while Cor-GLASSO exhibits substantial shrinkage.}
    \label{fig:placeholder2}
\end{figure}

\begin{figure}
    \centering
    \includegraphics[width=0.48\linewidth]{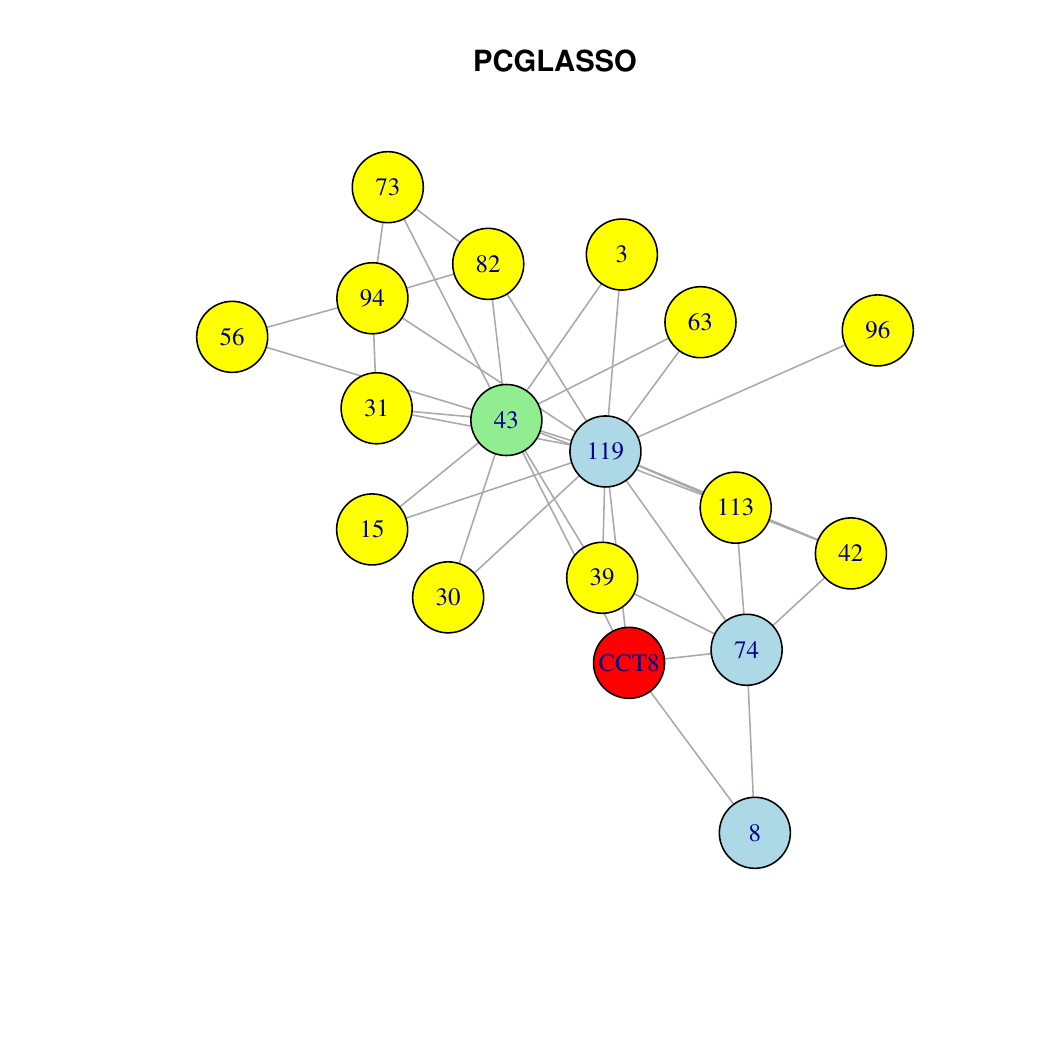}
    \includegraphics[width=0.48\linewidth]{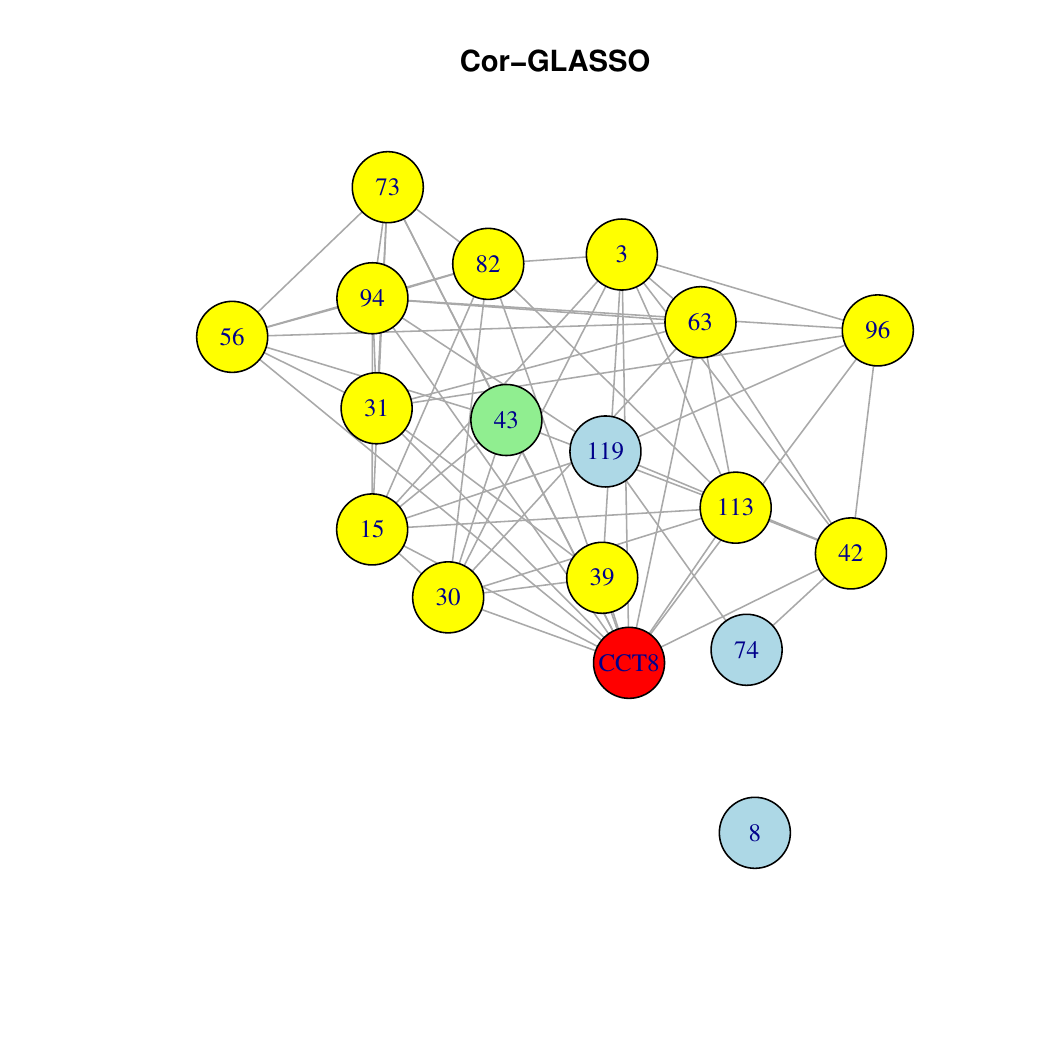}
\caption{Genes directly connected to CCT8 in the estimated graphs of PCGLASSO (left) and Cor-GLASSO (right). The two methods share only one common neighbor, gene 43 (green). Additional PCGLASSO neighbors are shown in blue, whereas genes selected only by Cor-GLASSO are shown in yellow.}
    \label{fig:placeholder1}
\end{figure}

Figure \ref{fig:matrices} highlights clear differences between the graphs produced by the various methods, even though they contain the same number of edges (2\% of the total number of gene pairs). The PCGLASSO model exhibits the most structured topology, with four prominent hubs corresponding to genes 43, 74, 86, and 119. In contrast, the GLASSO solution is more diffuse, featuring a single dominant hub associated with gene 18. The graphs obtained from Cor-GLASSO and SPACE are even more diffuse and do not display a clear structural organization.

Figure \ref{fig:placeholder2} provides insight into this phenomenon. It shows that Cor-GLASSO substantially shrinks the diagonal elements of the precision matrix, even though these elements are not directly penalized. This shrinkage limits the number and magnitude of the off-diagonal entries within each row, effectively restricting the formation of hubs. In contrast, PCGLASSO produces much larger diagonal estimates for hub nodes, thereby enabling the identification of the hub structure.

Figure \ref{fig:placeholder1} graphically compares the network of genes directly connected to CCT8 in the adjacency matrices obtained using PCGLASSO and Cor-GLASSO, as illustrated in Figure~\ref{fig:placeholder2}. Both methods identify only one common gene, 43 (marked in green), as a direct predictor of CCT8. This gene forms a strong hub in the PCGLASSO model, but not in the GLASSO-based models. 

Other genes connected to CCT8 by the PCGLASSO model are marked in blue and include two additional hubs, corresponding to genes 74 and 119, as well as gene 8, which itself is highly correlated (with correlation exceeding 0.977) with hub 86. This suggests that the hubs identified by PCGLASSO constitute the primary direct predictors of CCT8.

In contrast, the GLASSO model connects CCT8 to 13 additional genes (marked in yellow), resulting in a network that is considerably denser and less structured than that obtained with PCGLASSO.

Based on the shapes of the likelihood functions shown in Figure \ref{Fig:XXX}, together with our theoretical results demonstrating the superiority of PCGLASSO in accurately identifying hub structures, we believe that the model selected by PCGLASSO provides a more faithful representation of the dependencies between genes than the models obtained using the other methods.



\subsection{Prostate cancer patients}\label{sec:real_ex2}
We revisit the prostate cancer RNA-seq dataset of \citet{HUB25}. Among GLASSO, Cor-GLASSO, SPACE, and PCGLASSO, PCGLASSO attains the lowest EBIC (paths reported in the Appendix~\ref{sec:prostateCont}). Using partial-correlation diagnostics, our hub findings differ from \citet{HUB25} because their hub score is defined on the precision matrix and has a few nearly collinear columns. Specifically, \citet{HUB25} score nodes via the squared $\ell_2$-norm of the $i$th row of the precision matrix,
\[
\alpha_i^{(2)}(\hat K)=\sum_{j\neq i}\hat K_{ij}^{2}.
\]
 To mitigate scaling, we also rank by the absolute row sum of the partial-correlation matrix,
\[
\alpha_i^{(1)}(\hat R)=\sum_{j\neq i}\lvert \hat R_{ij}\rvert .
\]
The sorted $\alpha^{(1)}(\hat R)$ shows one outlier (indices, $53{=}\texttt{EEF2}$), see the top-right panel in Figure~\ref{fig:cancer_panels}, none highlighted in \citet{HUB25}. By contrast, $\alpha_i^{(2)}(\hat K)$ has five clear outliers which are the five hub genes reported by \citet{HUB25} ($151=\texttt{MIR3609}$, $174=\texttt{SCARNA7}$, $1=\texttt{SEMG1}$, $12=\texttt{SEMG2}$, $6=\texttt{RN7SK}$); Figure~\ref{fig:cancer_panels} bottom-left panel. The same five genes also appear among the largest diagonal precision entries, where we write $\hat D=\mathrm{diag}(\hat K)^{1/2}$; Figure~\ref{fig:cancer_panels} bottom-right panel. The large entries of $\hat{D}$, for these data, are due to collinearity between a few variables. For instance \texttt{SEMG1} and \texttt{SEMG2} are almost collinear (empirical correlation $r=0.999$), removing \texttt{SEMG2} drops $\alpha^{(2)}_{\texttt{SEMG1}}$ from $3{\,}777{\,}822$ (rank $3$) to $1{\,}010$ (rank $21$). Neither \texttt{SEMG1} nor \texttt{SEMG2} can reasonably be considered a hub; this observation reveals a limitation of precision-matrix-derived scores compared with partial-correlation–based criteria.
See Appendix~\ref{sec:prostateCont} for more details.
\begin{figure}[t]
  \centering
  \includegraphics[width=.4\linewidth,height=.3\linewidth]{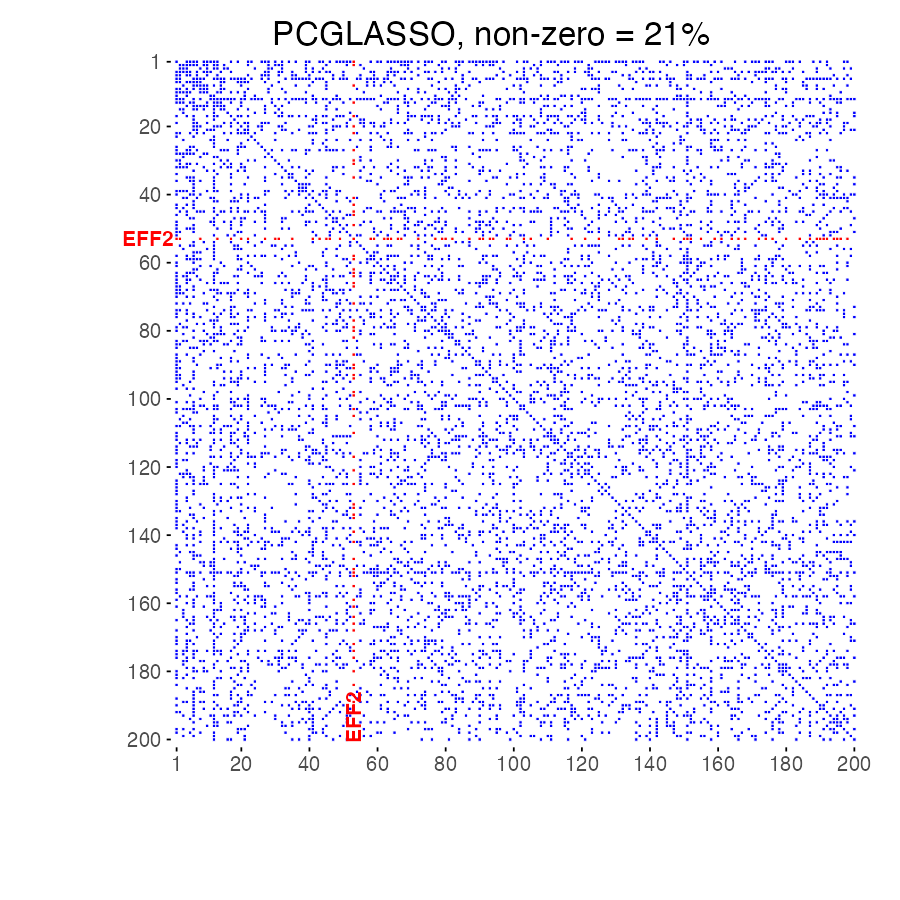}
  \includegraphics[width=.4\linewidth]{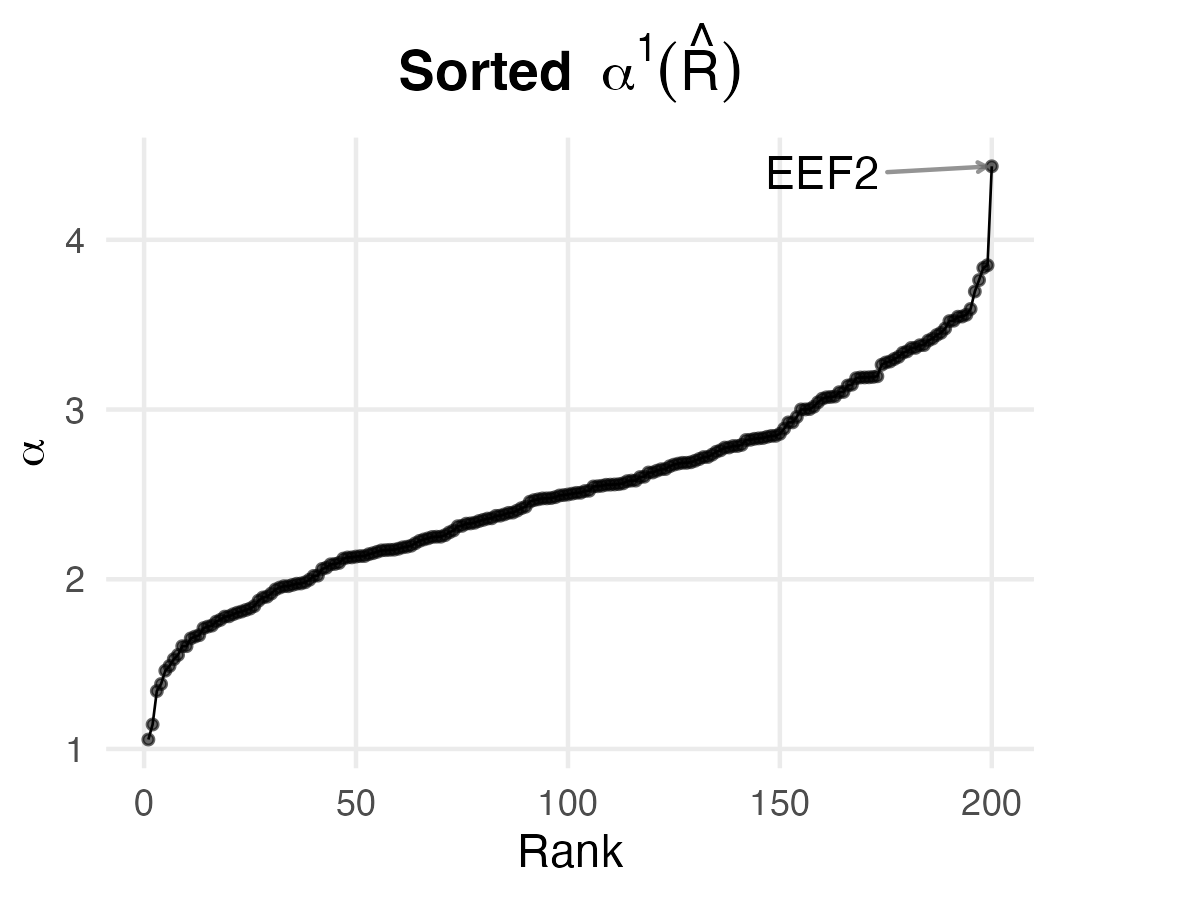}\\
  \includegraphics[width=.4\linewidth]{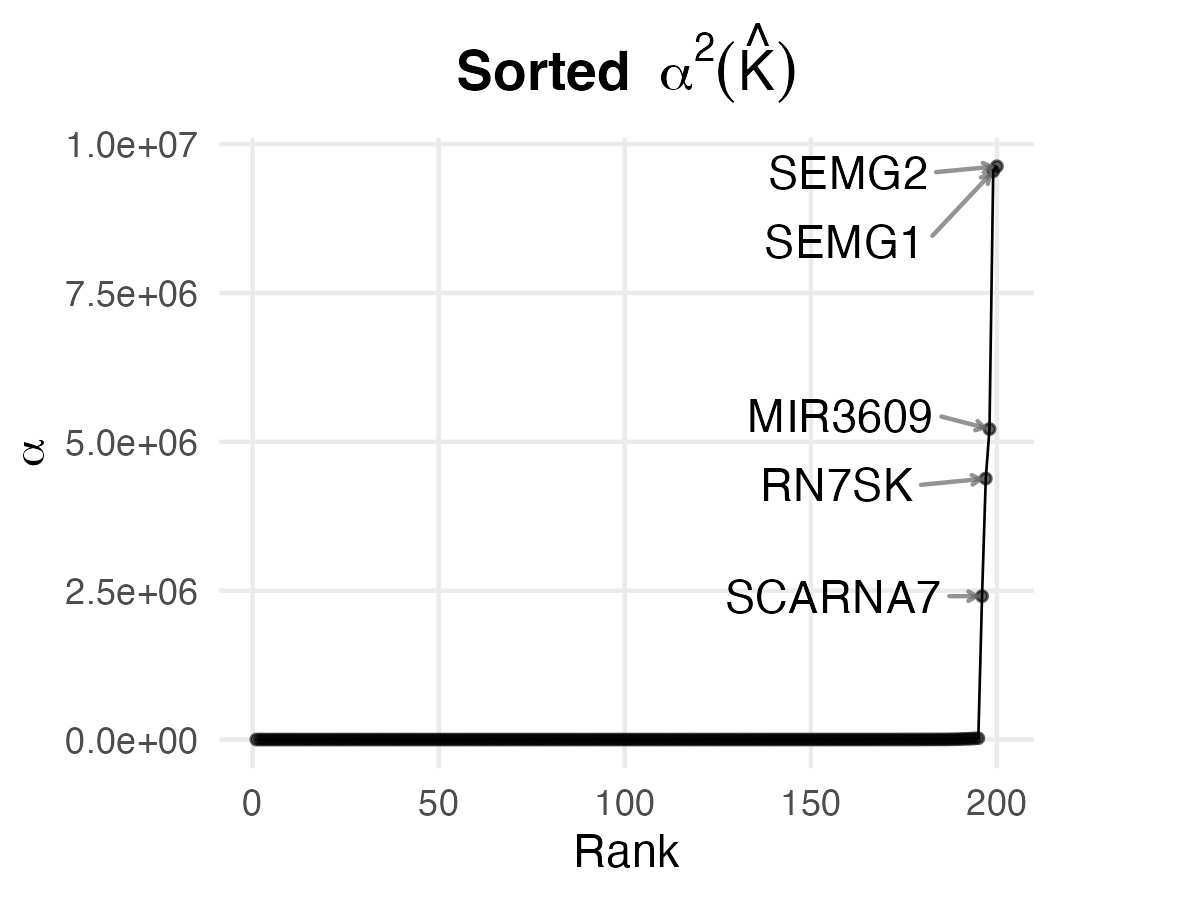}
  \includegraphics[width=.4\linewidth]{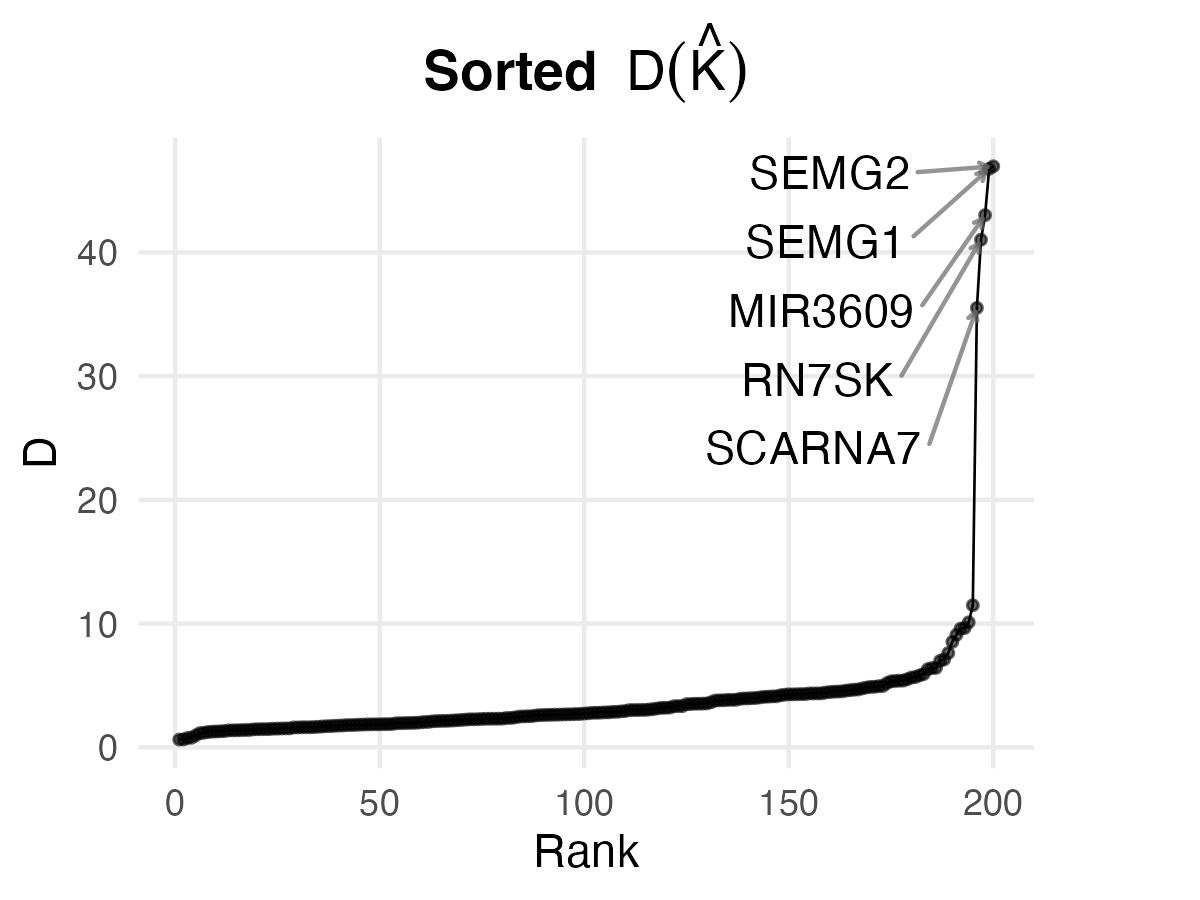}
  \caption{Prostate cancer RNA-seq network under PCGLASSO. (a) Estimated adjacency (nonzeros of $\hat K$). (b) Sorted $\alpha^{(1)}(\hat R)$ highlights one outlier \texttt{EEF2}. (c) Sorted $\alpha^{(2)}(\hat K)$ is dominated by near-duplicate pairs. (d) Large diagonal precision entries $\hat D=\mathrm{diag}(\hat K)^{1/2}$ pick out the five genes reported by \citet{HUB25}, but this reflects near-collinearity rather than hub-like connectivity.}
  \label{fig:cancer_panels}
\end{figure}

\subsection{Simulation study}\label{sec:simulations}
To validate the effectiveness of the proposed methods and benchmark them against existing approaches, we design a simulation study based on a covariance structure estimated from real data. Specifically, we estimate a covariance matrix $\Sigma$ from the gene dataset in Section~\ref{sec:real_ex} using PCGLASSO, with the regularization parameter chosen to yield a high level of sparsity. The corresponding precision matrix exhibits a clear hub structure, with a few highly connected nodes and many sparsely connected ones, as can be seen in Figure~\ref{fig:hub_structure}. A complementary simulation study based on a covariance which is taken from a GLASSO path is reported in Appendix~\ref{sec:appendix_applied}.

\begin{figure}[htbp]
    \centering
    \includegraphics[width=0.5\linewidth]{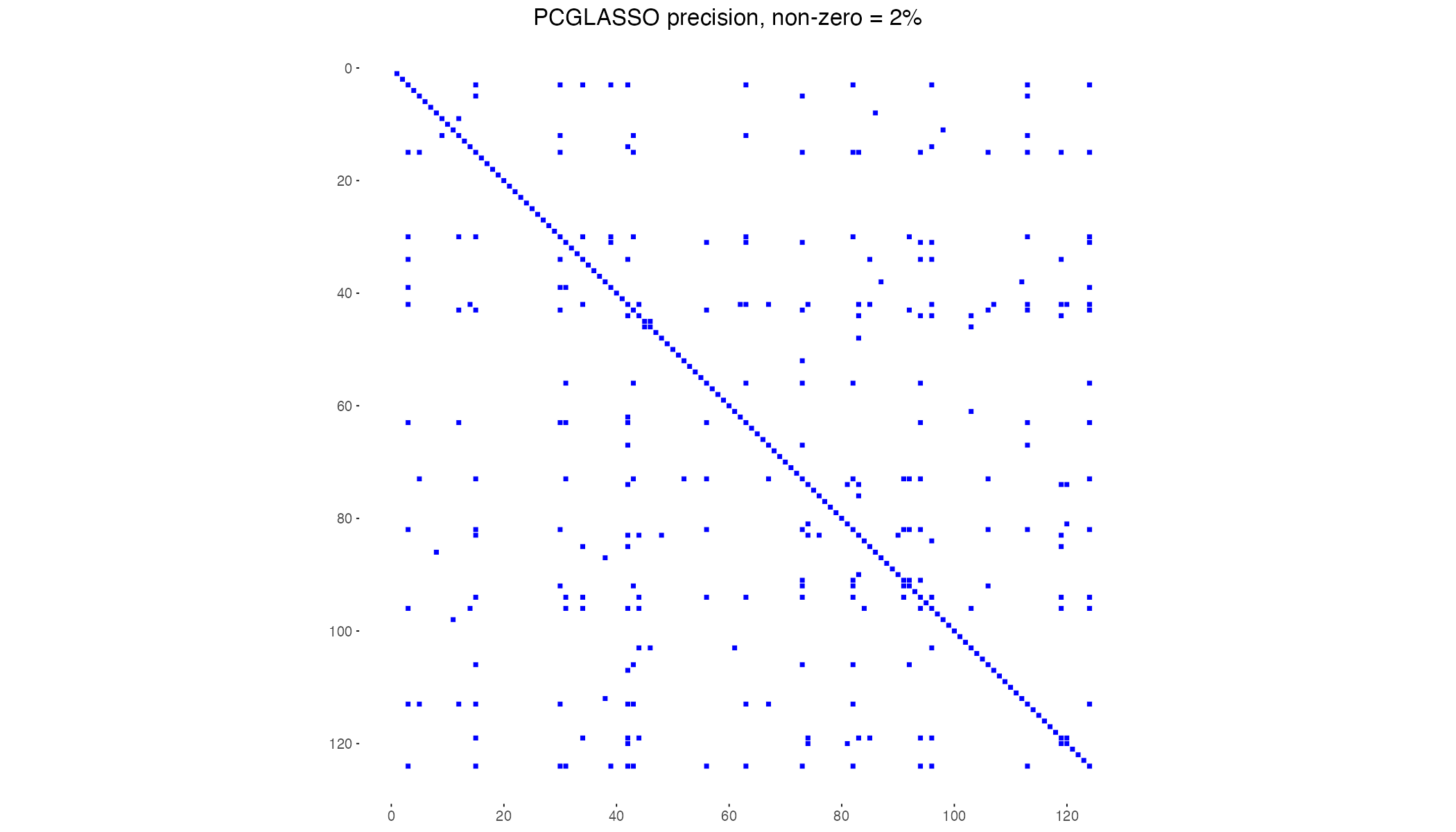}
    \caption{Nonzero pattern for the hub-structured precision matrix used in the simulation study.}
    \label{fig:hub_structure}
\end{figure}

We generate independent samples
$
X_i \sim \mathcal{N}_p(0,\Sigma)$ 
 for $i=1,\ldots,n,$
where $\Sigma$ is the PCGLASSO-based estimate.

We compare the performance of the following methods:
\begin{itemize}
    \item \textbf{GLASSO:} The GLASSO estimator.
    \item \textbf{Cor-GLASSO:} Estimation of the inverse correlation matrix via GLASSO.
    \item \textbf{SPACE:} The method proposed in \cite{SPACE23}, designed for sparse precision matrix estimation.
    \item \textbf{PCGLASSO:} The proposed Partial Correlation GLASSO method, implemented using either the \texttt{pcglassoFast\_Primal} or the \texttt{pcglassoFast\_Dual} algorithm.
\end{itemize}
For each method, hyperparameters are selected either by Bayesian Information Criterion (BIC) or by a single validation split (Val), with 70\% of the data used for training and 30\% for validation.

The main aim of this experiment is to evaluate the estimation error of PCGLASSO when the data are generated from a hub-structured precision matrix. We compare its performance with that of competing methods in terms of how accurately they estimate the underlying precision matrix, and we also assess their computational efficiency through timing comparisons.

We simulate datasets with sample sizes $n = 200$, $500$, $1\,000$, and $5\,000$. For each configuration, we compute the root mean squared error (RMSE) for the full matrix, the diagonal elements, and the nonzero off-diagonal elements. In addition, we record the computation time for each method. Each experiment is repeated $200$ times to assess the variability of the estimators.

The results are summarized in Tables~\ref{tab:hubrmse} and~\ref{tab:hubtime}. In the tables, CGL denotes Cor-GLASSO and GL denotes GLASSO. The PCGLASSO results are reported separately for the primal and dual algorithms. BIC indicates hyperparameter selection by BIC, while Val indicates selection by a single validation split. For the hub-structured precision matrix, PCGLASSO demonstrates the strongest overall performance in terms of RMSE, with SPACE performing competitively in some settings. The timing results show that SPACE is substantially slower than the other methods, especially as $n$ increases.

Overall, these simulations indicate that PCGLASSO performs very well in the hub-structured setting in terms of estimation error, while maintaining computational efficiency comparable to the other methods. The primal and dual implementations perform essentially equivalently.

\begin{longtable}[]{@{}llcccc@{}}
\caption{RMSE summary for each method and sample size.\label{tab:hubrmse}}\\
\toprule\noalign{}
Metric & Method & $n=200$ & $n=500$ & $n=1,000$ & $n=5,000$ \\
\midrule\noalign{}
\endfirsthead

\multicolumn{6}{c}{{\tablename\ \thetable{} -- continued from previous page}} \\
\toprule\noalign{}
Metric & Method & $n=200$ & $n=500$ & $n=1,000$ & $n=5,000$ \\
\midrule\noalign{}
\endhead

\midrule \multicolumn{6}{r}{{Continued on next page}} \\
\endfoot

\bottomrule
\endlastfoot

RMSE & CGL BIC & 1.50 & 1.37 & 1.27 & 0.96 \\
 & CGL Val & 1.39 & 1.24 & 1.11 & 0.73 \\
 & GL BIC & 1.63 & 1.53 & 1.38 & 0.99 \\
 & GL Val & 1.41 & 1.18 & 1.00 & 0.59 \\
 & pcglassoFast\_Primal BIC & \textbf{0.32} & 0.20 & 0.19 & 0.09 \\
 & pcglassoFast\_Primal Val & 0.37 & \textbf{0.18} & \textbf{0.13} & 0.06 \\
 & pcglassoFast\_Dual BIC & \textbf{0.32} & 0.21 & 0.18 & 0.07 \\
 & pcglassoFast\_Dual Val & 0.39 & 0.19 & 0.14 & \textbf{0.06} \\
 & SPACE BIC & 1.04 & 0.40 & 0.21 & 0.06 \\
 & SPACE Val & 0.60 & 0.27 & 0.17 & 0.07 \\

\midrule\noalign{}
Diag RMSE & CGL BIC & 12.30 & 11.20 & 10.30 & 7.84 \\
 & CGL Val & 11.30 & 10.10 & 8.98 & 5.92 \\
 & GL BIC & 13.10 & 12.50 & 11.30 & 8.08 \\
 & GL Val & 11.50 & 9.65 & 8.18 & 4.82 \\
 & pcglassoFast\_Primal BIC & 2.47 & 1.52 & 1.40 & 0.64 \\
 & pcglassoFast\_Primal Val & 2.91 & \textbf{1.38} & \textbf{0.99} & 0.46 \\
 & pcglassoFast\_Dual BIC & \textbf{2.46} & 1.55 & 1.33 & 0.49 \\
 & pcglassoFast\_Dual Val & 3.14 & 1.47 & 1.04 & 0.43 \\
 & SPACE BIC & 7.86 & 2.67 & 1.37 & \textbf{0.35} \\
 & SPACE Val & 4.19 & 1.82 & 1.12 & 0.41 \\

\midrule\noalign{}
Off-diag (NZ) & CGL BIC & 9.65 & 8.88 & 8.20 & 6.27 \\
RMSE & CGL Val & 8.97 & 8.04 & 7.17 & 4.75 \\
 & GL BIC & 10.70 & 9.89 & 8.90 & 6.41 \\
 & GL Val & 9.09 & 7.65 & 6.49 & 3.85 \\
 & pcglassoFast\_Primal BIC & \textbf{2.16} & 1.42 & 1.30 & 0.62 \\
 & pcglassoFast\_Primal Val & 2.35 & \textbf{1.17} & \textbf{0.86} & 0.42 \\
 & pcglassoFast\_Dual BIC & \textbf{2.16} & 1.44 & 1.25 & 0.51 \\
 & pcglassoFast\_Dual Val & 2.53 & 1.23 & 0.89 & \textbf{0.38} \\
 & SPACE BIC & 7.33 & 3.05 & 1.61 & 0.51 \\
 & SPACE Val & 4.36 & 1.91 & 1.15 & 0.47 \\

\end{longtable}

\begin{longtable}[]{@{}lcccc@{}}
\caption{Computation time (seconds) for each method and sample size. \label{tab:hubtime}}\\
\toprule
Method & $n=200$ & $n=500$ & $n=1,000$ & $n=5,000$ \\
\midrule
\endfirsthead

\multicolumn{5}{c}{{\tablename\ \thetable{} -- continued from previous page}} \\
\toprule
Method & $n=200$ & $n=500$ & $n=1,000$ & $n=5,000$ \\
\midrule
\endhead

\bottomrule
\endlastfoot

CGL BIC & 5.42 & 4.10 & 2.87 & 1.36 \\
CGL Val & 6.69 & 4.73 & 3.37 & 1.53 \\
GL BIC & \textbf{1.49} & \textbf{1.32} & \textbf{1.01} & \textbf{0.57} \\
GL Val & 1.78 & 1.52 & 1.12 & 0.62 \\
pcglassoFast\_Primal BIC & 3.77 & 2.74 & 2.46 & 2.17 \\
pcglassoFast\_Primal Val & 4.03 & 2.66 & 2.45 & 2.09 \\
pcglassoFast\_Dual BIC & 10.10 & 7.08 & 5.98 & 5.21 \\
pcglassoFast\_Dual Val & 11.40 & 8.25 & 6.16 & 4.92 \\
SPACE BIC & 10.40 & 28.10 & 55.40 & 307.00 \\
SPACE Val & 6.36 & 18.80 & 35.30 & 188.00 \\

\end{longtable}

\section*{Funding}
The research of BK and ACH was funded in part by National Science Centre, Poland, UMO-2022/45/B/ST1/00545. The research of MB, IH and JW was funded by the Swedish Research Council, grant no. 202005081.

This research was carried out with the support of the High Performance Computing Center at Faculty of Mathematics and Information Science Warsaw University of Technology.

\section{Data Availability Statement}\label{data-availability-statement}

No new primary data were collected for this study. The gene expression data
analyzed in Section~\ref{sec:real_ex} are publicly available from the NCBI Gene
Expression Omnibus under accession number GSE6536. The prostate cancer RNA-seq
data analyzed in Section~\ref{sec:real_ex2} are the data considered by
\citet{HUB25}; details on data access are provided in that reference. The
simulation results reported in this article can be reproduced using the
data-generating mechanisms and procedures described in Section~\ref{sec:simulations}
and in \url{https://github.com/PrzeChoj/pcglasso_article_code}. Code for implementing the proposed methods and
reproducing the numerical results is provided in \url{https://github.com/PrzeChoj/pcglassoFast}, \cite{chojeckiwallin2025pcglassofast}.

\appendix

\section{Comparison of PCGLASSO implementations under matched objective accuracy}
\label{app:comparison_carter}

In this appendix, we compare three implementations of the PCGLASSO optimization procedure:
\texttt{pcglasso}, which is reference implementation from \cite{carter_arxiv_2025};
\texttt{pcglassoFast\_Primal}, which implements the \texttt{pcglassoFast\_Primal} coordinate descent algorithm in $R$ described in Algorithm~\ref{alg:updateR};
and \texttt{pcglassoFast\_Dual}, which implements the \texttt{pcglassoFast\_Dual} coordinate descent algorithm in $W = R^{-1}$ described in Algorithm~\ref{alg:R}.
For each implementation, we also consider two starting points for $R$, denoted by \texttt{I} and \texttt{C}. Here \texttt{I} corresponds to the identity matrix, while \texttt{C} is defined as \texttt{cov2cor(solve(S))}.

The main goal of this comparison is to assess computational efficiency under a fair criterion.
A direct comparison of runtimes at default stopping rules may be misleading, since different algorithms may terminate at different objective values.
Therefore, instead of comparing raw runtimes, we compare the time needed to attain a given level of objective accuracy.

\subsection{Experimental setup}
We consider four graph structures:
\texttt{AR2}, \texttt{random}, \texttt{hub\_09}, and \texttt{hub\_1}.

For \texttt{AR2}, the nonzero off-diagonal entries are given by
\[
K^\ast_{i,i+1} = K^\ast_{i+1,i} = \frac12,
\qquad
K^\ast_{i,i+2} = K^\ast_{i+2,i} = \frac14,
\]
for all indices for which these entries are defined, and all remaining off-diagonal entries are zero.

For \texttt{random}, we start from the identity matrix and randomly generate off-diagonal entries with random signs and magnitudes uniformly distributed on $[0.4,1]$ until the number of nonzero off-diagonal entries is at least $3p/2$.
Next, in each column, the off-diagonal entries are rescaled by dividing them by $1.1$ times the sum of their absolute values.
The matrix is then symmetrized.
If the resulting matrix is not positive definite, the whole procedure is repeated until a positive definite matrix is obtained.

For the hub graphs, the nonzero off-diagonal entries adjacent to the hub are equal to $-1/\sqrt{p}$ for \texttt{hub\_1} and to $-0.9/\sqrt{p}$ for \texttt{hub\_09}.
The remaining off-diagonal entries are zero.

The simulations are performed for
$p \in \{50, 100, 150, 200\}$,
$n = 2p$,
$\lambda \in \{0.1, 0.2\}$,
and
$\alpha \in \{0, 0.5\}$.
Each configuration is repeated $M = 100$ times.

For each pair $(p, \mbox{graph structure})$, we generate data and compute the sample correlation matrix, denoted by $S$.
All methods are then run on this same matrix, so that they are compared on the same instance of the optimization problem.

For every combination of graph structure, dimension, regularization parameters, algorithm, starting point, and tolerance, we record the runtime and the final attained objective value, denoted by $f_{\mathrm{end}}$.
Runtimes are aggregated across repetitions using the median.
All three compared algorithms are deterministic: for every fixed configuration, all $M = 100$ values of $f_{\mathrm{end}}$ were identical, so we report the common attained value.
This is expected, since once the sample correlation matrix $S$ is fixed, the optimization routine is deterministic.

Next, for each tuple $(p, \mbox{graph structure}, \lambda, \alpha)$, we define a benchmark value $f_{\mathrm{best}}$ as the best value obtained using a very strict stopping criterion.
This gives a common reference level for all compared methods.

A detailed description of the experimental setup, including all implementation details, is available in the online repository:
\url{https://github.com/PrzeChoj/pcglasso_article_code/blob/main/experiments/Appendix_A/plan.md}.

\subsection{Accuracy--time trade-off}
Figure~\ref{fig:comparison_type1_all} shows, for each graph structure, a representative accuracy-versus-time plot for the case $p = 200$, $\lambda = 0.1$, and $\alpha = 0$.
On these plots, the horizontal axis is the median runtime, while the vertical axis is the difference between the attained objective value and the benchmark value $f_{\mathrm{best}}$, shown on a logarithmic scale.
Smaller values on both axes are better, so points located lower and further to the left correspond to better performance.
These four plots are selected as representative examples; the complete collection of $4 \times 4 \times 2 \times 2 = 64$ plots, corresponding to all considered combinations of dimension $p$, graph structure, and regularization parameters $(\lambda,\alpha)$, is available in the online repository:
\url{https://github.com/PrzeChoj/pcglasso_article_code/tree/main/experiments/Appendix_A/plots/type_1}.

\begin{figure}
    \centering
    \includegraphics[width=\textwidth]{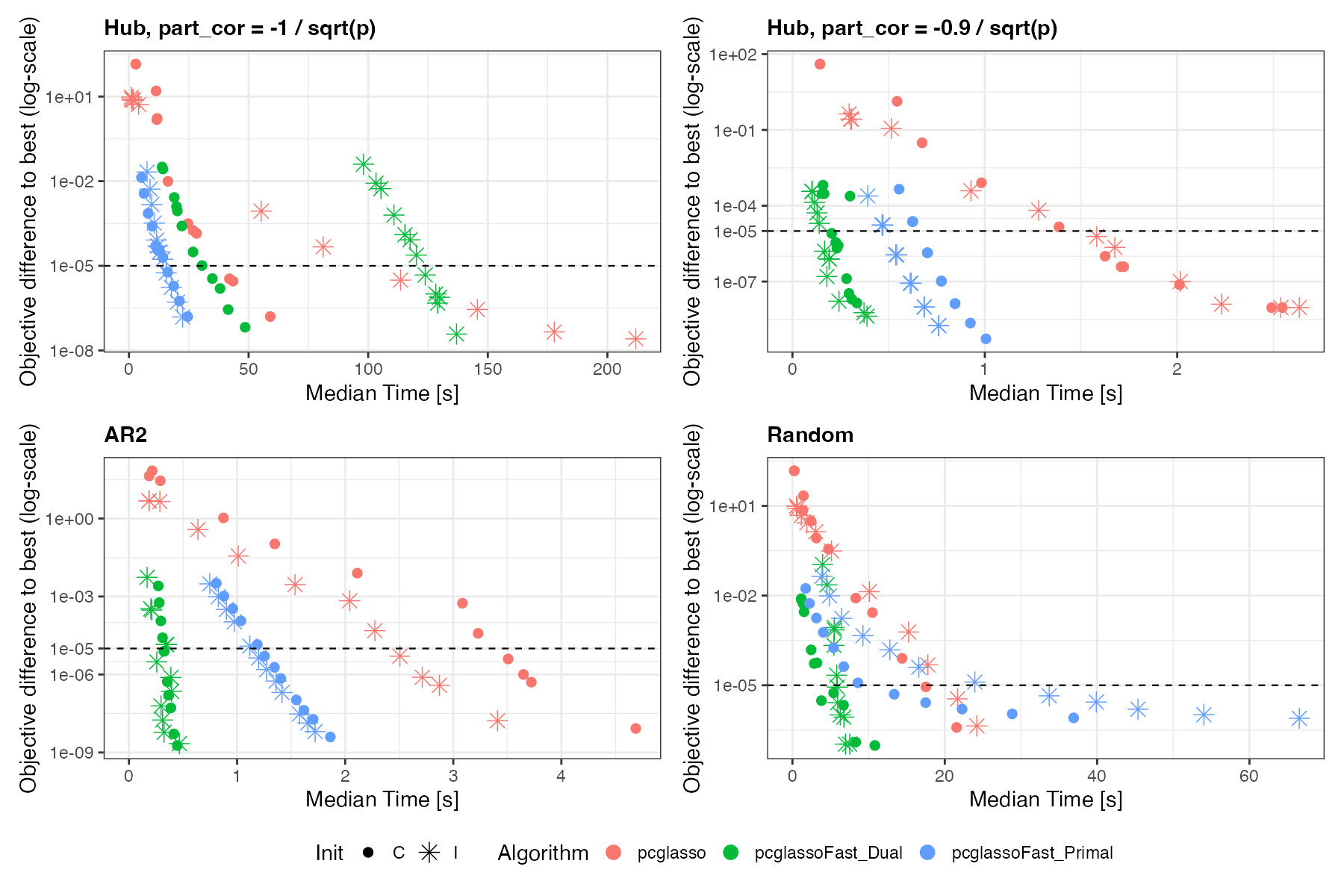}
    \caption{Accuracy-versus-time comparison for the four graph structures (\texttt{hub\_1}, \texttt{hub\_09}, \texttt{AR2}, \texttt{random}) for $p = 200$, $\lambda = 0.1$, and $\alpha = 0$.
    Each point corresponds to one combination of algorithm (\texttt{pcglasso}, \texttt{pcglassoFast\_Dual}, \texttt{pcglassoFast\_Primal}), starting point (\texttt{C} or \texttt{I}), and stopping tolerance.
    The horizontal axis shows the median runtime over $M = 100$ repetitions.
    The vertical axis shows the objective gap $f_{\mathrm{end}} - f_{\mathrm{best}}$ on a logarithmic scale.
    Colors distinguish algorithms, while point shapes distinguish starting points.
    The dashed horizontal line marks the accuracy threshold $10^{-5}$ used in the selection procedure for Figure~\ref{fig:comparison_type2_all}.
    Points closer to the lower-left corner indicate better performance.}
    \label{fig:comparison_type1_all}
\end{figure}

These plots reveal a clear ordering across several graph structures, in particular for \texttt{AR2} and \texttt{hub\_09}, where the method \texttt{pcglassoFast\_Dual} consistently reaches a given accuracy level fastest, followed by \texttt{pcglassoFast\_Primal}, while \texttt{pcglasso} is the slowest. In these representative examples, \texttt{pcglasso} can be roughly one order of magnitude slower than \texttt{pcglassoFast\_Dual}.
For \texttt{hub\_1}, \texttt{pcglassoFast\_Primal} appears to provide the best performance for both starting points.
The method \texttt{pcglassoFast\_Dual} with starting point \texttt{C} is slightly slower, while with starting point \texttt{I} it is noticeably slower.
For the \texttt{random} graph, no strong dominance is visible in these representative accuracy--time plots, and the relative performance varies across settings.

\subsection{Runtime comparison at matched accuracy}
To obtain a direct runtime comparison at matched optimization quality, we use the following procedure.
For each fixed tuple
$(p, \mbox{graph structure}, \lambda, \alpha, \mbox{solver}, \mbox{starting point})$,
we choose the loosest stopping criterion that still yields a solution within $10^{-5}$ of $f_{\mathrm{best}}$.
In other words, we select the largest tolerance such that
\[
f_{\mathrm{best}} \leq f_{\mathrm{end}} < f_{\mathrm{best}} + 10^{-5}.
\]
We then compare the runtimes obtained at these selected tolerances.

\begin{figure}
    \centering
    \includegraphics[width=\textwidth]{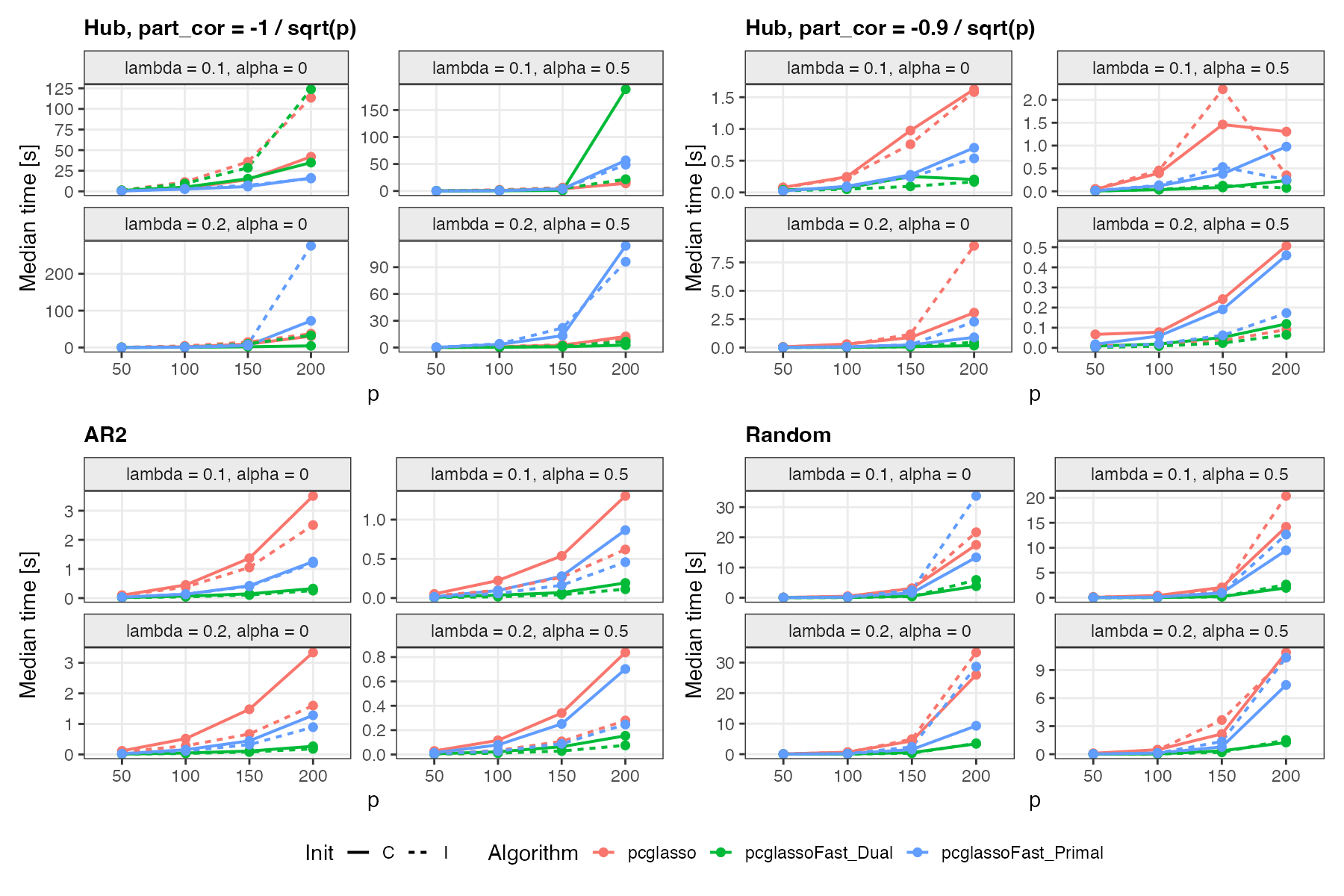}
    \caption{Median runtime as a function of $p$ after matching the optimization accuracy across methods.
    For each combination of graph structure, $(\lambda, \alpha)$, algorithm, and starting point, we select the largest stopping tolerance such that $f_{\mathrm{end}} - f_{\mathrm{best}} < 10^{-5}$.
    The reported runtime corresponds to this selected tolerance.
    Panels correspond to graph structures, and subplots within each panel correspond to $(\lambda, \alpha) \in \{0.1, 0.2\} \times \{0, 0.5\}$.
    Colors distinguish algorithms, and line types distinguish starting points.
    Smaller values indicate better computational efficiency at comparable objective accuracy.}
    \label{fig:comparison_type2_all}
\end{figure}

The resulting median runtimes as functions of $p$ are shown in Figure~\ref{fig:comparison_type2_all}.
For the graph structures \texttt{AR2}, \texttt{hub\_09}, and \texttt{random}, a consistent ordering is observed across most parameter settings and problem sizes:
\texttt{pcglassoFast\_Dual} is the fastest, \texttt{pcglassoFast\_\allowbreak Primal} is typically second, and \texttt{pcglasso} is the slowest.
The effect of initialization is not uniform: for some methods it leads to slight speedups (\texttt{AR2}), while for others it results in slightly longer runtimes (\texttt{random}).

The graph structure \texttt{hub\_1} behaves much less regularly, and no uniform ordering of the methods is visible across the four parameter settings.
For $(\lambda,\alpha)=(0.1,0)$, \texttt{pcglassoFast\_Primal} is the fastest.
For $(\lambda,\alpha)=(0.2,0)$ and $(0.2,0.5)$, however, \texttt{pcglassoFast\_Primal} becomes much slower, while \texttt{pcglassoFast\_Dual} is the fastest.
The case $(\lambda,\alpha)=(0.1,0.5)$ is particularly irregular: in this setting, \texttt{pcglasso} is the fastest for both initializations, while \texttt{pcglassoFast\_Dual} with initialization \texttt{I} performs comparably well.
At the same time, the same method with initialization \texttt{C} is noticeably the slowest.
Thus, for \texttt{hub\_1}, performance depends strongly on both the tuning parameters and the initialization.

\textbf{Limitations and implementation details.}
Firstly, the comparison is performed for individual optimization problems corresponding to fixed values of $(\lambda,\alpha)$.
In practice, many applications require solving a sequence of problems along a regularization path.
The relative performance of the compared methods can differ in such settings, and the conclusions drawn here may not necessarily transfer to path-wise optimization scenarios.

Secondly, this comparison is not exhaustive.
It is restricted to a finite grid of parameters and a limited collection of graph structures.
Extending the study to broader ranges of $(\lambda,\alpha)$, additional graph models, and larger dimensions may further refine the observed trends.

Finally, it should be noted that the compared implementations differ in their programming languages.
The method \texttt{pcglassoFast\_Dual} is implemented in Fortran, \texttt{pcglassoFast\_Primal} in \texttt{C++}, while the reference implementation \texttt{pcglasso} is written in \texttt{R}.
As a result, the observed differences reflect not only algorithmic efficiency but also the impact of implementation in compiled versus interpreted environments.

\subsection{Summary and practical recommendation}
Overall, the results indicate that \texttt{pcglassoFast\_Dual} provides the best computational efficiency across most graph structures and parameter settings, consistently achieving a given level of objective accuracy in the shortest time.
The method \texttt{pcglassoFast\_Primal} is typically the second best, while the reference implementation \texttt{pcglasso} is generally slower.
An exception is the \texttt{hub\_1} structure, where the relative performance depends strongly on $(\lambda,\alpha)$ and the choice of initialization, and no single method dominates uniformly.

The choice of initialization has a moderate and method-dependent impact, but does not alter the overall ranking in most cases.
In particular, the starting point \texttt{C} tends to provide more stable performance across settings.

Based on these findings, we adopt \texttt{pcglassoFast\_Dual} with initialization \texttt{C} as the default configuration in the \texttt{pcglassoFast} package, as it offers the best trade-off between speed and robustness across a wide range of scenarios.

\section{Applied examples and additional simulation results}\label{sec:appendix_applied}

\subsection{Prostate cancer: additional comparisons}\label{sec:prostateCont}
We present supplementary analyses for the prostate cancer RNA-seq data of \citet{HUB25} (see Section~\ref{sec:real_ex2}). Figure~\ref{fig:pathcancer} shows $\mathrm{EBIC}(\gamma=0.5)$ along the solution path for GLASSO, Cor-GLASSO, and PCGLASSO. The SPACE procedure was numerically unstable on these data and did not yield positive-definite precision-matrix estimates. For visual comparison, we display in Figure \ref{fig:adjacency_pcglasso}, for each method, the selected adjacency matrix at the EBIC-minimizing tuning parameter, together with a thresholded empirical partial-correlation graph matched to the same number of edges. Among the methods, the PCGLASSO estimator yields the most structured network and reveals several hub nodes in these data.

Finally, in Figure~\ref{fig:rows_cancer} we plot the rows of the empirical precision matrix corresponding to the largest values of \(D(\hat K)\) (see the bottom-right panel of Figure~\ref{fig:cancer_panels}). These rows show no evidence of hub-like structure; rather, they exhibit a small number of strong pairwise connections to specific genes within the group.

\begin{figure}[h]
  \centering
  \includegraphics[width=.6\linewidth]{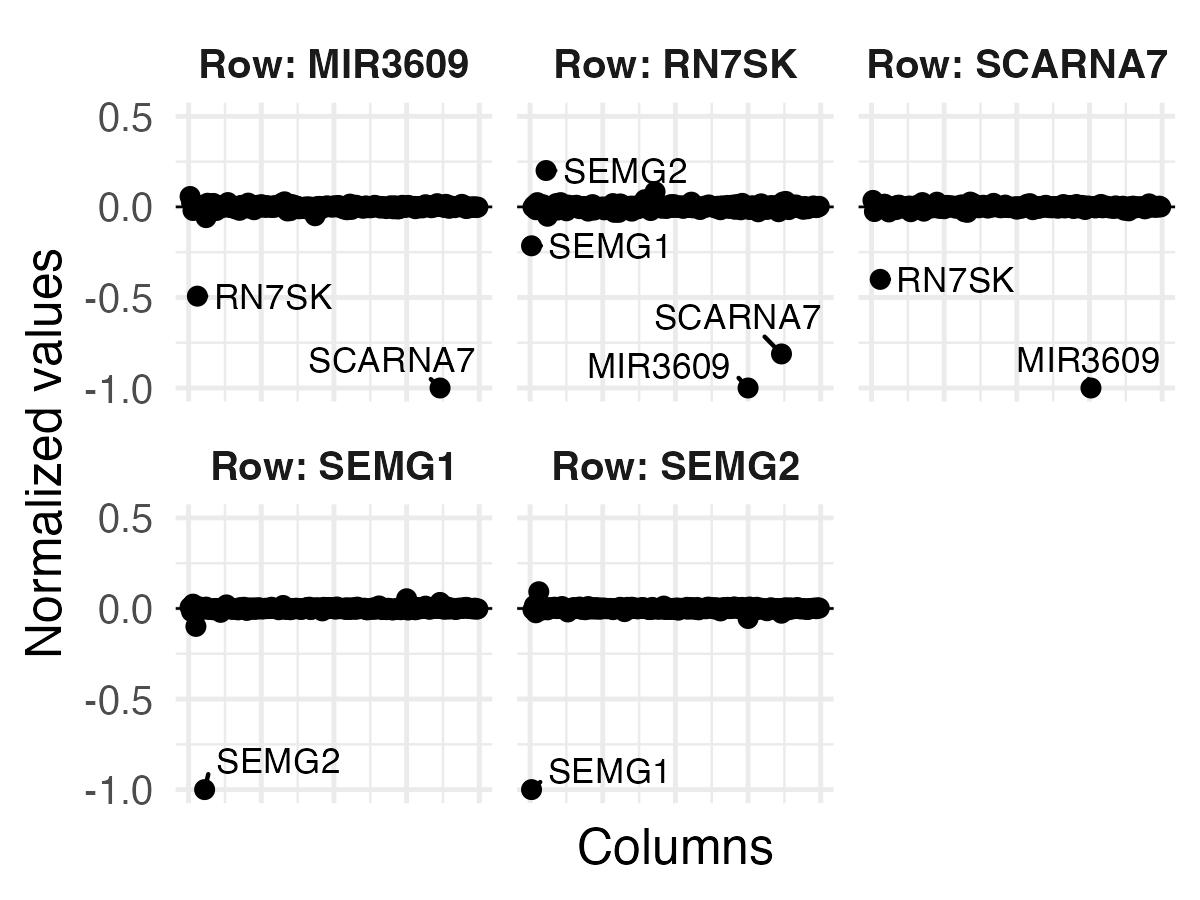}
  \caption{Rows of the empirical precision matrix (the inverse of the empirical covariance) for genes
  \(151=\texttt{MIR3609}\), \(174=\texttt{SCARNA7}\), \(1=\texttt{SEMG1}\), \(12=\texttt{SEMG2}\), and \(6=\texttt{RN7SK}\).
  For each row we removed the diagonal entry and standardized by dividing all entries by the maximum absolute value in that row (so the largest magnitude equals 1).}
  \label{fig:rows_cancer}
\end{figure}

\begin{figure}[htbp]
  \centering

  \begin{subfigure}{0.48\linewidth}
    \centering
    \includegraphics[width=\linewidth]{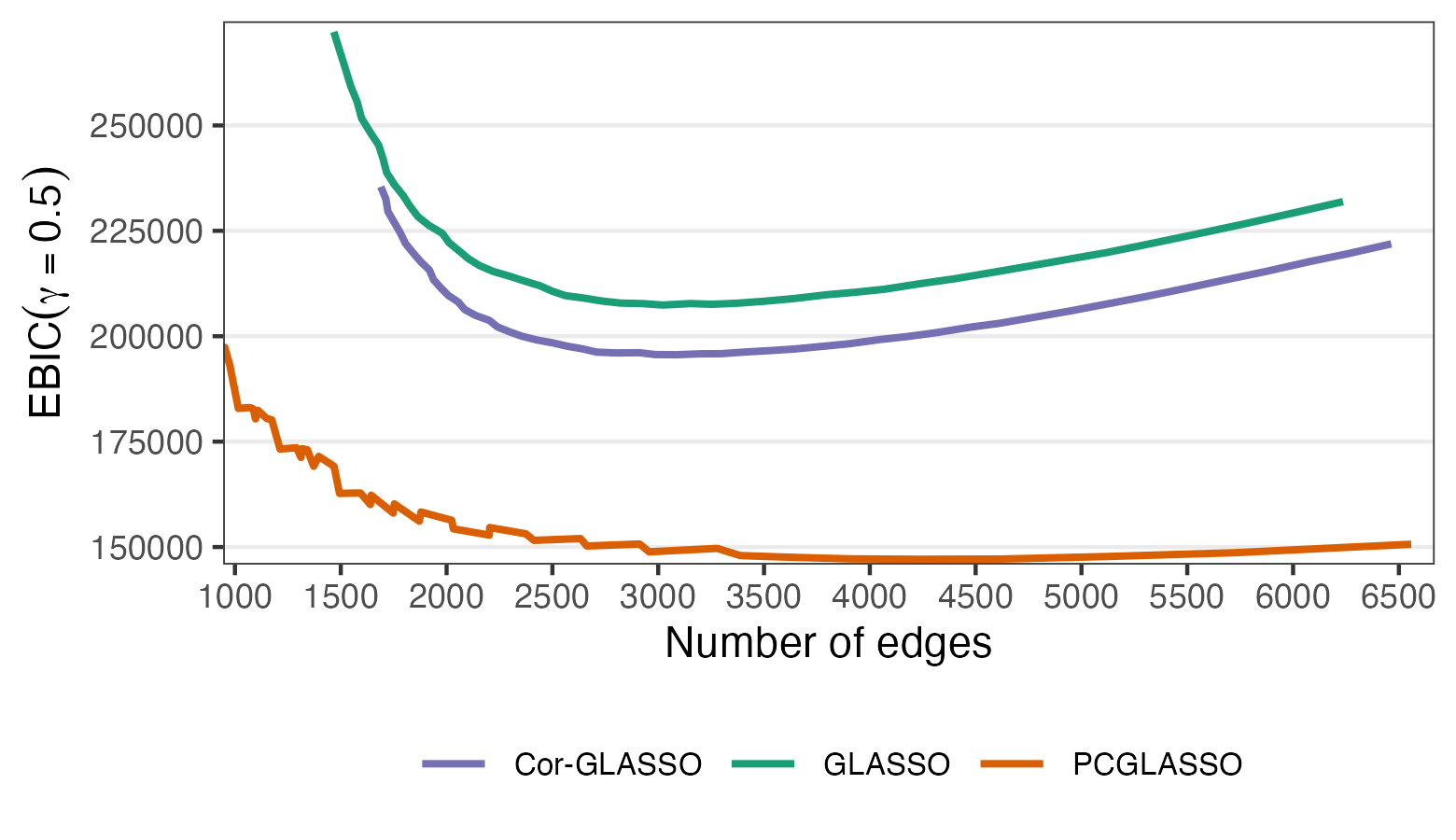}
    \caption{EBIC with $\gamma=0.5$ versus number of edges for GLASSO, Cor-GLASSO, and PCGLASSO; PCGLASSO attains the minimum.}
    \label{fig:pathcancer}
  \end{subfigure}
  \hfill
  \begin{subfigure}{0.48\linewidth}
    \centering
    \includegraphics[width=\linewidth]{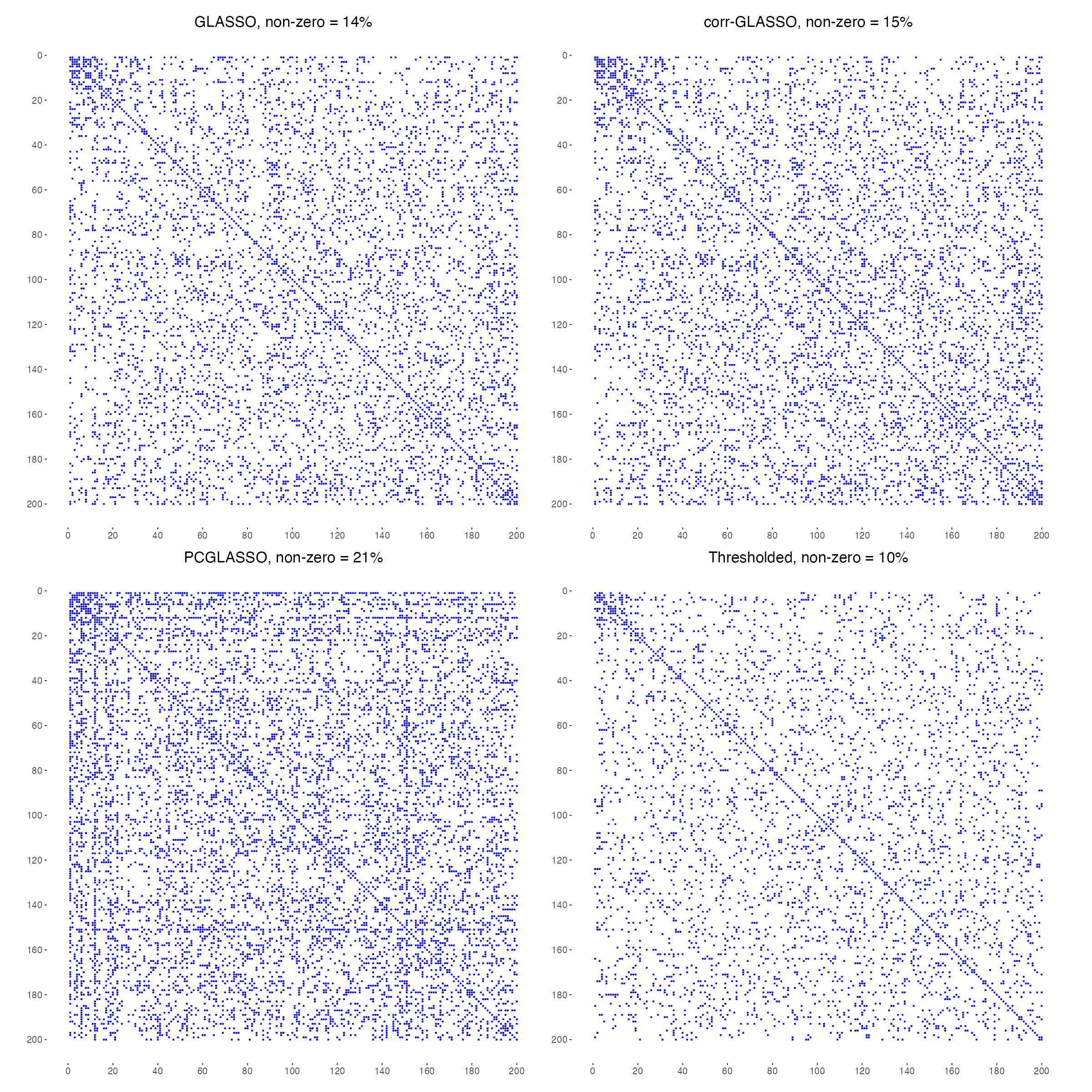}
    \caption{Selected adjacency at the EBIC minimum for GLASSO, Cor-GLASSO, PCGLASSO, and the thresholded empirical partial correlation.}
    \label{fig:adjacency_pcglasso}
  \end{subfigure}

  \caption{Cancer data analysis: model selection by EBIC and the corresponding selected adjacency matrices.}
  \label{fig:cancer_combined}
\end{figure}

\subsection{Simulation study: non-hub structure}\label{sec:simulations_nonhub_app}
For completeness, we also consider a simulation setting based on a covariance structure estimated from the same gene dataset. In this case, $\Sigma$ is an estimate from the GLASSO path, Figure~\ref{fig:nonhub_structure} displays the corresponding nonzero pattern.

\begin{figure}[htbp]
    \centering
    \includegraphics[width=0.7\linewidth]{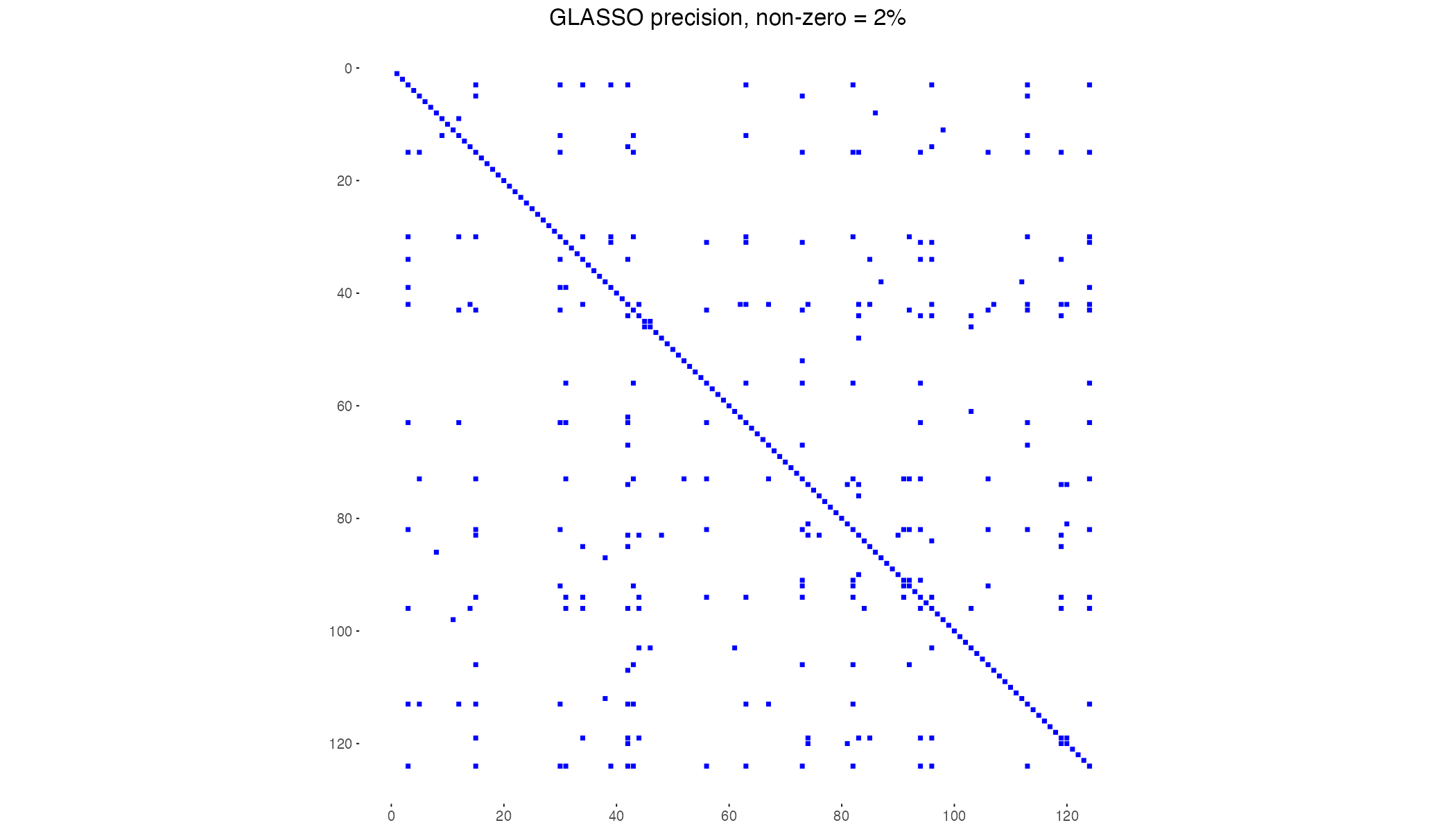}
    \caption{Nonzero pattern for the GLASSO generated precision matrix used in the supplementary simulation study.}
    \label{fig:nonhub_structure}
\end{figure}

We generate independent samples
$X_i \sim \mathcal{N}_p(0,\Sigma)$ where $i=1,\ldots,n,$
with sample sizes $n = 200$, $500$, $1\,000$, and $5\,000$. As in the main simulation study, we compare GLASSO, Cor-GLASSO, SPACE, and PCGLASSO, with hyperparameters selected either by BIC or by a single validation split (Val). For each configuration, we compute RMSE for the full matrix, the diagonal elements, and the nonzero off-diagonal elements, and repeat each experiment $200$ times.

This supplementary experiment assesses whether the methods perform well when the covariance structure is not generated by PCGLASSO. The results are summarized in Tables~\ref{tab:nonhubrmse} and~\ref{tab:nonhubtime}. In this setting, the methods perform more similarly overall, although SPACE often attains the lowest RMSE values for the diagonal entries. PCGLASSO performs best even though the covariance is taken from a regularization path produced by GLASSO. As in the PCGLASSO setting, SPACE is substantially slower than the competing methods.

\begin{longtable}[]{@{}lcccc@{}}
\caption{Computation time (seconds) for each method and sample size.\label{tab:nonhubtime}}\\
\toprule
Method & $n=200$ & $n=500$ & $n=1,000$ & $n=5,000$ \\
\midrule
\endfirsthead

\multicolumn{5}{c}{{\tablename\ \thetable{} -- continued from previous page}} \\
\toprule
Method & $n=200$ & $n=500$ & $n=1,000$ & $n=5,000$ \\
\midrule
\endhead

\midrule \multicolumn{5}{r}{{Continued on next page}} \\
\endfoot

\bottomrule
\endlastfoot

CGL BIC & 2.18 & 1.59 & 1.36 & 0.95 \\
CGL Val & 2.94 & 1.79 & 1.49 & 1.00 \\
GL BIC & 0.96 & 0.82 & 0.79 & 0.58 \\
GL Val & 1.22 & 0.91 & 0.86 & 0.61 \\
pcglassoFast\_Primal BIC & 1.80 & 1.49 & 1.24 & 0.93 \\
pcglassoFast\_Primal Val & 1.91 & 1.50 & 1.25 & 0.90 \\
pcglassoFast\_Dual BIC & 0.86 & 0.69 & 0.58 & \textbf{0.42} \\
pcglassoFast\_Dual Val & \textbf{0.85} & \textbf{0.64} & \textbf{0.55} & 0.44 \\
SPACE BIC & 5.15 & 10.20 & 18.50 & 84.90 \\
SPACE Val & 3.71 & 7.33 & 12.60 & 54.00 \\

\end{longtable}

\begin{longtable}[]{@{}llcccc@{}}
\caption{RMSE summary for each method and sample size.\label{tab:nonhubrmse}}\\
\toprule\noalign{}
Metric & Method & $n=200$ & $n=500$ & $n=1,000$ & $n=5,000$ \\
\midrule\noalign{}
\endfirsthead

\multicolumn{6}{c}{{\tablename\ \thetable{} -- continued from previous page}} \\
\toprule\noalign{}
Metric & Method & $n=200$ & $n=500$ & $n=1,000$ & $n=5,000$ \\
\midrule\noalign{}
\endhead

\midrule \multicolumn{6}{r}{{Continued on next page}} \\
\endfoot

\bottomrule
\endlastfoot

RMSE & CGL BIC & 0.20 & 0.16 & 0.13 & 0.07 \\
 & CGL Val & 0.19 & 0.14 & 0.11 & 0.05 \\
 & GL BIC & 0.21 & 0.20 & 0.20 & 0.16 \\
 & GL Val & 0.22 & 0.19 & 0.17 & 0.09 \\
 & pcglassoFast\_Primal BIC & 0.19 & 0.15 & 0.12 & 0.06 \\
 & pcglassoFast\_Primal Val & \textbf{0.18} & \textbf{0.13} & \textbf{0.10} & \textbf{0.05} \\
 & pcglassoFast\_Dual BIC & 0.19 & 0.15 & 0.12 & 0.06 \\
 & pcglassoFast\_Dual Val & \textbf{0.18} & \textbf{0.13} & \textbf{0.10} & \textbf{0.05} \\
 & SPACE BIC & \textbf{0.18} & 0.14 & 0.11 & 0.05 \\
 & SPACE Val & \textbf{0.18} & \textbf{0.13} & 0.10 & 0.05 \\

\midrule\noalign{}
Diag RMSE & CGL BIC & 1.27 & 0.97 & 0.78 & 0.40 \\
 & CGL Val & 1.35 & 0.92 & 0.69 & 0.33 \\
 & GL BIC & 1.29 & 1.08 & 1.00 & 0.80 \\
 & GL Val & 1.44 & 1.11 & 0.94 & 0.47 \\
 & pcglassoFast\_Primal BIC & 1.20 & 0.84 & 0.62 & 0.25 \\
 & pcglassoFast\_Primal Val & 1.27 & 0.79 & 0.56 & 0.25 \\
 & pcglassoFast\_Dual BIC & 1.20 & 0.84 & 0.61 & 0.25 \\
 & pcglassoFast\_Dual Val & 1.27 & 0.79 & 0.56 & 0.25 \\
 & SPACE BIC & \textbf{1.12} & \textbf{0.72} & \textbf{0.50} & \textbf{0.20} \\
 & SPACE Val & 1.27 & 0.79 & 0.55 & 0.24 \\

\midrule\noalign{}
Off-diag (NZ) & CGL BIC & 1.25 & 1.07 & 0.86 & 0.42 \\
RMSE & CGL Val & 1.07 & 0.82 & 0.65 & 0.33 \\
 & GL BIC & 1.35 & 1.35 & 1.34 & 1.12 \\
 & GL Val & 1.35 & 1.27 & 1.15 & 0.56 \\
 & pcglassoFast\_Primal BIC & 1.21 & 1.04 & 0.83 & 0.40 \\
 & pcglassoFast\_Primal Val & \textbf{1.06} & \textbf{0.80} & \textbf{0.62} & \textbf{0.32} \\
 & pcglassoFast\_Dual BIC & 1.21 & 1.04 & 0.83 & 0.40 \\
 & pcglassoFast\_Dual Val & \textbf{1.06} & \textbf{0.80} & \textbf{0.62} & \textbf{0.32} \\
 & SPACE BIC & 1.16 & 0.97 & 0.79 & 0.39 \\
 & SPACE Val & 1.10 & 0.84 & 0.66 & 0.33 \\

\end{longtable}

\section{Algorithms}\label{appendix:algs}

\subsection{Diagonal Newton Method for \texorpdfstring{$D$}{D} optimization}\label{sec:optD}

We seek a diagonal scaling matrix $D=\mathrm{diag}(d)\succ 0$ that satisfies \eqref{eq:scaling}.
Writing $d=(D_{11},\ldots,D_{pp})^\top\in(0,\infty)^p$, this is equivalent to the strictly convex optimization problem
\[
d^\ast \in \arg\min_{d\in(0,\infty)^p}
\left\{ f(d):=\frac12\, d^\top A d - \sum_{i=1}^p \log d_i \right\}.
\]
 We apply a diagonal Newton step with a backtracking
line search satisfying the Wolfe conditions \cite[Algorithm~3.5]{Nocedal2006}; see Algorithm~\ref{algo:diag_hess}. In Appendix~\ref{appendix_D_optim} we provide proof of convergence and the justification on using the diagonal approximation.

\begin{algorithm}[H]
\caption{Diagonal Newton Method for $D$ Optimization}
\label{algo:diag_hess}
\begin{algorithmic}[1]
\Require $A$: a $p\times p$ symmetric matrix, $k$: maximum number of iterations, $\eta_{\min}$: minimum step-size, $\mathrm{tol}$: objective-drop tolerance
\Ensure $d^\ast$ (approximation) \Procedure{DiagNewtonD}{$A$, $k$, $\eta_{\min}$, $\mathrm{tol}$}
\State Initialize $d \in \mathbb{R}^p_+$, $f_{\text{old}} \gets \infty$
\For{iter $= 1,\ldots,k$}
    \State $g \gets Ad - d^{-1}$ \Comment{Gradient, element-wise inverse}
    \State $h \gets a+d^{-2}$ \Comment{Hessian diagonal, $a=(A_{ii})_i$}
    \State $\Delta \gets g \oslash h$ \Comment{Element-wise division}
    \State Define $\phi(\eta) = f(d - \eta\Delta)$ for $\eta\in [0,\infty)$
    \State $\eta^* \gets \textbf{LineSearch}(\phi,\eta_{\min})$ 
    \State $d \gets d - \eta^*\Delta$
    \State $f_{\text{new}} \gets f(d)$
    \State $f_\delta \gets f_{\text{old}} - f_{\text{new}}$
    \State $f_{\text{old}} \gets f_{\text{new}}$
    \If{$f_\delta < \mathrm{tol}$} \Comment{early-exit test}
        \State \textbf{break} \Comment{tolerance satisfied}
    \EndIf
\EndFor
    \State \Return $d$
\EndProcedure
\end{algorithmic}
\end{algorithm}

\subsection{Coordinate descent algorithm for \texorpdfstring{$R$}{R} optimization - primal problem}\label{sec:optRA}

We solve the constrained optimization problem
\begin{align}\label{eq:r_problem}
\hat{R} \in \argmax_{R\in\Sppone}
\Big\{ \log\det(R) - \tr(RS) - \lambda\|R\|_{1,\mathrm{off}} \Big\},
\end{align}
where $S$ is a positive semidefinite matrix.

Fix an index $i\in\{1,\dots,p\}$ and permute coordinates (if needed) so that $i$ corresponds to the first coordinate.
Partition
\[
R=\begin{pmatrix}1 & r^\top\\ r & R_{11}\end{pmatrix},\qquad
S=\begin{pmatrix}S_{ii} & s^\top\\ s & S_{11}\end{pmatrix},
\qquad s:=S_{-i,i},
\]
where $r\in\mathbb{R}^{p-1}$ collects the off-diagonal entries of column $i$,
and $R_{11}=R_{-i,-i}\in\mathbb{S}^{p-1}_{++}$ is the principal submatrix obtained by removing row/column $i$.
By the block determinant identity,
\[
\det(R)=\det(R_{11})\bigl(1-r^\top R_{11}^{-1}r\bigr),
\]
and by symmetry,
\[
\tr(RS)=\tr(R_{11}S_{11})+S_{ii}+2s^\top r,\qquad
\|R\|_{1,\mathrm{off}}=\|R_{11}\|_{1,\mathrm{off}}+2\|r\|_1.
\]
Hence, for fixed $R_{11}$, maximizing \eqref{eq:r_problem} over $r$ is equivalent (up to an additive constant
and multiplication by $1/2$) to maximizing
\begin{equation}\label{eq:column_objective}
\ell(r)=\frac{1}{2}\log\bigl(1-r^\top R_{11}^{-1}r\bigr)-s^\top r-\lambda\|r\|_1+\text{const.},
\end{equation}
over the feasible set $\{r\in\R^{p-1}\colon 1-r^\top R_{11}^{-1}r>0\}$.

Let $Q:=R_{11}^{-1}$, fix $r_{-j}$ and update the single coordinate $r_j$.
Write the quadratic form as
\[
r^\top Q r = a_j r_j^2 + 2 b_j r_j + c_j,
\]
where
\[
a_j:=Q_{jj},\quad
b_j:=\sum_{k\neq j}Q_{jk}r_k = (Qr)_j - Q_{jj}r_j,\quad
c_j:=\sum_{k\neq j}\sum_{\ell\neq j}Q_{k\ell}r_k r_\ell
      = r^\top Q r - a_j r_j^2 - 2 b_j r_j.
\]
Note that $a_j>0$ and that $b_j,c_j$ depend only on the fixed vector $r_{-j}$ (hence are constants in the
one-dimensional update). Dropping constants, \eqref{eq:column_objective} gives the
scalar optimization problem
\begin{equation}\label{eq:rj}
\hat{r}_j \in \arg\max_{r_j\in K_j}
\left\{
\ell(r_j)=\frac12\log\left(1-a_j r_j^2-2b_j r_j-c_j\right)
- s_j r_j -\lambda |r_j|
\right\},
\end{equation}
where $K_j=\{r_j\colon 1-a_j r_j^2-2b_j r_j-c_j>0\}$ is the feasible interval induced by
positive definiteness.

The explicit solution is given by the following Theorem.
\begin{thm}
\label{thm:element_update}
Let $b,c,s\in\mathbb{R}$ and $a,\lambda>0$. Assume that $c<1+b^2/a$ and let 
\[
K = \{r\colon 1-a r^2-2br-c>0\}.
\] The solution for 
\begin{equation}
\label{eq:rj2}
\hat{r}= \argmax_{r\in K} \left\{ \ell(r) = \frac{1}{2}\log\left(1 - a r^2 - 2b r - c \right) - s r - \lambda |r| \right\},
\end{equation}
equals
\begin{equation}
\label{explicit solution}
   \hat{r}=\begin{cases}
        0 & \hspace{0.3cm} \text{ if } \vert\xi\vert\leq \lambda \mbox{ and }c<1,\\
        -\frac{b}{a}&\hspace{0.3cm}\text{ else if } \zeta=0,
        \\
          -\frac{\tilde{b}}{2\tilde{a}} +\sign(\tilde{a}) \sqrt{(\tilde{b}/2\tilde{a})^2 - (\tilde{c}/\tilde{a})} & \hspace{0.3cm} \text{ else} ,
    \end{cases}
\end{equation}
where 
\begin{equation*}
    \xi = \frac{-b}{1-c} - s, \quad \zeta=s+\lambda_s,\quad\lambda_s = 
    \begin{cases}
        \sign(\xi)\lambda, & \text{ if } c<1\\
        \sign(-b)\lambda, & \text{ if } c\geq1.
    \end{cases} 
\end{equation*}
and the coefficients $\tilde{a},\tilde{b},\tilde{c}$ given by:
\begin{align*}
    \tilde{a} = -\zeta a,\quad \tilde{b} = a - 2\zeta b, \quad    \tilde{c} = \zeta(1-c) + b.
\end{align*}
\end{thm}
\begin{proof}
First, note that
\begin{equation}\label{admissible domain}
K=\left( -\frac{b}{a} - \sqrt{\left(\frac{b}{a}\right)^2 + \frac{1-c}{a}},\ 
          -\frac{b}{a} + \sqrt{\left(\frac{b}{a}\right)^2 + \frac{1-c}{a}} \right).
\end{equation}
By the assumption $c<1+b^2/a$, this interval is non-empty. 

The function $\ell$ is strictly concave on $K$ (as a sum of a strictly concave log-term,
a linear term, and a concave penalty), and $\ell(r)\to -\infty$ as $r$ approaches the boundary of $K$. Hence the maximizer $\hat r$ exists and is unique.

We now determine the sign of the solution $\hat{r}$. In case $\hat{r}\neq 0$, we observe that due to the symmetry of the penalty term $-\lambda\vert r\vert$, it follows that $\sign(\hat{r})=\sign(\bar{r})$, where $\bar{r}$ maximizes the smooth unpenalized problem 
\begin{equation*}
    \ell_0(r)=\frac{1}{2}\log\left(1- a r^2 - 2b r -c  \right) - s r,
\end{equation*}
which is given by setting $\lambda=0$ in (\ref{eq:rj2}). We distinguish between two cases. If $c\geq 1$, the domain of $\ell_0(r)$ does not include $0$, and $\sign(\bar{r})=\sign(-b)$.  If $c<1$, the domain of $\ell_0(r)$ includes $0$, and the sign of the maximum $\bar{r}$ is determined by the derivative of $\ell_0$ at zero. The derivative is
\begin{equation*}
    \ell_0'(r)=\frac{ -ar -b}{1-a r^2 - 2b r -c} - s, \hspace{0.8cm} \ell_0'(0)=\frac{-b}{1-c}-s=\xi,
\end{equation*}
and we obtain that $\sign(\bar{r})=\sign(\xi)$. Moreover, for the original penalized problem, we observe that $\hat{r}=0$ whenever $c<1$ and $\vert \ell_0'(0)\vert=\vert \xi\vert\leq \lambda$. 
In summary,
\begin{equation}\label{eq:signs}
    \sign(\hat{r}) = 
    \begin{cases}
        0, & \text{ if } c<1 \text{ and } |\xi|\leq\lambda,\\
        \sign(\xi), & \text{ if } c<1 \text{ and } |\xi|>\lambda,\\
        \sign(-b), & \text{ if } c\geq1.
    \end{cases} 
\end{equation}

We now turn to the explicit solution when $\hat{r}\neq 0$. The optimum $\hat{r}$ solves
\begin{equation*}
\frac{ a\hat{r} +b}{1-a \hat{r}^2 - 2b \hat{r} -c} + s + \lambda_s=0,
\end{equation*}
where $\lambda_s=  \sign(\hat{r}) \lambda$ is determined by (\ref{eq:signs}). If $\zeta=(s + \lambda_s)=0$, then $a\hat{r}+b=0$, hence $\hat{r}=-b/a$. From now on, we assume that $\zeta=(s + \lambda_s)\neq 0$. Multiplying the above equation by the denominator and collecting the terms yields the quadratic equation
$$
\tilde{a} \hat{r}^2 + \tilde{b} \hat{r} + \tilde{c} = 0,
$$
where $\tilde{a}  = -\zeta a, \tilde{b} = a -2\zeta b, $ and $\tilde{c} =\zeta(1-c)+b $. The roots of the quadratic are
\begin{equation}\label{quadratic}
\hat{r}_{\pm} = -\frac{\tilde{b}}{2\tilde{a}} \pm\sqrt{\left(\frac{\tilde{b}}{2\tilde{a}}\right)^2 - \left(\frac{\tilde{c}}{\tilde{a}}\right)}.
\end{equation}

It remains to argue why $\sign(\tilde{a})$ specifies the correct candidate root in (\ref{quadratic}). 
By expanding (\ref{quadratic}) explicitly and simplifying, we obtain
    \begin{equation*}
        \hat{r}_{\pm}=-\dfrac{b}{a}+\dfrac{1}{2\zeta}\pm\sqrt{\left(\dfrac{b}{a}\right)^2+\dfrac{1-c}{a}+\dfrac{1}{4\zeta^2}}.
    \end{equation*}
We observe that the half-distance between the roots exceeds the half-length of the admissible domain
\eqref{admissible domain}; hence, at most one root can lie in $K$. Moreover, comparing the midpoint of the roots at $-(b/a)+1/(2\zeta)$ with the center of the admissible domain at $-(b/a)$, we see that the correct root is $\hat{r}=\hat{r}_+$ whenever $\zeta<0$ and $\hat{r}=\hat{r}_-$ when $\zeta>0$. Therefore, the root is determined by $\sign(-\zeta)=\sign(\tilde{a})$, which proves (\ref{explicit solution}). 
\end{proof}

\begin{algorithm}[H]
 \caption{Element-wise Coordinate Descent for Updating the Column Vector $r$}\label{alg:updater}
\begin{algorithmic}[1]
\Require $r\in\R^{p-1}$, $Q=R^{-1}_{11}\in\mathbb S^{p-1}_{++}$, $s\in\R^{p-1}$, $\lambda\in[0,\infty)$, $\tau$: convergence threshold
\Ensure Updated $r$ solving \eqref{eq:rj} approximately
\Procedure{UpdateColumn}{$r$, $Q$, $s$, $\lambda$, $\tau$}
    \Repeat
        \State $B \gets Qr,\quad c_0 \gets r^\top B$
        \State $\ell_{\mathrm{old}} \gets \frac12\log(1-c_0) - s^\top r - 2\lambda\|r\|_1$
        \For{$j=1,\ldots,p-1$}
            \State $a \gets Q_{jj},\quad b \gets B_j-a r_j,\quad c \gets c_0-a r_j^2-2 b r_j$
            \State $r^{\mathrm{new}}\gets \textsc{ElemUpdate}(a,b,c,s_j,\lambda)$ \Comment{Theorem~\ref{thm:element_update}}
            \State $\delta \gets r^{\mathrm{new}}-r_j$
            \If{$\delta\neq 0$}
                \State $r_j \gets r^{\mathrm{new}}$
                \State $c_0 \gets c_0 + 2\delta B_j + \delta^2 a$
                \State $B \gets B + \delta \, Q_{\cdot j}$
            \EndIf
        \EndFor
        \State $\ell_{\mathrm{new}} \gets \frac12\log(1-c_0) - s^\top r - 2\lambda\|r\|_1$
        \State $\Delta \ell \gets \ell_{\mathrm{new}}-\ell_{\mathrm{old}}$
    \Until{$\Delta \ell \le \tau$}
    \State \Return $r$
\EndProcedure
\end{algorithmic}
\end{algorithm}

\begin{algorithm}[H]
\caption{Coordinate Descent Algorithm for solving \eqref{eq:r_problem}}\label{alg:updateR}
\begin{algorithmic}[1]
\Require $R\in\Sppone$ and $W=R^{-1}$: initial iterates (warm start), $S$: a positive semidefinite matrix, $\lambda\in[0,\infty)$: tuning parameter, $\tau_{inner},\tau_{outer}$: convergence threshold
\Ensure Optimal $R$ and $W=R^{-1}$ from \eqref{eq:r_problem} 
\Procedure{pcglassoFast\_Primal}{$R$, $W$, $S$, $\lambda$, $\tau_{inner}$,$\tau_{outer}$}
    \State $\mathrm{obj} \gets \log\det(R) - \tr(RS) - \lambda \| R\|_{1,\mathrm{off}}$
    \Repeat
        \State $\mathrm{obj}_{\mathrm{old}} \gets \mathrm{obj}$
        \For{$i = 1$ \textbf{to} $p$}
            \State $r_{\mathrm{old}} \gets R_{-i,i}$ \Comment{current off-diagonal column $i$ (length $p-1$)}
            \State $Q \gets W_{-i,-i} - \frac{1}{W_{ii}}\, W_{-i,i} W_{i,-i}$ \Comment{$Q = R^{-1}_{11}$}
            \State $\mathrm{obj} \gets \mathrm{obj} + 2\, r_{\mathrm{old}}^\top S_{-i,i} + 2\lambda \|r_{\mathrm{old}}\|_1$
            \Comment{remove old column-$i$ terms}
            \State $r \gets {\textsc{UpdateColumn}}(r_{\mathrm{old}}, Q, S_{-i,i}, \lambda, \tau_{inner})$
            \State{$c_{\mathrm{old}}\gets r_{\mathrm{old}}^\top Q r_{\mathrm{old}},\quad c_{\mathrm{new}}\gets r^\top Qr$}
            \State $\mathrm{obj} \gets \mathrm{obj} - \log(1-c_{\mathrm{old}}) + \log(1-c_{\mathrm{new}})
                                    - 2\, r^\top S_{-i,i} - 2\lambda \|r\|_1$
            \Comment{update $\log\det$ and trace/penalty contributions}

            \State $R_{-i,i} \gets r,\quad R_{i,-i} \gets r^\top$ \Comment{enforce symmetry}

            \State $\beta \gets Q r$
            \State $\mathrm{Schur} \gets (1 - r^\top\beta)^{-1}$ \Comment{$\mathrm{Schur}=W_{ii}$}
            \State $W_{-i,-i} \gets Q + \mathrm{Schur}\cdot \beta\beta^\top$
            \State $W_{-i,i} \gets -\mathrm{Schur}\cdot \beta$
            \State $W_{i,-i} \gets W_{-i,i}^\top$
            \State $W_{ii} \gets \mathrm{Schur}$
        \EndFor
    \Until $|\mathrm{obj}-\mathrm{obj}_{\mathrm{old}}| < \tau_{outer}$
    \State \Return $(R,W,\mathrm{obj})$
\EndProcedure
\end{algorithmic}
\end{algorithm}

\subsection{Coordinate descent algorithm for \texorpdfstring{$R$}{R} optimization - dual problem}\label{sec:optRC}

Assume that $S$ is positive semidefinite. In our block coordinate descent algorithm, the subproblem for updating $R$ involves solving \eqref{eq:prime} below, where the matrix $S$ is given by $S=\hat{D}C\hat{D}$. 

We begin by considering the original GLASSO optimization problem with a general penalty $\lambda_{ij}=\lambda_{ji}\geq0$):
\[
\hat{K} = \argmin_{K\in\Spp} \left\{ -\log\det(K)+\tr(K S)+\sum_{i,j} \lambda_{ij} |K_{ij}|\right\}. 
\]
Because the $\ell_1$ regularization term is non-smooth, direct optimization is challenging. Consequently, many methods instead focus on the dual formulation:
\[
\hat{K}^{-1} = \argmax_{W\in\Spp}\{\log\det(W)\colon |W_{ij}-S_{ij}|\leq \lambda_{ij} \,\, \forall\,i,j\}.
\]
In \cite{Banerjee}, a block-coordinate descent method was proposed to solve this dual problem by iteratively updating one column and the corresponding row of $W$. They showed that each column-subproblem can be reformulated as a LASSO regression, which \cite{friedman2008sparse} later solved efficiently using coordinate descent.

Analogously, we consider the dual problem corresponding to the following $R$-op\-ti\-mi\-za\-tion:
\begin{align}\label{eq:prime}
\hat{R} = \argmin_{R\in\Sppone} \left\{ -\log\det(R)+\tr(R S)+\lambda \|R\|_{1,\mathrm{off}}\right\}.
\end{align}
The dual is given by the following lemma.
\begin{lemma}\label{lem:dual}
   The dual of \eqref{eq:prime} is 
\[
\hat{R}^{-1} = \argmax_{W\in\Spp}\left\{ \log\det(W)-\tr(W)\colon |W_{ij}-S_{ij}|\leq \lambda\, \,\forall\,i\neq j \right\}. 
\]
\end{lemma}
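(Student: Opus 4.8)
The plan is to derive the dual by the standard two-step route used for the graphical lasso, adapted to the extra unit-diagonal constraint. First I would encode the non-smooth penalty variationally, using $\lambda\|R\|_{1,\mathrm{off}}=\max\{\tr(UR)\colon U\in\Szero,\ \|U\|_\infty\le\lambda\}$ (group the unordered pairs $\{i,j\}$ and optimise each $U_{ij}$), and handle the constraint $R\in\Sppone$ by keeping $R\succ0$ as the open domain of the $-\log\det$ barrier while attaching a Lagrange multiplier $\nu\in\R^p$ to the affine constraints $\diag{R}=I$. Since $R=I$ lies in the relative interior of the domain and satisfies all equality constraints, Slater's condition holds and strong duality applies, so the optimal value of \eqref{eq:prime} equals $\max_{\nu}\min_{R\succ0}L(R,\nu)$, where $L(R,\nu)=-\log\det R+\tr\bigl(R(S+\diag{\nu})\bigr)+\lambda\|R\|_{1,\mathrm{off}}-\tr(\diag{\nu})$.

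Next I would insert the variational formula for the penalty into $\min_{R\succ0}L(R,\nu)$ and interchange the inner $\min_{R\succ0}$ with the $\max$ over $U$. The function $(R,U)\mapsto-\log\det R+\tr\bigl(R(S+\diag{\nu}+U)\bigr)$ is convex and lower semicontinuous in $R$ on $\Spp$ and linear in $U$, and $U$ ranges over the compact convex box $\{U\in\Szero\colon\|U\|_\infty\le\lambda\}$, so Sion's minimax theorem permits the swap despite $\Spp$ being non-compact. For fixed $M=S+\diag{\nu}+U$ one has $\min_{R\succ0}\{-\log\det R+\tr(RM)\}=p+\log\det M$ when $M\succ0$, attained at $R=M^{-1}$, and $-\infty$ otherwise. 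Collecting terms, the dual becomes $\max\bigl\{-\tr(\diag{\nu})+p+\log\det(S+\diag{\nu}+U)\bigr\}$ over $\nu\in\R^p$, $U\in\Szero$, $\|U\|_\infty\le\lambda$, $S+\diag{\nu}+U\succ0$.

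Finally I would reparametrise by $W=S+\diag{\nu}+U$. As $\nu$ sweeps $\R^p$ the diagonal $\diag{W}$ is free, while for $i\neq j$ we get $|W_{ij}-S_{ij}|=|U_{ij}|\le\lambda$, and conversely every $W\succ0$ obeying these constraints is represented; moreover $-\tr(\diag{\nu})=\tr(S)-\tr(W)$. Hence the dual value equals $p+\tr(S)+\max\{\log\det W-\tr(W)\colon W\succ0,\ |W_{ij}-S_{ij}|\le\lambda\ \forall i\neq j\}$, which up to the additive constant $p+\tr(S)$ is precisely the asserted problem. For the identification $\hat R^{-1}=\hat W$: the dual objective $\log\det W-\tr(W)$ is strictly concave on $\Spp$ with superlevel sets that are compact subsets of $\Spp$ (it tends to $-\infty$ both at the boundary of $\Spp$ and as $\|W\|\to\infty$), so the maximiser $\hat W$ exists and is unique; strong duality yields a saddle point whose inner minimiser over $R$ is $\hat W^{-1}$, and uniqueness of the primal minimiser (strict convexity of $-\log\det$) forces $\hat R=\hat W^{-1}$.

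\textbf{Main obstacle.} The two delicate analytic points are verifying strong duality rigorously for this log-det problem on the open cone with affine constraints (the Slater argument via $R=I$), and justifying the minimax interchange by Sion's theorem over a non-compact primal domain $\Spp$, where the $-\log\det R$ barrier supplies the lower semicontinuity/closedness needed. The remaining steps — the dual-norm formula for $\|R\|_{1,\mathrm{off}}$, the closed form of $\min_{R\succ0}\{-\log\det R+\tr(RM)\}$, and the change of variables $W=S+\diag{\nu}+U$ with the bookkeeping of the additive constant — are routine.
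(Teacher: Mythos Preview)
Your proposal is correct and follows essentially the same route as the paper: represent $\lambda\|R\|_{1,\mathrm{off}}$ variationally via its dual norm, attach Lagrange multipliers to the affine constraints $R_{ii}=1$, optimise out $R$ using the closed form of the log-det conjugate, and change variables to $W=S+\diag{\nu}+U$. The paper packages the off-diagonal dual variables and the diagonal multipliers into a single matrix $Z$ (so $W=S+Z$) and proceeds slightly more informally, whereas you keep $U$ and $\nu$ separate and are more explicit about the analytic justifications (Slater for strong duality, Sion for the minimax swap, coercivity for existence of $\hat W$); these are refinements rather than a different argument.
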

Following the approach in \cite{Banerjee}, we note that updating a single column of $W$ can also be reduced to a LASSO regression. This observation motivates an iterative algorithm that updates one column (and its corresponding row) of $W$ at a time.

To illustrate the update step, partition $W$ and $S$ as follows:
\[
W = \begin{pmatrix}
    W_{11} & w_{12} \\ w_{12}^\top & w_{22}
\end{pmatrix}\quad\mbox{and}\quad S = \begin{pmatrix}
    S_{11} & s_{12} \\ s_{12}^\top & s_{22}
\end{pmatrix},
\]
where $w_{12}\in\R^{p-1}$ comprises the off-diagonal elements for the column under update and $w_{22}\in\R$ is its diagonal element.
Using the Schur complement, the objective can be decomposed as
\[
\log\det(W) -\tr(W)=  \log\det(W_{11})+\log(w_{22}-w_{12}^\top W_{11}^{-1} w_{12}) -\tr(W_{11})-w_{22}. 
\]
To update $w_{12}$, we solve
$w_{12} = \argmin_{y\in \R^{p-1}} \{ y^\top W_{11}^{-1} y\colon \|y-s_{12}\|_\infty\leq \lambda \}$,
which mirrors the standard GLASSO update. As shown in \cite{Banerjee}, this problem is equivalent to the LASSO regression:
\[
\hat{\beta}:=W_{11}^{-1} w_{12} = \argmin_{\beta\in\R^{p-1}} \left\{ \frac12 \| W_{11}^{1/2} \beta - W_{11}^{-1/2}s_{12}\|_2^2 + \lambda \|\beta\|_1\right\}.
\]
We solve the above using coordinate descent.

Once $\hat{\beta}$ (and hence $w_{12}$) is obtained, the diagonal element $w_{22}$ is updated as 
\begin{align*}
w_{22} &= \argmax_{d\in\R}\left\{ \log(d-w_{12}^\top W_{11}^{-1} w_{12}) -d\right\}= 1+w_{12}^\top W_{11}^{-1}w_{12} = 1+w_{12}^\top\hat{\beta}.
\end{align*}
This update ensures that the corresponding diagonal entry of $R=W^{-1}$ equals exactly $1$. 
Finally, using the identity
\[
 \begin{pmatrix}
    W_{11} & w_{12} \\ w_{12}^\top & w_{22}
\end{pmatrix}  \begin{pmatrix}
    R_{11} & r_{12} \\ r_{12}^\top & r_{22}
\end{pmatrix} = \begin{pmatrix}
    I_{p-1} & 0 \\ 0^\top & 1
\end{pmatrix},
\]
one obtains $W_{11}r_{12}+w_{12} r_{22}=0\in\R^{p-1}$.
Since $r_{22}=1$, it follows that
$r_{12}=-W_{11}^{-1}w_{12}=-\hat{\beta}$. 

The following algorithm and the corresponding Fortran implementation is a minor adaptation of the \texttt{glassoFast} algorithm of \cite{glassoFAST}. The modifications are: (i) a new pre-processing step in line 2, (ii) PCGLASSO‑specific updates in lines 23–25, and (iii) a new post‑processing block in lines 27–30.
Up to line 26 of the pseudo-code, the off-diagonal entries of the $j$th column of $R$ (denoted by $R_{\cdot j}$) contain the corresponding $\hat{\beta}$ vector.
Recall the soft-threshold function 
$\mathrm{soft}(x,\lambda) = \sign(x)(|x|-\lambda)_+$.

\begin{algorithm}[H]
\caption{Coordinate Descent Algorithm for solving \eqref{eq:prime}; An adaptation of the glassoFast algorithm by \cite{glassoFAST}}\label{alg:R}
\begin{algorithmic}[1]
\Require $S$: a $p\times p$ positive semidefinite matrix, $\lambda\in[0,\infty)$: tuning parameter, $\tau$: convergence threshold\Comment{Input}
\Ensure Optimal $R$ and $W=R^{-1}$ from \eqref{eq:prime}\Comment{Output}
\Procedure{pcglassoFast\_Dual}{$S$, $\lambda$, $\tau$}
\State Initialize $R \gets 0\in\R^{p\times p}$, $W\gets I_p$ 
\Repeat
    \State $\Delta_{\max} \gets 0$
    \For{$j = 1,\ldots,p$}
        \State $v \gets W R\, e_j$ \Comment{Compute the $j$th column of $W R$}
        \Repeat
            \State $\delta_{\max} \gets 0$
            \For{$i=1,\ldots,p$} 
                \If{$i\neq j$} \Comment{LASSO update}
                    \State $c\gets\mathrm{soft}(S_{ij}-v_i+W_{ii} R_{ij}, \lambda)/W_{ii}$\Comment{Apply soft-threshold}
                    \State $\delta\gets c-R_{ij}$
                    \If{$\delta\neq0$}
                        \State $R_{ij} \gets c$
                        \State $v\gets v+\delta\cdot W_{\cdot i}$\Comment{$W_{\cdot i}$ is the $i$th column of $W$}
                        \State $\delta_{\max} \gets \max\{\delta_{\max}, |\delta|\}$
                    \EndIf
                \EndIf
            \EndFor
        \Until $\delta_{\max}\cdot p < \tau$ \Comment{LASSO convergence test}
        \State $\Delta_{\max} \gets \max\{\Delta_{\max}, \|W_{\cdot j}-v\|_1\}$
        \State $W_{\cdot j}\gets v$, $W_{j \cdot}\gets v^\top$\Comment{Update $j$th column and $j$th row of $W$}
        \State $\Delta_{\max} \gets \max\{\Delta_{\max}, |1+W_{\cdot j}^\top R_{\cdot j} - W_{jj}|\}$
        \State $W_{jj}\gets 1+W_{\cdot j}^\top R_{\cdot j}$ \Comment{Update the diagonal of $W$}
    \EndFor
\Until $\Delta_{\max} < \tau$ \Comment{Convergence test}
\State $R\gets -R$
\For{$i=1,\ldots,p$}
    \State $R_{ii}\gets 1$
\EndFor
\State \Return $(R,W)$
\EndProcedure
\end{algorithmic}
\end{algorithm}
For a warm‑start initialization, substitute line 2 of Algorithm \ref{alg:R} with\\
2: $R \gets -R_0, \diag{R} \gets 0$, $W\gets W_0$.

\section{Justification for the diagonal Hessian approximation}\label{appendix_D_optim}

In Section~\ref{sec:algD}, we presented the optimization scheme for estimating $D$ given $R$. As the underlying problem is convex, employing a standard Newton-Raphson algorithm is suitable (iteration with Equation~(\ref{eq:NR})). However, given the computational cost of each iteration of Newton's method, we considered a diagonal Hessian approximation as a potential simplification.

To assess the practical usefulness of this approximation, we implemented both the exact Newton method and its diagonal version and compared their runtimes empirically. Figure~\ref{fig:appendix_diag} reports the average runtime of the $D$-update as a function of the dimension $p$, together with 95\% confidence intervals. The experiment was carried out on subproblems derived from the \texttt{stockdata} dataset from the R package \texttt{huge}; implementation details are available in our code repository.

\begin{figure}[h]
    \centering
    \includegraphics[width=0.9\linewidth]{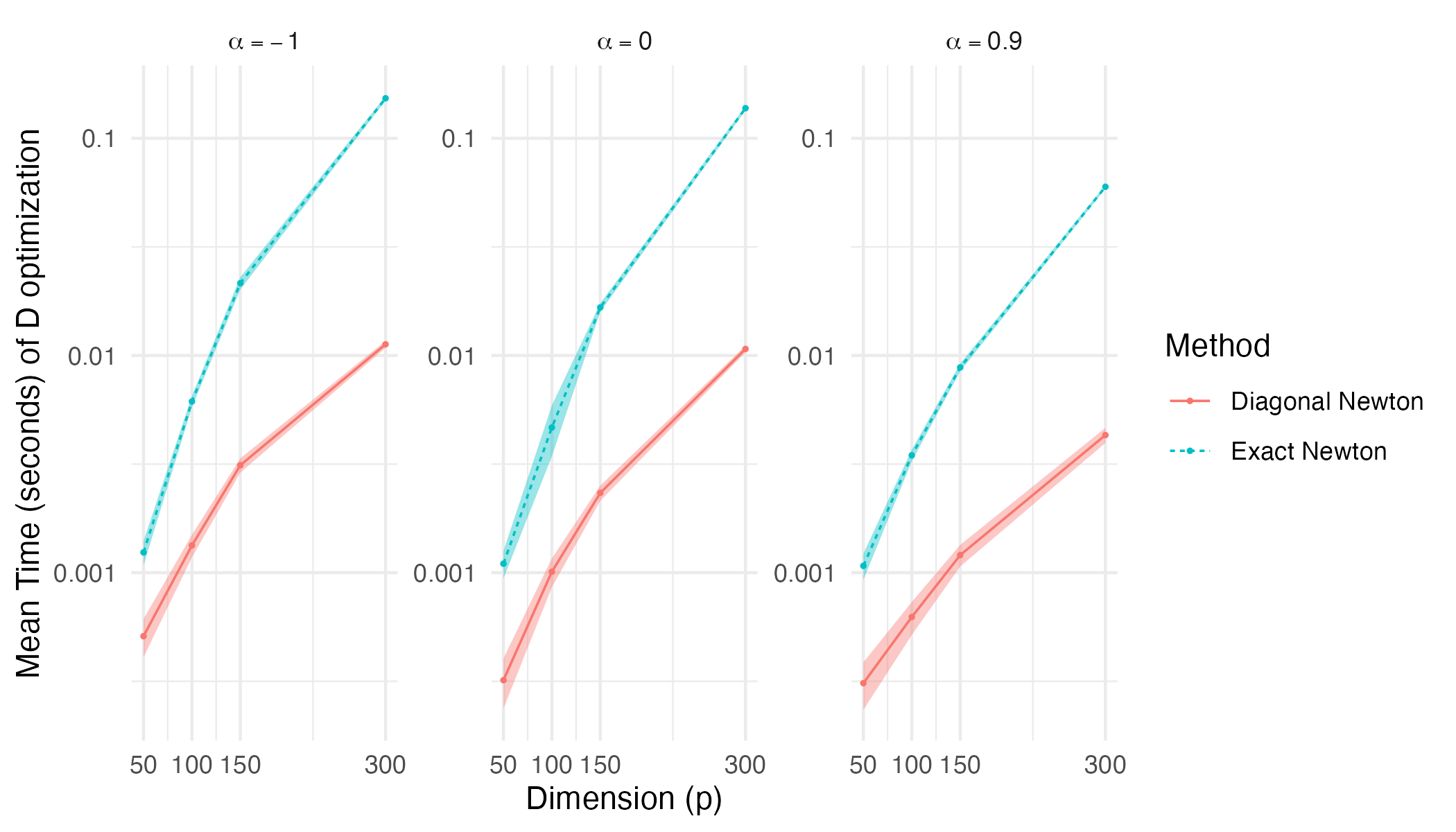}
    \caption{Mean runtime comparison for optimizing $D$ given $R$ between the diagonal Newton approximation (red solid line) and the exact Newton algorithm (blue dashed line). Shaded areas represent the 95\% confidence intervals for the mean runtime.}
    \label{fig:appendix_diag}
\end{figure}

The results clearly show that the diagonal Hessian approximation substantially reduces computation time. For the largest dimensions considered, it is about ten times faster than the exact Newton method. This provides strong empirical support for using the diagonal approximation in practice.

Moreover, the following classical result from numerical optimization guarantees the convergence of the diagonal Newton approximation in our setting. The theorem is stated and proven, for instance, in \cite[Theorem 3.2 and Eq. (3.20)]{Nocedal2006}.

\begin{thm}\label{thm:Nocedal}
Let $f\colon \mathbb{R}^p \to \mathbb{R}$ be a function that is bounded below and continuously differentiable on an open set $\mathcal{N}$ containing the level set $\mathcal{L} = \{ d \in \mathbb{R}^p \colon f(d) \leq f(d^0) \}$, where $d^0$ is the initial point of the iteration. Consider the iterative scheme $d^{k+1} = d^k + \alpha_k p_k$, where $\alpha_k$ satisfies the Wolfe conditions and $p_k = -B_k^{-1} \nabla f(d^k)$ for some symmetric and positive definite matrices $B_k$. Assume the following:
\begin{enumerate}
    \item The condition numbers of $B_k$ are uniformly bounded, i.e., there exists a constant $M \in (0,\infty)$ such that for all $k \geq 0$, $\kappa(B_k) =  \frac{\lambda_{\max}(B_k)}{\lambda_{\min}(B_k)} \leq M$, where $\lambda_{\max}(B_k)$ and $\lambda_{\min}(B_k)$ are maximum and minimum eigenvalues of $B_k$.
    \item The gradient $\nabla f$ is Lipschitz continuous on $\mathcal{N}$.
\end{enumerate}

Then,
\[\lim_{k \to \infty} \| \nabla f(d^k) \| = 0.\]
\end{thm}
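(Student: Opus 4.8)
The plan is to run the classical Zoutendijk argument for line-search methods. The goal decomposes into two pieces: first, derive \emph{Zoutendijk's condition} $\sum_{k\geq0}\cos^2\theta_k\,\|\nabla f(d^k)\|^2<\infty$, where $\theta_k\in[0,\pi/2)$ is the angle between the search direction $p_k$ and the steepest-descent direction $-\nabla f(d^k)$; second, use the uniform bound on the condition numbers $\kappa(B_k)$ to show that $\cos\theta_k$ is bounded away from $0$, which forces $\|\nabla f(d^k)\|^2\to0$.

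For Zoutendijk's condition, write $g_k=\nabla f(d^k)$ and note that $p_k=-B_k^{-1}g_k$ satisfies $g_k^\top p_k=-g_k^\top B_k^{-1}g_k<0$ whenever $g_k\neq0$ (if $g_k=0$ the conclusion is immediate), so $p_k$ is a descent direction; together with the Armijo part of the Wolfe conditions this makes $(f(d^k))_k$ nonincreasing, hence every iterate stays in $\mathcal{L}$ and each segment $[d^k,d^{k+1}]$ stays in $\mathcal{N}$, where $\nabla f$ is $L$-Lipschitz. The curvature condition $g_{k+1}^\top p_k\geq c_2\,g_k^\top p_k$ rewrites as $(g_{k+1}-g_k)^\top p_k\geq(c_2-1)g_k^\top p_k$, while Lipschitz continuity bounds the left-hand side by $L\alpha_k\|p_k\|^2$; this yields the step-size lower bound $\alpha_k\geq\frac{1-c_2}{L}\,\frac{-g_k^\top p_k}{\|p_k\|^2}$. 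Plugging it into the sufficient-decrease inequality $f(d^{k+1})\leq f(d^k)+c_1\alpha_k\,g_k^\top p_k$ gives $f(d^k)-f(d^{k+1})\geq\frac{c_1(1-c_2)}{L}\,\frac{(g_k^\top p_k)^2}{\|p_k\|^2}=c\,\cos^2\theta_k\,\|g_k\|^2$ with $c>0$. Summing over $k$ and using that $f$ is bounded below, so that the telescoping sum $\sum_k(f(d^k)-f(d^{k+1}))$ is finite, gives Zoutendijk's condition and in particular $\cos^2\theta_k\,\|g_k\|^2\to0$.

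It remains to bound $\cos\theta_k$ from below. Since $p_k=-B_k^{-1}g_k$ with $B_k$ symmetric positive definite, $\cos\theta_k=g_k^\top B_k^{-1}g_k/(\|g_k\|\,\|B_k^{-1}g_k\|)$; using $g_k^\top B_k^{-1}g_k\geq\lambda_{\max}(B_k)^{-1}\|g_k\|^2$ and $\|B_k^{-1}g_k\|\leq\lambda_{\min}(B_k)^{-1}\|g_k\|$, we obtain $\cos\theta_k\geq\lambda_{\min}(B_k)/\lambda_{\max}(B_k)=1/\kappa(B_k)\geq1/M$. Hence $\|g_k\|^2\leq M^2\cos^2\theta_k\|g_k\|^2\to0$, that is, $\lim_k\|\nabla f(d^k)\|=0$. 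I expect no genuine obstacle here, since this is the textbook result cited as \cite[Theorem~3.2]{Nocedal2006}; the only point needing a word of care is the claim that the line search operates entirely inside $\mathcal{N}$ so the Lipschitz estimate applies along each segment $[d^k,d^{k+1}]$ — this is precisely why the hypothesis requires $\nabla f$ Lipschitz on an open set $\mathcal{N}\supseteq\mathcal{L}$ rather than merely on $\mathcal{L}$ — together with the routine bookkeeping that turns the two Wolfe inequalities into the single descent estimate.
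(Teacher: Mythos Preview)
Your proposal is correct and is exactly the classical Zoutendijk argument from \cite[Theorem~3.2]{Nocedal2006} combined with the condition-number lower bound on $\cos\theta_k$ from \cite[Eq.~(3.20)]{Nocedal2006}. The paper does not give its own proof of this theorem but simply cites these references, so your write-up in fact supplies the argument the paper defers to.
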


Note that, in general, the result guarantees convergence to a stationary point, which could be a saddle point, but in our case the optimization problem is convex, so convergence is to the global optimum. Note also that the proof of Theorem \ref{thm:Nocedal} never uses the values of $f$ outside the set $\mathcal{N}$ and therefore it can be generalized to any $f$ defined on a subset of $\R^p$.

Let us now verify that the assumptions of Theorem \ref{thm:Nocedal} are satisfied in our setting. The function $f\colon (0,\infty)^p\to\R$ given by $f(d) = \frac12 d^\top Ad-\sum_{i=1}^p \log(d_i)$ is bounded below and continuously differentiable. Fix $d^0\in (0,\infty)^p$ and define an open set $\mathcal{N} = \{ d \in (0,\infty)^p\colon f(d) < f(d^0)+1\}$ so that $\mathcal{L} = \{ d \in (0,\infty)^p\colon f(d) \leq f(d^0) \}\subset\mathcal{N}$. Our line search enforces the Wolfe conditions.

It is left to check the assumptions (1) and (2). By coercivity of $f$, the closure $\overline{\mathcal{N}}$ is compact so that there exist $\varepsilon\in(0,1)$ such that $\mathcal{L} \subset \mathcal{N}\subset \overline{\mathcal{N}} \subset [\varepsilon, \varepsilon^{-1}]^p$. Note that for every $k\in\mathbb{N},\ d^{k}\in\mathcal{L}$. In our case, we have $B_k = \diag{d^{k}}^{-2}+\frac{1}{1-\alpha}I_p$ and it is easy to see we have 
\[
\kappa(B_k)  \leq \max_{d\in\mathcal{L}} \left\{\kappa\left( \diag{d}^{-2}+\frac{1}{1-\alpha}I_p\right)\right\} \leq 1 + \varepsilon^{-2}(1-\alpha) \eqqcolon M < \infty. 
\]
It remains to show that the gradient $\nabla f(d)$ is Lipschitz continuous on $\mathcal{N}$. It follows from the fact that its Jacobian is bounded. The Jacobian of $\nabla f(d)$ is the Hessian $\nabla^2 f(d) = \diag{d}^{-2}+A$. We have:
\[
\|\nabla^2 f(d)\|_2 = \|\diag{d}^{-2}+A\|_2 \leq \|\diag{d}^{-2}\|_2 + \|A\|_2 \leq \frac{1}{\varepsilon^2} + \|A\|_2,
\]
which establishes the result.

\section{Proofs}\label{sec:proofs}

\subsection{Proof of Theorem \ref{thm:Dregion}}

We start with a simple result that will be used in the proof of Theorem~\ref{thm:Dregion}.  
\begin{lemma}\label{lem:Lmin}
Assume that $A$ and $B$ are are positive semidefinite. Then,
\[
\lambda_{\min}(A\odot B) \geq \max\{ \lambda_{\min}(A),\, \lambda_{\min}(B)\}.
\]
\end{lemma}
\begin{proof}[Proof of Lemma~\ref{lem:Lmin}]
Since $A$ and $B$ are positive semidefinite, their smallest eigenvalues, $\lambda_{\min}(A)$ and $\lambda_{\min}(B)$, are nonnegative.  Define
$\alpha = -\lambda_{\min}(A)$ and $\beta = -\lambda_{\min}(B)$.
Then, the matrices
\[
A+\alpha I_p \quad \text{and} \quad B+\beta I_p
\]
are positive semidefinite. By the Schur product theorem, \cite[Section 7.5]{Horn13}, the Hadamard product
\[
(A+\alpha I_p)\odot (B+\beta I_p)
\]
is also positive semidefinite. Moreover, since $A$ and $B$ have unit diagonals, we have
\[
(A+\alpha I_p)\odot (B+\beta I_p) = A\odot B + \Bigl((1+\alpha)(1+\beta)-1\Bigr) I_p.
\]
Because the above matrix is positive semidefinite, its smallest eigenvalue is nonnegative. Therefore,
\[
\lambda_{\min}\Bigl(A\odot B + \Bigl((1+\alpha)(1+\beta)-1\Bigr) I_p\Bigr) \ge 0,
\]
which implies
\[
\lambda_{\min}(A\odot B) \ge 1 - (1+\alpha)(1+\beta).
\]
Expanding the right-hand side yields
\[
1 - (1+\alpha)(1+\beta) = 1 - (1+\alpha+\beta+\alpha\beta) = -\alpha - \beta - \alpha\beta.
\]
Substituting back $\alpha = -\lambda_{\min}(A)$ and $\beta = -\lambda_{\min}(B)$, we obtain
\[
\lambda_{\min}(A\odot B) \ge \lambda_{\min}(A) + \lambda_{\min}(B) - \lambda_{\min}(A)\lambda_{\min}(B).
\]
Since both $A$ and $B$ have unit diagonals, it follows that $\lambda_{\min}(A) \le 1$ and $\lambda_{\min}(B) \le 1$. Consequently,
\[
\lambda_{\min}(A) + \lambda_{\min}(B) - \lambda_{\min}(A)\lambda_{\min}(B) \ge \max\{\lambda_{\min}(A),\,\lambda_{\min}(B)\}.
\]
This completes the proof.
\end{proof}

\begin{lemma}\label{lem:A>0}
    For any $\alpha<1$, $R\in\Sppone$ and correlation matrix $C$, the matrix $A=(R\odot C)/(1-\alpha)$ is positive definite and 
    \[
    \lambda_{\min}(A) \geq \frac{\lambda_{\min}(C)}{1-\alpha}. 
    \]
\end{lemma}
\begin{proof}[Proof of Lemma~\ref{lem:A>0}]
Since $R$ is positive definite and $C$ is positive semidefinite, the matrix $
A = \frac{1}{1-\alpha}\,R\odot C$
is positive definite. Indeed, it is well known that the Hadamard product of two positive semidefinite matrices is itself positive semidefinite. Therefore, it suffices to show that \(R\odot C\) is nonsingular. By Oppenheim's inequality (see \cite{Opp30}), we have
\[
\det(R\odot C) \ge \det(R) \left(\prod_{i=1}^p C_{ii}\right) = \det(R) > 0.
\]
The inequality for $\lambda_{\min}(A)$ follows directly from Lemma~\ref{lem:Lmin}. 
\end{proof}

The following result is proved in \cite{Kal92}.
\begin{lemma}\label{lem:D}
Assume that $A$ is positive semidefinite. Then, there exists a solution to \eqref{eq:scaling} if and only if 
\[
\mu(A) = \min_{y\in[0,\infty)^p\setminus\{0\}} \left\{ \frac{y^\top  A y}{y^\top y} \right\} > 0.  
\]
If $\mu(A)>0$, then the solution is unique and satisfies
\begin{align}\label{eq:ineqD2}
\tr(D^2) \leq \frac{p}{\mu(A)}.
\end{align}
\end{lemma}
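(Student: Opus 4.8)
The plan is to recognize the scaling equation \eqref{eq:scaling} as the stationarity condition of the strictly convex barrier $f(d)=\tfrac12 d^\top A d-\sum_{i=1}^p\log d_i$ on the open orthant $(0,\infty)^p$, and to read off all three claims from the geometry of $f$. Since \eqref{eq:scaling} is equivalent to $Ad=d^{-1}$ and $\nabla f(d)=Ad-d^{-1}$, solutions of \eqref{eq:scaling} are exactly the stationary points of $f$. The Hessian $\nabla^2 f(d)=A+\diag{d^{-2}}$ is positive definite for every $d>0$ (as $A\succeq 0$ and $\diag{d^{-2}}\succ 0$), so $f$ is strictly convex; consequently it has at most one stationary point, and any stationary point is automatically the unique global minimizer. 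This settles uniqueness and reduces the existence question to whether the infimum of $f$ is attained.

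For the implication $\mu(A)>0\Rightarrow$ existence, I would show that $f$ has compact sublevel sets inside $(0,\infty)^p$. Using $d\ge 0$ and the definition of $\mu(A)$ one has $d^\top A d\ge \mu(A)\|d\|^2$, while the elementary bound $\log x\le \varepsilon x^2+C_\varepsilon$ gives $\sum_i\log d_i\le \varepsilon\|d\|^2+pC_\varepsilon$; choosing $\varepsilon=\mu(A)/4$ yields $f(d)\ge \tfrac{\mu(A)}{4}\|d\|^2-pC$, so $f$ is coercive and every sublevel set is bounded. A sublevel set also stays away from the boundary: on $\{f\le c\}$ the coordinates are bounded by some $M$, and from $-\sum_j\log d_j\le c$ (using $\tfrac12 d^\top A d\ge 0$) together with $\log d_j\le\log M$ one deduces a uniform positive lower bound on each $d_i$. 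Hence the sublevel sets are compact subsets of $(0,\infty)^p$, the continuous $f$ attains its minimum, and the minimizer solves \eqref{eq:scaling}.

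For the converse, note that $A\succeq 0$ and $y\ge 0$ force $y^\top A y\ge 0$, so $\mu(A)\ge 0$ always, and the minimum defining $\mu(A)$ is attained on the compact set $\{y\ge 0:\|y\|=1\}$. If $\mu(A)=0$, pick a minimizer $y^\ast\ge 0$, $y^\ast\ne 0$, with $(y^\ast)^\top A y^\ast=0$; since $A$ is positive semidefinite this forces $Ay^\ast=0$. Were a solution $d^\ast>0$ of $Ad^\ast=(d^\ast)^{-1}$ to exist, pairing this identity with $y^\ast$ would give $0=(y^\ast)^\top Ad^\ast=(y^\ast)^\top (d^\ast)^{-1}=\sum_i y^\ast_i/d^\ast_i>0$, a contradiction. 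Thus no solution exists when $\mu(A)=0$, completing the equivalence. Finally, the trace bound comes essentially for free: pairing $Ad=d^{-1}$ with $d$ gives $d^\top A d=\sum_i d_i/d_i=p$, and combining with $d^\top A d\ge\mu(A)\|d\|^2$ yields $\tr(D^2)=\|d\|^2\le p/\mu(A)$.

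I expect the only genuine work to be the coercivity/compactness argument in the second paragraph — in particular verifying that sublevel sets cannot approach the boundary of the orthant, where the competition between the logarithmic barrier and the quadratic term must be controlled. Both the equivalence and the trace bound reduce to one-line pairings of the equation $Ad=d^{-1}$ against $y^\ast$ and against $d$, respectively, so they should require no further effort.
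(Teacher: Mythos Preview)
Your proof is correct. The paper does not actually prove this lemma; it simply cites \cite{Kal92} for the result. Your argument is a clean, self-contained convex-analysis proof: strict convexity of the barrier $f(d)=\tfrac12 d^\top A d-\sum_i\log d_i$ gives uniqueness, coercivity plus the boundary argument give existence when $\mu(A)>0$, the pairing of $Ad=d^{-1}$ against a nonnegative null vector $y^\ast$ of $A$ gives non-existence when $\mu(A)=0$, and pairing against $d$ itself gives the trace bound. All steps are sound, including the key point that $(y^\ast)^\top A y^\ast=0$ with $A\succeq 0$ forces $Ay^\ast=0$. So you have supplied a proof where the paper only supplies a reference.
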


\begin{proof}[Proof of Theorem~\ref{thm:Dregion}]
The existence and uniqueness of a solution to \eqref{eq:scaling} is established by Lemmas \ref{lem:A>0} and \ref{lem:D}. Indeed, we have
\[
\mu(A) \geq \min_{y\in \R^p\setminus\{0\}} \left\{ \frac{y^\top A y}{y^\top y}\right\} = \lambda_{\min}(A) > 0. 
\]

Suppose that $C$ is positive definite. 
By Lemma \ref{lem:A>0} we arrive at
\[
\mu(A) \geq \frac{\lambda_{\min}(C)}{1-\alpha}. 
\]
Since $\tr(D^2) = \sum_{i=1}^p D_{ii}^2$, we have by Lemma \ref{lem:D}, for any $i\in\{1,\ldots,p\}$,
\[
D_{ii}\leq \sqrt{\tr(D^2)} \leq \sqrt{\frac{p}{\mu(A)}} \leq \sqrt{\frac{p (1-\alpha)}{\lambda_{\min}(C)}}.
\]
Since $|A_{ij}| = \frac{1}{1-\alpha}|\hat{R}_{ij}C_{ij}|\leq \frac{1}{1-\alpha}$, we have
\[
\frac{1}{D_{ii}} = \sum_{j=1}^p D_{jj} A_{ij} \leq \sqrt{\tr(D^2) \sum_{j=1}^p A_{ij}^2} 
\leq \sqrt{\tr(D^2) \frac{p}{(1-\alpha)^2}}. 
\]
Thus, by \eqref{eq:ineqD2}, we get 
\[
\frac{1}{D_{ii}} \leq \sqrt{\frac{(1-\alpha)p}{\lambda_{\min}(C)} \frac{p}{(1-\alpha)^2}} = \frac{p}{\sqrt{(1-\alpha)\lambda_{\min}(C)}}. 
\]
\end{proof}

\subsection{Proof of Lemma \ref{lem:dual}}

\begin{proof}[Proof of Lemma \ref{lem:dual}]
First, observe that the off-diagonal penalty may be written using its dual norm representation. Specifically, we have
\[
\lambda \sum_{i\neq j}|R_{ij}| = \max_{\substack{|Z_{ij}|\leq \lambda\\ i\neq j}} \sum_{i\neq j} Z_{ij} R_{ij}.
\]
We enforce the constraints $R_{ii}=1$, $i=1,\ldots,p$, by introducing Lagrange multipliers $Z_{ii}$. In this way, the Lagrangian for the primal problem becomes 
\begin{align*}
\mathcal{L}(R, Z) &= \log\det(R) - \tr(S R) - \sum_{i\neq j} Z_{ij} R_{ij} - \sum_{i=1}^p Z_{ii}(R_{ii}-1) \\
&= \log\det(R)-\tr((S+Z)R) + \tr(Z).
\end{align*}
Setting $W=S+Z$, we have $\mathcal{L}(R, Z) = \log\det(R)-\tr(WR)+\tr(W-S)$. 

Stationarity with respect to $R$ gives 
$R^{-1} = W$.
Since $R$ is positive definite, so is $W$.

We express the Lagrangian solely in terms of the dual variable $W=R^{-1}$ to obtain the dual objective (to be minimized)
\begin{align*}
\mathcal{L}(W^{-1}, Z) = -\log\det(W) + \tr(W) \quad (\,+\,\mbox{constant terms}\,)
\end{align*}
with the constraint $W\in\Spp$ and
\[
|W_{ij}-S_{ij}|\leq \lambda\quad\forall\, i\neq j.
\]
Under the strict concavity of $\log\det$ and the affine equality constraints $R_{ii}=1$, strong duality holds. This guarantees that the optimal value of the primal problem coincides with that of the dual problem.
\end{proof}

\subsection{Proof of Lemma \ref{lem:biconvex}}
\begin{proof}[Proof of Lemma \ref{lem:biconvex}]
Let $f$ denote the objective function in \eqref{eq:main_problem}. It is clear that $f(\cdot,R)$ is strictly convex in $R$ and this fact was already noted in \cite[Proposition 4]{Carter}. Fix $R\in\Sppone$. We have 
\[
f(R,D) = -2(1-\alpha)\sum_{i=1}^p \log(d_{i}) + d^\top (R\odot C) d + [R\mbox{-terms}], 
\]
where $d=(D_{ii})_{i=1}^p \in\R^p$. 
By Lemma \ref{lem:A>0}, the matrix $R\odot C$ is positive definite. Hence, $f(R,\cdot)$ is a sum of strictly convex functions, making it strictly convex.
\end{proof}

\subsection{Proof of Lemma \ref{lem:charact}}

Since PCGLASSO optimization program is non-convex, we must employ concepts beyond the standard subgradient to analyze its properties, \cite{nonconvex}.
For a locally Lipschitz function $f\colon \R^n\to\R$, we define the generalized directional derivative of $f$ at a point $x$ in the direction $v$ by 
\[
f^\circ(x,v) = \limsup_{y\to x,\, h\downarrow0} \frac{f(y+h v)-f(y)}{h}.
\]
The Clarke subgradient of $f$ at $x$ is then given by
\[
\partial_C f(x) = \{ \xi\in\R^n\colon\xi^\top v \leq f^\circ(x,v)\mbox{ for all }v\in\R^n\}. 
\]
If $f$ is convex, the Clarke subgradient coincides with the usual subgradient of $f$. Moreover, if $f$ is differentiable at $x$, then $\partial_C f(x) = \{\nabla f(x)\}$.  Suppose that $f$ is differentiable and that $g$ is convex. Then, 
\[
\partial_C(f+g)(x)= \{ \nabla f(x)\} + \partial g(x). 
\]
Finally, the condition $0\in\partial_C f(x)$ is necessary for $x$ to be a local extremum. 

In the following lemma, we present a condition under which the (Clarke) subgradient of the objective function in \eqref{eq:main_problem} vanishes. Since the objective in \eqref{eq:main_problem} is biconvex, all critical points correspond to coordinate-wise minimizers. Recall that the operations $\diag{\cdot}$ and $\odiag{\cdot}$ as well as the matrix $J_p'$ are defined in the Section \ref{sec:notation}.

\begin{proof}[Proof of Lemma~\ref{lem:charact}]
Let $f$ be the unpenalized objective of \eqref{eq:main_problem}, $f\colon \Sppone\times\Diagp\to \R$ defined by
\[  
f(R,D) = -\log\det(R)-2(1-\alpha)\log \det(D) + \tr(C D R D).
\]  
Let $D^1 f$ and $D^2f$ denote the differentials of $f$ with respect to its first and second arguments, respectively. 

\textbf{Differentiation with respect to $R$:} 
Consider the directional derivative of $f(\cdot, D)$  in the direction of matrix $M\in\Szero$:
\begin{align*}
\left< D^{1} f(R,D)| M\right> &= \lim_{\varepsilon\to 0} \frac{1}{\varepsilon}(f(R+\varepsilon M,D)-f(R,D)) \\
&=\tr(M DC D)-\tr(R^{-1} M)\\
&= \left< \odiag{DC D -R^{-1}}|M\right>.
\end{align*}

\textbf{Differentiation with respect to $D$:} 
Next, we differentiate $f$ with respect to $D$ in the direction $H \in \Diag$:
\begin{align*}
\left< D^{2} f(R,D)| H\right> & = \lim_{\varepsilon\to 0} \frac{1}{\varepsilon}(f(R,D+\varepsilon H)-f(R,D)) \\
&=  2\,\tr(R D C H) - 2(1-\alpha)\, \tr(D^{-1} H)\\
&= 2\, \left< \diag{R D C}-(1-\alpha) D^{-1}|H\right>.
\end{align*}
Setting this derivative equal to zero (i.e., for optimality in the $D$-direction) for all $H \in \Diag$ yields
$(1-\alpha)D^{-1}=\diag{R D C}$, which is equivalent to  
\begin{align}\label{eq:diffD}
\diag{R D C D}= (1-\alpha)I_p.
\end{align}

Incorporating the non-smooth term $\lambda \|R\|_{1,\mathrm{off}}$ into the optimization, we obtain that
 $0\in\partial_C^{(R)}  \left(f(R, D)+\lambda\|R\|_{1,\mathrm{off}} \right)$  if and only if
\begin{align}\label{eq:diffR}
\odiag{R^{-1}-D C D} = \lambda\, \Pi,
\end{align}
where $\Pi$ belongs to the subgradient  $\partial \|R\|_{1,\mathrm{off}}$. 

Since $\diag{R}=I_p=\diag{C}$, it follows that by \eqref{eq:diffD} and \eqref{eq:diffR},
\begin{align}\label{eq:D-Ri}
\begin{split}
    (1-\alpha)I_p &=  \diag{R D C D} = \diag{R\, \diag{D C D}} + \diag{R\, \odiag{D C D}} \\
    &= D^2 + \diag{R  ( \odiag{R^{-1}} - \lambda \Pi )}\\
    &= D^2 + \diag{R  R^{-1}} - \diag{R \, \diag{R^{-1}}}-\lambda\, \diag{R \,\Pi} \\
    &= D^2 + I_p - \diag{R^{-1}} - \lambda\,\diag{R\, \Pi}.
    \end{split}
\end{align}
We have $\Pi\in\partial \|R\|_{1,\mathrm{off}}$if and only if $\diag{\Pi}=0$ and
\[
\begin{cases}
	\Pi_{ij}=\sign(R_{ij}), & R_{ij}\neq 0, i\neq j \\
	\Pi_{ij}\in[-1,1], & R_{ij}=0.
\end{cases}
\]
In particular, we have $\diag{R\,\Pi} = \diag{J_p' |R|}$, where $|R|=(|R_{ij}|)_{i,j}$. Indeed, one may verify that
\[
(R\,\Pi)_{ii} = \sum_{k=1}^p R_{k i}\Pi_{k i}  = \sum_{\substack{k=1,\ldots,p\\ k\neq i}} |R_{ki}| =(J_p' |R|)_{ii}.
\]
Finally, by \eqref{eq:D-Ri}, we deduce that
\begin{align*}
R^{-1}-DCD -\lambda\Pi &= \diag{R^{-1}-DCD} 
=\diag{R^{-1}} - D^2 \\
&=\alpha I_p - \lambda\,\diag{J_p' |R|}, 
\end{align*}
which is \eqref{eq:R-DCD}.

\end{proof}

\subsection{Proof of Lemma \ref{lem:cons}}

\begin{proof}[Proof of Lemma~\ref{lem:cons}]
    By \eqref{eq:R-DCD}, we have 
    \[
\hat{R}^{-1}-\hat{D} C \hat{D} =  \lambda\Pi + \alpha I_p  - \lambda\,\diag{J_p' |\hat{R}|}.
    \]
Since $\|\Pi\|_\infty\leq 1$, and $\|\diag{J_p' |\hat{R}|}\|_\infty\leq p-1$, we obtain 
\[
\|\hat{R}^{-1}-\hat{D}C\hat{D}\|_\infty\leq \lambda+|\alpha|+\lambda(p-1). 
\]
Further, if $C$ is positive definite, then 
\[
\| \hat{D}^{-1}(\hat{R}^{-1}-\hat{D}C\hat{D})\hat{D}^{-1}\|_\infty \leq \|\hat{D}^{-1}\|_\infty^2 \|\hat{R}^{-1}-\hat{D}C\hat{D}\|_\infty.
\]
Thus, the result follows from Theorem \ref{thm:Dregion}. 
\end{proof}

\subsection{Proof of Theorem \ref{thm:unique}}

We start with a couple of lemmas. 

\begin{lemma}\label{lem:convex}
Define the function $f\colon \Sppone\times \Diagp\to\R$ by
\begin{align}\label{eq:fun}
f(R,D)= -\log\det(R)-2(1-\alpha)\log \det(D) + \tr(R D C D).
\end{align}
Then, $f$ is convex at a point $(R,D)\in \Sppone\times \Diagp$ if and only if
\begin{align}\label{eq:ispositive}
 \tr(M R^{-1} M R^{-1}) + 4\, \tr(D C H M) 
+2(1-\alpha)\, \tr(D^{-2}H^2)+2\,\tr(R H C H)\geq0
\end{align}
for all $M\in \Szero$ and $H\in\Diag$.
\end{lemma}
\begin{remark}\label{rem:not_globally_convex}
If $C\neq I_p$, then the function in \eqref{eq:fun} is not globally convex. Indeed, if $C_{ij}\neq 0$ for some $i\neq j$, one may take $M\in \Szero$ defined by 
\[
M_{ij}=M_{ji}=-\sign(C_{ij}) \quad \text{and} \quad M_{kl}=0 \quad \text{for all } (k,l)\notin\{(i,j),(j,i)\}.
\]
Then,
\[
\tr(D C H M)  = -|C_{ij}| (D_{ii} H_{jj}+D_{jj}H_{ii}),
\]
which is negative whenever $D_{ii},D_{jj}$ and $H_{ii},H_{jj}$ are positive. Hence, by increasing the entries of $D\in \Diagp$, this negative term will eventually dominate the other terms in \eqref{eq:ispositive}, showing that the inequality fails and that $f$ is not convex on the entire domain.
\end{remark}

\begin{proof}[Proof of Lemma~\ref{lem:convex}]

Let $f$ denote the function \eqref{eq:fun}. Since $f$ is twice differentiable, it is convex at a given point if and only if its Hessian is semi-positive definite in that point. 

We express the Hessian of $f$ in block form:
\[
H(R,D) = \begin{pmatrix}
    D^{1,1} f(R,D) & D^{1,2} f(R,D) \\
    (D^{1,2} f(R,D))^\top  & D^{2,2} f(R,D)
\end{pmatrix},
\]
where $D^{i,j}$ denotes the second order differential in $i$th and $j$th variable, $i,j=1,2$. Then, $H(R,D)$ is positive semidefinite if and only if for all $M\in \Szero$ and $H\in \Diag$ the following inequality holds:
\begin{multline}\label{eq:HPD}
\left< D^{1,1} f(R,D) M\mid M\right>_{\Szero}+\left< D^{2,2} f(R,D) H\mid H\right>_{\Diag} \\
+ \left< D^{1,2} f(R,D) M\mid H\right>_{\Diag}+
\left< (D^{1,2} f(R,D))^\top H\mid M\right>_{\Szero}
\geq 0,
\end{multline}
where $\left<\cdot\mid\cdot\right>_{\Szero}$ and $\left<\cdot\mid\cdot\right>_{\Diag}$ denote the trace inner product on $\Szero$ and $\Diag$, respectively.

For $M_1,M_2\in \Szero$, we have 
\begin{align*}
\left<D^{1,1}f(R,D)M_1\mid M_2\right>_{\Szero}&=\frac{d^2}{d\varepsilon_1\,d\varepsilon_2}f(R+\varepsilon_1 M_1+\varepsilon_2 M_2,D)\mid_{\varepsilon_1=\varepsilon_2=0} \\
&=\frac{d}{d\varepsilon_1} \tr( (-(R+\varepsilon_1 M_1)^{-1}+D C D)\cdot M_2) \\
&= \tr( R^{-1} M_1 R^{-1} M_2). 
\end{align*}

For $H_1, H_2\in \Diag$, we obtain
\begin{align*}
\left<D^{2,2}f(R,D)H_1\mid H_2\right>_{\Diag} &=\frac{d^2}{d\varepsilon_1\,d\varepsilon_2}f(R,D+\varepsilon_1 H_1+\varepsilon_2 H_2)\mid_{\varepsilon_1=\varepsilon_2=0} \\
&= \frac{d}{d \varepsilon_1} \left( -2(1-\alpha)\, \tr( (D+\varepsilon H_1)^{-1})+2\,\tr(R (D+\varepsilon H_1) C H_2)\right)\\
&= 2(1-\alpha)\,\mathrm{tr}( D^{-1}H_1 D^{-1}H_2)+2\,\mathrm{tr}(R H_1 C H_2).
\end{align*}

For $M\in \Szero$ and $H\in \Diag$, we have
\begin{align*}
\left<D^{1,2}f(R,D)M\mid H\right>_{\Diag} &=  \frac{d^2}{d\varepsilon_1\,d\varepsilon_2}f(R+\varepsilon_1 M, D+\varepsilon_2 H)\mid_{\varepsilon_1=\varepsilon_2=0} \\
&=\frac{d}{d\varepsilon_1} 2\, \tr( (- D^{-1}+(R+\varepsilon_1 M) D C) H) \\
&=2\,\tr(M D C H) = \left< (D^{1,2} f(R,D))^\top H\mid M\right>_{\Szero}.
\end{align*}
Substituting these expressions into \eqref{eq:HPD} yields the inequality in \eqref{eq:ispositive}. This completes the proof.
\end{proof}

\begin{lemma}\label{lem:ineq}
    Suppose that $A$ is positive definite and $B$ is symmetric. Then,
\begin{align*}
     \tr(A B A B)\geq \lambda_{\min}(A)^2 \tr(B^2). 
\end{align*}
\end{lemma}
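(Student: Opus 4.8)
The plan is to diagonalize $A$ and reduce the inequality to a comparison between nonnegatively-weighted sums of squares. Write $\mu = \lambda_{\min}(A)>0$ and fix a spectral decomposition $A = U\Lambda U^\top$ with $U$ orthogonal and $\Lambda = \diag{\lambda_1,\ldots,\lambda_p}$, so that $\lambda_i \geq \mu$ for every $i$. Since the trace is invariant under orthogonal conjugation and $\tilde B := U^\top B U$ is again symmetric, one has $\tr(ABAB) = \tr(\Lambda \tilde B \Lambda \tilde B)$ and $\tr(B^2) = \tr(\tilde B^2) = \sum_{i,j}\tilde B_{ij}^2$.

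The next step is to expand the left-hand side entrywise: $\tr(\Lambda \tilde B \Lambda \tilde B) = \sum_{i,j}\lambda_i\lambda_j\,\tilde B_{ij}\tilde B_{ji} = \sum_{i,j}\lambda_i\lambda_j\,\tilde B_{ij}^2$, where the second equality uses the symmetry $\tilde B_{ji}=\tilde B_{ij}$. Every coefficient obeys $\lambda_i\lambda_j \geq \mu^2$ and every term $\tilde B_{ij}^2$ is nonnegative, so the sum is at least $\mu^2\sum_{i,j}\tilde B_{ij}^2 = \mu^2\,\tr(B^2)$, which is the assertion.

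A shorter operator-theoretic variant, if preferred, writes $\tr(ABAB) = \tr(C^2)$ with $C = A^{1/2}BA^{1/2}$ symmetric and applies twice the elementary bound $\tr(N^\top M^\top M N) \geq \lambda_{\min}(M^\top M)\,\tr(N^\top N)$, valid because $N^\top(M^\top M - \lambda_{\min}(M^\top M)I)N$ is positive semidefinite: first with $(M,N) = (A^{1/2}, BA^{1/2})$, giving $\tr(ABAB)\geq \lambda_{\min}(A)\,\tr(BAB)$, and then with $(M,N)=(A^{1/2},B)$, giving $\tr(BAB)\geq\lambda_{\min}(A)\,\tr(B^2)$, using $\lambda_{\min}((A^{1/2})^\top A^{1/2}) = \lambda_{\min}(A)$.

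There is essentially no obstacle. The only step warranting an explicit line is the entrywise expansion $\tr(\Lambda \tilde B \Lambda \tilde B) = \sum_{i,j}\lambda_i\lambda_j\tilde B_{ij}^2$, which hinges on $\tilde B$ (equivalently $B$) being symmetric; positive definiteness of $A$ enters only through $\mu>0$, and the same argument in fact yields the inequality for any positive semidefinite $A$.
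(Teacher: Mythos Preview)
Your proof is correct and follows essentially the same approach as the paper: diagonalize $A$ via its spectral decomposition, transfer the orthogonal conjugation to $B$, and then bound the entrywise sum $\sum_{i,j}\lambda_i\lambda_j\tilde B_{ij}^2$ term by term using $\lambda_i\lambda_j\geq\lambda_{\min}(A)^2$. The paper merely separates out the diagonal case first and then reduces to it, whereas you go straight to the general case; your additional operator-theoretic variant via $C=A^{1/2}BA^{1/2}$ is a nice bonus not present in the paper.
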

\begin{proof}
First, assume that $A$ is diagonal with positive entries. Then,
\[
\tr(A B A B) = \sum_{i,j} A_{ii}A_{jj}B_{ij}^2 \geq \min_i\{A_{ii}^2\} \sum_{i,j}B_{ij}^2 = \lambda_{\min}(A)^2 \tr(B^2). 
\]
If $A$ is not diagonal, write its spectral decomposition as $A=U \Lambda U^\top$, where $U$ is orthogonal and $\Lambda$ is the diagonal matrix of eigenvalues of $A$. Define $\tilde{B}=U^\top B U$, which is symmetric. Then,
\[
\tr(A B A B) = \tr(\Lambda \tilde{B} \Lambda \tilde{B})\geq \min_i\{ \Lambda_{ii}^2\} \tr(\tilde{B}^2)=\lambda_{\min}(A)^2 \tr(B^2),
\]
where the last equality follows from the invariance of the trace under orthogonal transformations.
\end{proof}

\begin{lemma}\label{lem:ineq2}
Fix $\gamma>0$. For any $H\in\Diag$ and any $M\in\Szero$, we have 
\[
\left|\tr(C H M)\right| \leq \frac{\gamma}{2} \opnorm{C-I_p}\, \tr(H^2)+\frac{1}{2\gamma} \|C-I_p\|_\infty \,\tr(M^2).
\]
\end{lemma}
\begin{proof}
Since $H$ is diagonal and $M\in\Szero$ (so that $M_{ii}=0$ for all $i$), a short calculation shows that
  \[
\tr(C H M) = \sum_{i\neq j} (H_{ii}+H_{jj})M_{ij} C_{ij}.
\]
Applying the inequality $h m \leq \frac12( h^2 \gamma + m^2/\gamma)$ for positive $\gamma$, we obtain
\begin{align*}
\left|\tr(C H M)\right| & \leq \sum_{i\neq j} |H_{ii}| |M_{ij}| |C_{ij}| \leq \sum_{i\neq j}  |C_{ij}| \frac1{2} \left( H_{ii}^2\gamma +  M_{ij}^2/\gamma \right) \\
&= \frac{\gamma}{2} \sum_{i} \left(\sum_{j\neq i}|C_{ij}|\right) H_{ii}^2 + \frac {1}{2\gamma} \sum_{i\neq j}  |C_{ij}| M_{ij}^2 \\
&\leq \frac{\gamma}{2} \opnorm{C-I_p} \,\tr(H^2)+\frac{1}{2\gamma} \|C-I_p\|_\infty \,\tr(M^2). 
\end{align*}  
\end{proof}

\begin{proof}[Proof of Theorem~\ref{thm:unique} (i) ]
Let $f$ denote the function in \eqref{eq:fun}. Since the penalty $R\mapsto \|R\|_{1,\mathrm{off}}$ is piecewise linear and continuous, its Hessian is a.e. zero. Thus, the function $(R,D)\mapsto f(R,D)+\lambda \|R\|_{1,\mathrm{off}}$ shares the same region of convexity as $f$.

Fix $\alpha<1$ and recall that $e=(1,\ldots,1)^\top\in\R^p$. For any $R\in\Sppone$ define function $\D(R)$ as the unique solution $D\in\Diagp$ to (cf. Eq. \eqref{eq:scaling})
\[
D \left( R\odot C \right) D e = (1-\alpha) e.
\]
By Theorem \ref{thm:Dregion}, such $D$ exists and is unique. 

We note that $(\hat{R},\hat{D})$ satisfies \eqref{eq:main_problem}, i.e.,
\[
(\hat{R},\hat{D}) \in\Argmin_{R,D} \left\{ f(R,D)+\lambda \|R\|_{1,\mathrm{off}} \right\}
\]
if and only if $\hat{D} = \D(\hat{R})$, where 
\[
\hat{R} \in\Argmin_{R\in \Sppone}\left\{ f(R, \D(R)) + \lambda\|R\|_{1,\mathrm{off}}\right\}
\]
We will show that the function $R\mapsto f(R,\D(R))+\lambda\|R\|_{1,\mathrm{off}}$ is convex on $\Sppone$, which will imply that there is only a unique global minimum to \eqref{eq:main_problem}.

The function $R\mapsto f(R,\D(R))+\lambda\|R\|_{1,\mathrm{off}}$ is convex at a point $R\in\Sppone$ if \eqref{eq:ispositive} holds with $D=\D(R)$ for all $M\in \Szero$ and $H\in\Diag$. For notational simplicity, we write $\D$ instead of $\D(R)$.

Perform the change of variables $H\mapsto \D H\in\Diag$ and $M\mapsto \D^{-1}M \D^{-1}\in\Szero$ in \eqref{eq:ispositive}. With these substitutions, inequality \eqref{eq:ispositive} becomes
\begin{align}\label{eq:ispositive2}
 \tr(M (\D R\D)^{-1} M (\D R\D)^{-1}) + 4\, \tr(C H M)
+2(1-\alpha)\, \tr(H^2)+2\,\tr(R H \D C\D H)\geq0.
\end{align}
We aim to ensure that the positive quadratic terms dominate the indefinite cross-term $\tr(C HM)$. 
Let $A=\frac{1}{1-\alpha} R\odot C$. By Theorem~\ref{thm:Dregion}, we have the bound
\[
\tr(\D^2)\leq  \frac{1-\alpha}{\lambda_{\min}(C)}p.
\]
Moreover, since $\lambda_{\max}(\D R\D) \le \tr(\D R\D) = \tr(D^2)$, we deduce that
\begin{equation}\label{eq:lmaxDRD}
\lambda_{\max}(\D R\D) \le \frac{1-\alpha}{\lambda_{\min}(C)}p.
\end{equation}
By Lemma~\ref{lem:ineq}, it follows that
\[
\tr(M (\D R\D)^{-1} M (\D R\D)^{-1}) \geq \lambda_{\min}((\D R\D)^{-1})^2 \tr(M^2) = \frac{1}{\lambda_{\max}(\D R\D)^2} \tr(M^2). 
\]
Also, note that
\[
\tr(R H \D C\D H)\geq 0. 
\]
Application of Lemma~\ref{lem:ineq2} (with $\tilde{C}:=C-I_p=\odiag{C}$) to bound the cross-term yields
\begin{align*}
\left|\tr(C H M)\right| \leq \frac{\gamma}{2} \opnorm{\tilde{C}} \,\tr(H^2)+\frac{1}{2\gamma} \|\tilde{C}\|_\infty\, \tr(M^2).
\end{align*}
Hence, inequality \eqref{eq:ispositive2} holds if 
\begin{align*}
\frac{1}{\lambda_{\max}(\D R\D)^2} \tr(M^2)+2(1-\alpha)\, \tr(H^2)
\geq 2\gamma \opnorm{\tilde{C}} \tr(H^2)+\frac{2}{\gamma} \|\tilde{C}\|_\infty \tr(M^2)
\end{align*}
holds for some $\gamma>0$ and 
for all $H\in\Diag$ and $M\in\Szero$. This inequality holds for all such $H$ and $M$ if and only if 
\[
2 \lambda_{\max}(\D R\D)^2 \|\tilde{C}\|_\infty  \leq \gamma \leq \frac{1-\alpha}{\opnorm{\tilde{C}}}.
\]
In view of the bound \eqref{eq:lmaxDRD}, the inequality \eqref{eq:ispositive2} holds for some $\gamma>0$ if
\begin{align}\label{eq:lastineq}
2 \left( \frac{p(1-\alpha)}{\lambda_{\min}(C)}
\right)^2 \|\tilde{C}\|_\infty \leq \frac{1-\alpha}{\opnorm{\tilde{C}}}.
\end{align}
We will show that the above inequality holds true under the assumption
\begin{align}\label{eq:ass_bound}
\| \tilde{C}\|_\infty\leq \frac{1}{\sqrt{2(1-\alpha)p^3}},
\end{align}
We have $\opnorm{\tilde{C}} \leq (p-1) \| \tilde{C}\|_\infty$ and by the  Gershgorin circle theorem,
\[
\lambda_{\min}(C)\geq 1- \opnorm{\tilde{C}} \geq 1-(p-1)\|\tilde{C}\|_\infty. 
\]
Thus,
\[
 \frac{\|\tilde{C}\|_\infty\opnorm{\tilde{C}} }{\lambda_{\min}(C)^2} 
  \leq (p-1)\frac{\|\tilde{C}\|_\infty^2}{(1-(p-1)\|\tilde{C}\|_\infty)^2}
\]
and direct computation shows that, under \eqref{eq:ass_bound}, the right hand side above is bounded by $(2p^2(1-\alpha))^{-1}$, which implies \eqref{eq:lastineq}. 
This completes the proof.
\end{proof}

\begin{proof}[Proof of Theorem~\ref{thm:unique} (ii)]
 For $K\in\Spp$, $\lambda>0$ and $\alpha<1$, define 
\[
f_{\lambda,\alpha}(K) = -\log\det(K)+\tr(C K) + \lambda\, p(K)+ \alpha \log \det(\diag{K}),
\]
where we denote $p(K) = \| \diag{K}^{-1/2}K\diag{K}^{-1/2}\|_{1,\mathrm{off}}$.

By Lemma \ref{lem:cons}, all critical points $K=\hat{K}$ of $f_{\lambda,\alpha}$ must satisfy
\begin{align}\label{eq:bound1}
\| K^{-1}-C\|_\infty  \leq  \frac{(\lambda p +|\alpha|)p^2}{(1-\alpha)\lambda_{\min}(C)}=:m_1.
\end{align}
Moreover, by Theorem \ref{thm:Dregion}, we have 
\begin{align}\label{eq:bound2}
\|K\|_\infty \leq \|\hat{D}\|_\infty^2 \leq \frac{p(1-\alpha)}{\lambda_{\min}(C)}=:m_2.
\end{align}
Define a convex subset $\mathcal{K}_{\lambda,\alpha}$ of $\Spp$ defined by
\[
\mathcal{K}_{\lambda,\alpha} = \mathrm{conv}\{ K\in\Spp\colon  \eqref{eq:bound1}\mbox{ and }\eqref{eq:bound2} \mbox{ hold true}\}.
\]
We note that under \eqref{eq:bound1} and \eqref{eq:bound2}, we have 
\begin{align*}
\frac{1}{p(m_1+1)} \leq \lambda_{\min}(K)\leq \lambda_{\max}(K)\leq p \,m_2\quad\mbox{and}\quad \frac{1}{1+m_1}\leq K_{ii}\leq m_2. 
\end{align*}
Indeed, by Gershgorin's circle theorem, we obtain
\[
\lambda_{\min}(K) = \frac{1}{\lambda_{\max}(K^{-1})} \geq \frac{1}{\max_i \sum_{j=1}^p |(K^{-1})_{ij}|} \geq \frac{1}{p (\max_{i,j} |(K^{-1}-C)_{ij}| + |C_{ij}|)}. 
\]
The upper bound on $\lambda_{\max}$ follows from the same argument and \eqref{eq:bound2}. 
The upper bound on $K_{ii}$ follows directly from \eqref{eq:bound2}, while the lower is based on the inequality 
\[
K_{ii} \geq 1/(K^{-1})_{ii} \geq 1/(C_{ii}+m_1). 
\]

Clearly, these bounds also hold for all $K\in\mathcal{K}_{\lambda,\alpha}$.

We will show that for sufficiently small $\lambda$ and $\alpha$, the restriction $f_{\lambda,\alpha}\big|_{\mathcal{K}_{\lambda,\alpha}}$ is convex; this establishes the uniqueness of the  minimizer. 
To ease notation, we further write $f$ for $f_{\lambda,\alpha}$ and $\mathcal{K}$ for $\mathcal{K}_{\lambda,\alpha}$. 

Since $f$ is continuous, to establish convexity, it is enough to show that $f( (A+B)/2)\leq (f(A)+f(B))/2$ for all $A,B\in\mathcal{K}$. 

Denote $g(K) = \alpha\log\det(\diag{K}) = \alpha\sum_{i} \log K_{ii}$.  Using the fact that for $a,b>0$
\[
0<\log\left(\frac{a+b}{2}\right)-\frac{\log(a)+\log(b)}{2}\leq \frac{(a-b)^2}{8\min\{a^2,b^2\}}, 
\]
we obtain for $A,B\in\mathcal{K}$,
\[
g\left(\frac{A+B}{2}\right) - \frac{g(A)+g(B)}{2} \leq \max\{\alpha,0\} (1+m_1)^2 \frac{\|A-B\|_F^2}{8},
\]
where $\|A\|_F = \sqrt{\tr(A^2)}$ is the Frobenius norm. 
Similarly, by \cite[Lemma 15]{INEQ}, we have for $A,B\in\Spp$,
\[
-\log\det\left(\frac{A+B}{2}\right) + \frac{\log\det(A)+\log\det(B)}{2}\leq -\frac{\|A-B\|_F^2}{8\max\{\lambda_{\max}(A)^2,\lambda_{\max}(B)^2\}}.
\]

We therefore obtain for $A,B\in\mathcal{K}$,
\begin{multline*}
    f\left( \frac{A+B}{2}\right)- \frac{f(A)+f(B)}{2} \\
    \leq -\frac{\|A-B\|_F^2}{8\, (p\, m_2)^2} - \frac{\lambda}{2} \left(p(A)+p(B)-2\, p\left(\frac{A+B}{2}\right)\right) \\
    + \max\{\alpha,0\}(1+m_1)^2\frac{\|A-B\|_F^2}{8}.
\end{multline*}
We write $M=(A+B)/2$ and $\Delta = (A-B)/2$. Then $f$ is convex if 
\begin{align}\label{eq:concavef}
\left(\frac{1}{p^2 m_2^2} - \max\{\alpha,0\}(1+m_1)^2 \right)\|\Delta\|_F^2 + \lambda\left(p(M+\Delta)+p(M-\Delta)-2\, p\left(M\right)\right)\geq 0.
\end{align}
We write $p(M)$ as $\sum_{i\neq j} p_{ij}(M)$, where
$p_{ij}(M) = |M_{ij}|/\sqrt{M_{ii}M_{jj}}$. 

For any convex function $f$, we have 
\[
f(x)+f(y)\geq 2 f\left(\frac{x+y}{2}\right)\geq 2 f(u)+2 f'(u)\left(\frac{x+y}{2}-u\right),
\]
which implies 
\[
-2 f(u) \geq -f(x)-f(y) +2 f'(u)\left(\frac{x+y}{2}-u\right).
\]
Applying this inequality to $f(z)=z^{-1/2}$ and 
\begin{gather*}
    x=(M_{ii}-\Delta_{ii})(M_{jj}-\Delta_{jj}),\quad y=(M_{ii}+\Delta_{ii})(M_{jj}+\Delta_{jj}), \quad 
    u = M_{ii} M_{jj},
\end{gather*}
we obtain
\[
-2 p_{ij}(M) \geq -\frac{|M_{ij}|}{\sqrt{x}} - \frac{|M_{ij}|}{\sqrt{y}} - \frac{p_{ij}(M)}{M_{ii}M_{jj}}\Delta_{ii}\Delta_{jj}.
\]
Thus,
\begin{align*}
 I_{ij}:=&   p_{ij}(M-\Delta)+p_{ij}(M+\Delta)-2p_{ij}(M) \\
        &\geq
    \frac{|M_{ij}-\Delta_{ij}|-|M_{ij}|}{\sqrt{x}} + \frac{|M_{ij}+\Delta_{ij}|-|M_{ij}|}{\sqrt{y}}-(1+m_1)^2|\Delta_{ii}\Delta_{jj}|,
\end{align*}
where we used the fact that on $\mathcal{K}$ ($M\in\mathcal{K}$ by convexity of $\mathcal{K}$) we have 
\[
\frac{p_{ij}(M)}{M_{ii}M_{jj}} \leq (1+m_1)^2.
\]

We consider the following complementary cases
\begin{itemize}
    \item[(I)] $|M_{ij}|\leq |\Delta_{ij}|/2$ or $\Delta_{ij}=0$, 
    \item[(II)] $|M_{ij}|> |\Delta_{ij}|/2>0$  
\end{itemize}
In (I), we have $|M_{ij}-\Delta_{ij}|-|M_{ij}|\geq 0$ and $|M_{ij}+\Delta_{ij}|-|M_{ij}|\geq0$, which implies that 
\[
I_{ij}\geq -(1+m_1)^2|\Delta_{ii}\Delta_{jj}|.
\]
In (II), we have $|M_{ij}-\Delta_{ij}|-|M_{ij}|< 0$ or $|M_{ij}+\Delta_{ij}|-|M_{ij}|<0$, but both cannot hold simultaneously.  Suppose that $|M_{ij}-\Delta_{ij}|-|M_{ij}|< 0$, so we necessarily have $|M_{ij}+\Delta_{ij}|-|M_{ij}|>0$. Since $y=x + 2(\Delta_{ii}M_{jj}+\Delta_{jj}M_{ii})$, we have 
\[
\frac{1}{\sqrt{y}} \geq \frac{1}{\sqrt{x}}-\frac{1}{x^{3/2}}(\Delta_{ii}M_{jj}+\Delta_{jj}M_{ii}). 
\]
Thus,
\begin{align*}
I_{ij}& \geq \frac{|M_{ij}-\Delta_{ij}|+|M_{ij}+\Delta_{ij}|-2|M_{ij}|}{\sqrt{x}}\\
& \qquad-\frac{|M_{ij}+\Delta_{ij}|-|M_{ij}|}{x^{3/2}}(\Delta_{ii}M_{jj}+\Delta_{jj}M_{ii}) 
- \frac{p_{ij}(M)}{M_{ii}M_{jj}}\Delta_{ii}\Delta_{jj}\\
& \geq -(1+m_1)^3 m_2 |\Delta_{ij}|(|\Delta_{ii}|+|\Delta_{jj}|)  - (1+m_1)^2 |\Delta_{ii}\Delta_{jj}|, \end{align*}
where we used the triangle inequality and the fact that $B=M-\Delta$ and $M$ belong to $\mathcal{K}$ (so that $x\geq (1+m_1)^{-2}$). We obtain the same bound in the case $|M_{ij}+\Delta_{ij}|-|M_{ij}|<0$. 
Therefore, we obtain
\begin{multline*}
p(M+\Delta)   +     p(M-\Delta) - 2\, p(M) = \sum_{i\neq j}I_{ij}\\
\geq - (1+m_1)^3 m_2  \sum_{i\neq j} |\Delta_{ij}|(|\Delta_{ii}|+|\Delta_{jj}|)  - (1+m_1)^2 \sum_{i\neq j} |\Delta_{ii}\Delta_{jj}| -C \|\Delta\|_F^2,
\end{multline*}
with
\[
C = p(1+m_1)^2(1+m_2(1+m_1)).
\]
Thus, \eqref{eq:concavef} holds if 
\[
\frac{\lambda_{\min}(C)^2}{p^4 (1-\alpha)^2}=\frac{1}{p^2 m_2^2} \geq \max\{\alpha,0\}(1+m_1)^2 + \lambda\, C.
\]
If $(\lambda,\alpha)\to(0,0)$, the right hand side converges to $0$, while the left has strictly positive limit. Thus, this inequality holds for sufficiently small $\lambda$ and $\alpha$. 
\end{proof}

\subsection{Proof of Theorem \ref{thm:conv_in_dist}}

\begin{lemma}\label{Lemma g - derivative} 
    Let $K=DRD$. The directional derivative of 
    \[
    g\colon \Spp\ni K\mapsto R\in \Sppone
    \]
    in a direction $U\in \Sym$ is given by 
    \[
    g'(K;U) =  D^{-1} U D^{-1}-\frac{1}{2} R\, \diag{U} D^{-2}-\frac{1}{2} D^{-2}\diag{U} R,
    \]
or equivalently, 
\begin{align}\label{g - derivative} 
    \operatorname{vec}(g'(K;U)) = M_R^\top (D^{-1}\otimes D^{-1}) \operatorname{vec}(U),
\end{align}
where $M_R$ is defined by \begin{align}\label{eq:MR}
M_{R}=I_{p^2}-\frac12 \mathrm{P}_{\mathrm{diag}}((I_p\otimes R) + (R\otimes I_p)).
\end{align}
\end{lemma}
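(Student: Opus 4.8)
The plan is to differentiate the explicit formula $g(K)=\diag{K}^{-1/2}\,K\,\diag{K}^{-1/2}$ directly, exploiting the fact that $\diag{K}$ is a diagonal matrix, so its inverse square root can be expanded entrywise, and then to translate the result into vectorized form.

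First I would fix $K\in\Spp$ and $U\in\Sym$, and note that $K+\varepsilon U\in\Spp$ for small $\varepsilon$, with $\diag{K+\varepsilon U}=\diag{K}+\varepsilon\,\diag{U}$ a diagonal matrix with positive entries. Writing $D=\diag{K}^{1/2}$ and applying the scalar expansion $(t+\varepsilon s)^{-1/2}=t^{-1/2}-\tfrac{\varepsilon}{2}t^{-3/2}s+o(\varepsilon)$ to each diagonal entry gives
\[
\diag{K+\varepsilon U}^{-1/2}=D^{-1}-\tfrac{\varepsilon}{2}\,D^{-3}\diag{U}+o(\varepsilon),
\]
where I use that diagonal matrices commute. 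Substituting this into $g(K+\varepsilon U)=\diag{K+\varepsilon U}^{-1/2}(K+\varepsilon U)\diag{K+\varepsilon U}^{-1/2}$ and keeping the first-order terms, together with the identities $D^{-1}KD^{-1}=R$, $D^{-1}K=RD$ and $KD^{-1}=DR$, should produce
\[
g'(K;U)=D^{-1}UD^{-1}-\tfrac12 R\,\diag{U}D^{-2}-\tfrac12 D^{-2}\diag{U}R,
\]
which is the first claimed identity. Differentiability of $g$ on the open set $\Spp$ is immediate since $K\mapsto\diag{K}^{-1/2}$ is smooth wherever the diagonal entries are positive, so the directional derivative coincides with the genuine derivative.

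Next I would vectorize. Applying $\v{AXB}=(B^\top\otimes A)\v{X}$, the relation $\v{\diag{U}}=\mathrm{P}_{\mathrm{diag}}\v{U}$, and the symmetry of $R$ and $D$ turns the formula above into
\[
\v{g'(K;U)}=\Big[(D^{-1}\otimes D^{-1})-\tfrac12(D^{-2}\otimes R)\mathrm{P}_{\mathrm{diag}}-\tfrac12(R\otimes D^{-2})\mathrm{P}_{\mathrm{diag}}\Big]\v{U}.
\]
To recognize this as $M_R^\top(D^{-1}\otimes D^{-1})\v{U}$, I would use that $\mathrm{P}_{\mathrm{diag}}$ is a diagonal projection matrix (with entries in $\{0,1\}$) in $\R^{p^2\times p^2}$, hence symmetric and commuting with the diagonal matrix $D^{-1}\otimes D^{-1}$. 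Expanding $M_R^\top(D^{-1}\otimes D^{-1})$ from the definition \eqref{eq:MR}, moving $D^{-1}\otimes D^{-1}$ through $\mathrm{P}_{\mathrm{diag}}$, and simplifying $(I_p\otimes R)(D^{-1}\otimes D^{-1})=D^{-1}\otimes RD^{-1}$ and $(R\otimes I_p)(D^{-1}\otimes D^{-1})=RD^{-1}\otimes D^{-1}$, then testing both sides against an arbitrary $\v{U}$ (so that $\mathrm{P}_{\mathrm{diag}}\v{U}=\v{\diag{U}}$ and $\diag{U}$ commutes with $D^{-1},D^{-2}$) should show the two expressions agree, which is \eqref{g - derivative}.

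The computation is essentially mechanical; the only step requiring genuine care is the last one, namely verifying that after pushing $D^{-1}\otimes D^{-1}$ past $\mathrm{P}_{\mathrm{diag}}$ the two "half" cross-terms $(D^{-2}\otimes R)\mathrm{P}_{\mathrm{diag}}$ and $(R\otimes D^{-2})\mathrm{P}_{\mathrm{diag}}$ match precisely the pieces $(I_p\otimes R)$ and $(R\otimes I_p)$ of $M_R^\top$ — that is, that the Kronecker bookkeeping closes exactly and no stray factors of $D$ survive.
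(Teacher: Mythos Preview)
Your proposal is correct and follows essentially the same route as the paper: both expand $\diag{K+\varepsilon U}^{-1/2}$ via the scalar Taylor series, substitute into $g(K+\varepsilon U)$, and then match the resulting matrix formula with $M_R^\top(D^{-1}\otimes D^{-1})\v{U}$ using standard Kronecker identities and the commutativity of diagonal matrices. The only cosmetic difference is that the paper un-vectorizes $M_R^\top(D^{-1}\otimes D^{-1})\v{U}$ to land on the matrix formula, whereas you vectorize the matrix formula and compare; the algebra is the same.
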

\begin{proof}[Proof of Lemma~\ref{Lemma g - derivative}]
        First, observe that for a fixed $a>0$, expansion of the function $\varepsilon\mapsto(a + \varepsilon)^{-1/2}$ around $0$, gives $a^{-1/2} -   \frac12 a^{-3/2} \varepsilon + o(\varepsilon)$. Thus, 
        \[
   \diag{K+\varepsilon U}^{-1/2}  =     (D^2+\diag{U} \varepsilon)^{-1/2} = D^{-1}-\varepsilon\frac12 D^{-3}\diag{U}+o(\varepsilon) I_p.
        \]
        Therefore
\begin{align*}
\varepsilon^{-1}(g(K&+\varepsilon U) - g(K))  = \varepsilon^{-1} \left(\diag{K+\varepsilon U})^{-1/2} (K+\varepsilon U) \diag{K+\varepsilon U}^{-1/2}-R\right)\\ 
&=\varepsilon^{-1} \left((D^{-1}-\varepsilon\frac12 D^{-3}\diag{U})(K+\varepsilon U) (D^{-1}-\varepsilon\frac12 D^{-3}\diag{U})-R +o(\varepsilon)I_p\right)\\
    &=  D^{-1} U D^{-1}-\frac{1}{2} D^{-1}K \diag{U} D^{-3}-\frac{1}{2} D^{-3}\diag{U}K D^{-1}  +  o(1)I_p\\
    &= D^{-1} U D^{-1}-\frac{1}{2} R\, \diag{U} D^{-2}-\frac{1}{2} D^{-2}\diag{U} R  +   o(1)I_p,
    \end{align*}
    where we have used the fact that $\diag{U}$ and $D$ commute. 
Thus, 
\begin{align*}
\operatorname{vec}(g'(K;U)) & = \operatorname{vec}(D^{-1} U D^{-1}-\frac{1}{2} R\, \diag{U} D^{-2}-\frac{1}{2} D^{-2}\diag{U} R).
\intertext{On the other hand, we have}
M_R^\top (D^{-1}\otimes D^{-1})\operatorname{vec}(U)
&= \left( I_{p^2}-\frac12 ((I_p\otimes R)+(R\otimes I_p))\mathrm{P}_{\mathrm{diag}}\right)\! \operatorname{vec}(D^{-1}UD^{-1}) \\
&= \operatorname{vec}(D^{-1}U D^{-1}\!- \!\frac{1}{2} R \,\diag{ D^{-1}U D^{-1}}\!-\!\frac{1}{2} \diag{D^{-1}U D^{-1}}R ). 
\end{align*}
Since $\diag{D^{-1}U D^{-1}}= D^{-2}\diag{U}$, we obtain \eqref{g - derivative}. 
\end{proof}

\begin{proof}[Proof of Theorem~\ref{thm:conv_in_dist}]

The statement follows from  \cite[Corollary 3.2 and Corollary A.1]{hejny2025asymptotic}. It suffices to verify that the loss and the penalty satisfy the corresponding assumptions. 
First, we check the conditions for the loss
\begin{equation*}
    \ell(X,K)=-\log\det(K)+\tr(KXX^\top).
\end{equation*}
This is a smooth map on the parameter space $\Theta=\Spp$ for every fixed $X\in\mathbb{R}^p$. The derivatives are
\begin{align*}
\nabla_K \ell(X, K) = -K^{-1} + XX^\top\quad\mbox{and}\quad 
\nabla^2_K \ell(X, K) = K^{-1} \otimes K^{-1}.
\end{align*}
The expected loss is
\begin{equation*}
    G(K)=\mathbb{E}[\ell(X,K)]=-\log(\det(K))+\tr(K\Sigma^\ast),
\end{equation*}
where $\Sigma^\ast=\mathbb{E}[XX^\top]$. Let $U$ be an open neighborhood of $K^\ast=(\Sigma^\ast)^{-1}$ in $\Spp$ of the form
\begin{equation*}
    U=\{K\in \Spp: c_1<\lambda_{\min}(K), \lambda_{\max}(K)<c_2\},
\end{equation*}
where $c_2\geq c_1>0$ are positive constants satisfying $c_1<\lambda_{\min}(K^\ast)$, $c_2>\lambda_{\max}(K^\ast)$, and where $\lambda_{\min},\lambda_{\max}$ denote the smallest and largest eigenvalues, respectively.
We need to check that
 \begin{align*}
    i) &\hspace{0.2cm}  \|\nabla^2_K \ell(X, K)\Vert\leq M(X) \text{ for } K\in U, \text{ for some function }M \text{ with } \mathbb{E}[M(X)^2]<\infty. \\
    ii) &\hspace{0.3cm} G(K)\text{ is }C^3 \text{ on } U \text{ and } C=\nabla^2 G(K)|_{K=K^\ast}=\Sigma^\ast\otimes\Sigma^\ast=\Gamma^\ast \text{ is positive definite}. \\
    iii) &\hspace{0.2cm} \text{\scalebox{0.97}{$\displaystyle \mathbb{E}[\nabla_K \ell(X, K)]\big|_{K=K^\ast}=0 \text{ and } C_{\triangle} =\mathbb{E}[\nabla_{\operatorname{vec}(K)}\ell(X,K)(\nabla_{\operatorname{vec}(K)}\ell(X,K))^{\top}]\big|_{K=K^\ast}<\infty.$}} \\
    iv) &\hspace{0.2cm} 
    \text{The sequence } (\hat{K}_n)_{n\geq 1} \text{ is uniformly tight}.\\
    v) &\hspace{0.2cm} \text{\scalebox{0.97}{For every compact $\mathcal{K}\subset\Spp$; $\sup_{K\in \mathcal{K}}|\ell(X,K)|\leq L(X)$ for some $L$ with $\mathbb{E}[L(X)]<\infty$.}}
\end{align*}
First, note that the closure of $U$ is a compact subset of positive definite matrices. Therefore, condition $i)$ follows from continuity of $\nabla^2_K \ell(X, K)=K^{-1}\otimes K^{-1}$. Condition $ii)$ is clear. For $iii)$, the expectation of $\nabla_K \ell(X, K)\big|_{K=K^\ast} = -(K^\ast)^{-1} + XX^\top$ is zero. Moreover, $C_{\triangle}=\mathrm{Cov}(\mathrm{vec}(XX^\top))<\infty$, by the finiteness of the fourth moment $\mathbb{E}[\|X\|^4]<\infty$. 

To argue uniform tightness in $iv)$, note that as $n\to \infty$, the estimator $\|\hat{K}_n^{-1}-\Sigma^\ast\|_{\infty}\leq\|\hat{K}_n^{-1}-S_n\|_{\infty}+\|S_n-\Sigma^\ast\|_{\infty}\stackrel{a.s.}{\longrightarrow
}0$. The first term goes to zero by Lemma \ref{lem:cons} with $\lambda = \gamma\,n^{-1/2},  \alpha = o(n^{-1/2})$, after renormalization by $\mathrm{diag}^{-1/2}(S_n)$. The second term goes to zero by consistency of the empirical covariance $S_n$.  
Therefore $\hat{K}_n^{-1}\stackrel{a.s.}{\longrightarrow
}\Sigma^\ast$, and by continuity of the inverse map at $\Sigma^\ast$, also $\hat{K}_n\stackrel{a.s.}{\longrightarrow
} (\Sigma^\ast)^{-1}=K^\ast$. Consistency of $\hat{K}_n$ implies uniform tightness. 

To obtain a uniform envelope in $v)$, observe that the trace can be bounded as $\tr(K XX^\top)=X^\top K X\leq \lambda_{\max}(K) \|X\|_2^2$. Given a compact set $\mathcal{K}\subset \Spp$, by continuity and compactness there exist positive constants $A_{\mathcal{K}}, B_{\mathcal{K}}$, such that $|\log(\det(K))|\leq A_{\mathcal{K}}$ and $\lambda_{\max}(K)\leq B_{\mathcal{K}}$ for every $K\in\mathcal{K}$. Thus $\sup_{K\in\mathcal{K}}|\ell(X,K)|\leq A_{\mathcal{K}}+B_{\mathcal{K}}\|X\|_2^2$, which is an integrable envelope of the loss, because $\mathbb{E}[\|X\|_2^2]<\infty$.
Consequently, the loss $\ell$ satisfies all regularity conditions required in \cite[Corollary 3.2]{hejny2025asymptotic}. 

Finally, note that the penalty\footnote{ We omit the negligible $\alpha=o(n^{-1/2})$ penalization term. } $\mathrm{Pen}(K)=f(g(K))$ is not a polyhedral gauge, but a composition of the polyhedral GLASSO norm $f(M)=\|M\|_{1, \text{off}}$ and the smooth map $g(K)=\diag{K}^{-1/2}K\diag{K}^{-1/2}$. Therefore, in order to conclude the proof, we verify the assumptions of \cite[Corollary A.1]{hejny2025asymptotic}. 
Precisely, we want to verify
that for any $U_1, U_2 \in \Sym$, such that $\sign(U_1)=\sign(U_2)$, we have  \begin{equation*}
    \sign(g(K^\ast)+\varepsilon\, g'(K^\ast;U_1))=\sign(g(K^\ast)+\varepsilon\, g'(K^\ast;U_2)),
\end{equation*}
for sufficiently small $\varepsilon>0$. Write $K^\ast$ as $DRD$, where $D\in\Diagp$ and $R\in\Sppone$. By Lemma~\ref{Lemma g - derivative}, the derivative of $g$ is
\begin{equation*}
g'(K^\ast;U) =  D^{-1} U D^{-1}-\frac{1}{2} R\, \mathrm{diag}(U) D^{-2}-\frac{1}{2} D^{-2}\mathrm{diag}(U) R.
\end{equation*}
If $R_{ij}\neq0$, then the sign of $g(K^\ast)_{ij}=R_{ij}$ is not changed by small perturbations. If $R_{ij}=0$, then $\sign(g'({K^\ast};U)_{ij})=\sign(( D^{-1} U D^{-1})_{ij})=\sign(U_{ij})$, hence the above holds since $\sign(U_1)_{ij}=\sign(U_2)_{ij}$ by assumption. \cite[Corollary A.1]{hejny2025asymptotic} completes the proof.
\end{proof}

\subsection{Proof of Theorem \ref{thm:pattern_recovery}}

\begin{lemma}\label{lem:MRtilde}\ 
\begin{itemize}
    \item[(i)]

If $R\in\Sppone$, then the matrix 
\[
\tilde{M}_R = M_R+\mathrm{P}_{\mathrm{diag}} 
\]
is invertible with the inverse given by
    \[
\tilde{M}_R^{-1}=\mathrm{P}_{\mathrm{diag}}^\bot+\frac12\mathrm{P}_{\mathrm{diag}}((I_p\otimes R)+(R\otimes I_p)).
    \]
    \item[(ii)] Let
\begin{align}\label{eq:Gammatilde}
    \tilde{\Gamma}&=\tilde{M}_{R^\ast}^{-1}((R^\ast)^{-1}\otimes (R^\ast)^{-1}).
\end{align}
    We have 
    \[
    \tilde{\Gamma}=\mathrm{P}_{\mathrm{diag}}^\bot((R^\ast)^{-1}\otimes(R^\ast)^{-1})+\frac12\mathrm{P}_{\mathrm{diag}}(((R^\ast)^{-1}\otimes I_p)+(I_p\otimes (R^\ast)^{-1})).
\]
Moreover, the matrix $\tilde{\Gamma}_{\Supp\Supp}$ is invertible.
\end{itemize}
\end{lemma}
\begin{proof}[Proof of Lemma~\ref{lem:MRtilde}]
(i)    Denote $O_R=(I_p\otimes R)+(R\otimes I_p)$ and $N_R = \mathrm{P}_{\mathrm{diag}}^\bot+\frac12\mathrm{P}_{\mathrm{diag}} O_R$. First, observe that for any $X\in \R^{p\times p}$, we have 
    \begin{align*}
    \frac{1}{2}\mathrm{P}_{\mathrm{diag}} O_R\mathrm{P}_{\mathrm{diag}} \operatorname{vec}(X)&=\frac{1}{2}\mathrm{P}_{\mathrm{diag}}\operatorname{vec}(R\,\diag{X}+\diag{X}R))=\operatorname{vec}(\diag{X})\\
    &=\mathrm{P}_{\mathrm{diag}}\operatorname{vec}(X),
    \end{align*}
    which implies that $\frac{1}{2}\mathrm{P}_{\mathrm{diag}} O_R\mathrm{P}_{\mathrm{diag}}=\mathrm{P}_{\mathrm{diag}} $ on $\operatorname{vec}(\R^{p\times p})=\R^{p^2}$. 
    
 We have    \begin{align*}
    N_R\tilde{M}_R&= \left(\mathrm{P}_{\mathrm{diag}}^\bot+\frac12\mathrm{P}_{\mathrm{diag}} O_R\right)\left(I_{p^2}-\frac12 \mathrm{P}_{\mathrm{diag}} O_R+\mathrm{P}_{\mathrm{diag}}\right)\\
        &=\mathrm{P}_{\mathrm{diag}}^\bot + \frac12\mathrm{P}_{\mathrm{diag}}  O_R - \frac14 \mathrm{P}_{\mathrm{diag}}  O_R \mathrm{P}_{\mathrm{diag}}  O_R+ \frac12 \mathrm{P}_{\mathrm{diag}} O_R\mathrm{P}_{\mathrm{diag}} \\
        &=\mathrm{P}_{\mathrm{diag}}^\bot + \frac12\mathrm{P}_{\mathrm{diag}}  O_R - \frac12\mathrm{P}_{\mathrm{diag}}  O_R+ \mathrm{P}_{\mathrm{diag}} =\mathrm{P}_{\mathrm{diag}}^\bot+\mathrm{P}_{\mathrm{diag}} = I_{p^2},
    \end{align*}
which implies that $\tilde{M}_R^{-1}=N_R$. 

(ii) The formula for $\tilde{\Gamma}$ follows directly from (i). \\
We show invertibility of $\tilde{\Gamma}_{\Supp\Supp}$. Assume $\tilde{\Gamma}_{\Supp\Supp}u_{\Supp}=0$ for some $u_{\Supp}\in\mathbb{R}^{|\Supp|}$. Our aim is to show that $u_{\Supp}=0$. Consider $U\in\R^{p\times p}$ such that $\operatorname{vec}(U)_{\Supp}=u_{\Supp}$ and $\operatorname{vec}(U)_{\Supp^c}=0$. We have
\begin{align*}
0=\tilde{\Gamma}_{\Supp\Supp}u_{\Supp}&=(\tilde{\Gamma}\operatorname{vec}(U))_{\Supp}=\operatorname{vec}(\odiag X)_{\Supp}+\frac{1}{2}\operatorname{vec}(\diag{XR^\ast+R^\ast X})_{\Supp},
\end{align*}
where we denoted $X=(R^\ast)^{-1}U(R^\ast)^{-1}$. In particular, for all $(i,j)\in \Supp$ with $i\neq j$, we have $X_{ij}=0$. On the other hand, by definition of $S$, we have $R^\ast_{ij}=0$ for $(i,j)\in \Supp^c$. Thus, 
\[
\frac12(XR^\ast+R^\ast X)_{ii} = \sum_{j} X_{ij}R_{ji}^\ast =X_{ii}. 
\]
This implies that 
\begin{align*}
\operatorname{vec}(\odiag X)_{\Supp}&+\frac{1}{2}\operatorname{vec}(\diag{XR^\ast+R^\ast X})_{\Supp} = \operatorname{vec}(X)_{\Supp} = ((R^\ast)^{-1}\otimes (R^\ast)^{-1}\operatorname{vec}(U) )_{\Supp} \\
&=((R^\ast)^{-1}\otimes (R^\ast)^{-1})_{\Supp\Supp} \,u_{\Supp}.
\end{align*}
Positive definiteness of $R^\ast$ implies positive definiteness of $((R^\ast)^{-1}\otimes (R^\ast)^{-1})_{\Supp\Supp}$. Thus, we obtain $u_{\Supp}=0$ and the proof is complete.
\end{proof}

\begin{lemma}\label{lem:chainrule}
   For a convex function $\psi\colon \Sym\to \R$ and a linear map $L\colon \Sym\to \Sym$, 
\[
\operatorname{vec}(\partial(\psi\circ L)(x)) = A^\top \operatorname{vec}(\partial \psi(L x)),
\]
where $A$ is defined via $\operatorname{vec}(L v) = A \operatorname{vec}(v)$ for any $v\in\Sym$.  
\end{lemma}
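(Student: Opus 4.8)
The plan is to recognise Lemma~\ref{lem:chainrule} as the classical subdifferential chain rule for the composition of a finite-valued convex function with a linear map, and then to translate the resulting adjoint operator into a matrix transpose through the vectorisation isometry of $\Sym$.

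First I would work intrinsically on the Euclidean space $\Sym$ with the trace inner product $\langle X,Y\rangle=\tr(XY)$, writing $L^{*}\colon\Sym\to\Sym$ for the adjoint of $L$, i.e.\ the unique linear map with $\langle Lv,w\rangle=\langle v,L^{*}w\rangle$ for all $v,w\in\Sym$. The statement then becomes the set identity $\partial(\psi\circ L)(x)=L^{*}\bigl(\partial\psi(Lx)\bigr)$. The inclusion ``$\supseteq$'' is elementary: if $t\in\partial\psi(Lx)$, then for every $y\in\Sym$ one has $\psi(Ly)\ge\psi(Lx)+\langle t,L(y-x)\rangle=\psi(Lx)+\langle L^{*}t,y-x\rangle$, so $L^{*}t\in\partial(\psi\circ L)(x)$; this needs no qualification. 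For the reverse inclusion I would argue via support functions. Since $\psi$ takes values in $\R$ it is finite on all of $\Sym$, hence continuous, so both $\partial\psi(Lx)$ and $\partial(\psi\circ L)(x)$ are nonempty, convex and compact, and the support function of each equals the one-sided directional derivative at the relevant point. The chain rule for directional derivatives is immediate from $(\psi\circ L)(x+tv)=\psi(Lx+tLv)$, giving $(\psi\circ L)'(x;v)=\psi'(Lx;Lv)$. Combining these facts and using $\langle t,Lv\rangle=\langle L^{*}t,v\rangle$,
\[
\sigma_{\partial(\psi\circ L)(x)}(v)=(\psi\circ L)'(x;v)=\psi'(Lx;Lv)=\max_{t\in\partial\psi(Lx)}\langle L^{*}t,v\rangle=\sigma_{L^{*}(\partial\psi(Lx))}(v)
\]
for every $v\in\Sym$. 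Two compact convex sets with the same support function coincide, so $\partial(\psi\circ L)(x)=L^{*}(\partial\psi(Lx))$.

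It then remains to pass to $\operatorname{vec}$-coordinates. Restricted to $\Sym$ the map $\operatorname{vec}$ is a linear isometry onto its image $\operatorname{vec}(\Sym)\subseteq\R^{p^{2}}$, that is $\langle X,Y\rangle=\operatorname{vec}(X)^{\top}\operatorname{vec}(Y)$. With $A$ as in the statement, for any $v,w\in\Sym$
\[
\operatorname{vec}(v)^{\top}\operatorname{vec}(L^{*}w)=\langle v,L^{*}w\rangle=\langle Lv,w\rangle=\bigl(A\operatorname{vec}(v)\bigr)^{\top}\operatorname{vec}(w)=\operatorname{vec}(v)^{\top}A^{\top}\operatorname{vec}(w),
\]
so $\operatorname{vec}(L^{*}w)$ and $A^{\top}\operatorname{vec}(w)$ agree on the subspace $\operatorname{vec}(\Sym)$; since $\operatorname{vec}(L^{*}w)$ lies in that subspace and, for the maps $L$ used in this paper, $A^{\top}$ preserves it as well, the two are in fact equal. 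Taking $w$ over $\partial\psi(Lx)$ converts the intrinsic identity of the previous step into $\operatorname{vec}(\partial(\psi\circ L)(x))=A^{\top}\operatorname{vec}(\partial\psi(Lx))$, which is the claim.

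The only genuinely non-trivial step is the inclusion $\partial(\psi\circ L)(x)\subseteq L^{*}(\partial\psi(Lx))$: general subdifferential chain rules require a constraint qualification, which here is automatic because $\operatorname{dom}\psi=\Sym$. The support-function argument packages this cleanly; the one point to watch is that $\partial\psi(Lx)$ is compact, so that $L^{*}(\partial\psi(Lx))$ is closed and equality of support functions really does force equality of the sets. The mild non-uniqueness of $A$ away from $\operatorname{vec}(\Sym)$ is harmless for the uses made of the lemma.
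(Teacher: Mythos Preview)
Your proof is correct. The paper actually states Lemma~\ref{lem:chainrule} without proof, treating it as a standard fact from convex analysis; your argument via directional derivatives and support functions is precisely the classical route to the subdifferential chain rule $\partial(\psi\circ L)(x)=L^{*}\bigl(\partial\psi(Lx)\bigr)$ under the qualification $\operatorname{dom}\psi=\Sym$, followed by the obvious vectorisation step. The caveat you flag about $A^{\top}$ only being pinned down on $\operatorname{vec}(\Sym)$ is real but, as you note, harmless for the application in the proof of Theorem~\ref{thm:pattern_recovery}, where the relevant matrix $M_{R}^{\top}(D^{-1}\otimes D^{-1})$ is defined on all of $\R^{p^{2}}$ and maps $\operatorname{vec}(\Sym)$ into itself.
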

 
\begin{lemma}\label{lem:subgradients}
Let $f\colon \Spp\to \R$ be defined by $f(M)=\|M\|_{1,\mathrm{off}}$.
If $\sign(U)=\sign(R)$, then
\[
\partial_U f'(R;U)=\partial f(R).
\]
\end{lemma}

\begin{proof}
For fixed $R$, define
\[
g(U):=f'(R;U).
\]
By the directional derivative formula for $f$,
\[
g(U)=\sum_{i\neq j:\,R_{ij}\neq 0}\sign(R_{ij})\,U_{ij}
+\sum_{i\neq j:\,R_{ij}=0}|U_{ij}|.
\]
Now assume $\sign(U)=\sign(R)$. Then for every $i\neq j$ such that $R_{ij}=0$, we also have $U_{ij}=0$.

Hence, the subdifferential of $g$ with respect to $U$ is given entrywise by
\[
(\partial_U g(U))_{ij}=
\begin{cases}
\{\sign(R_{ij})\}, & R_{ij}\neq 0,\\[1mm]
[-1,1], & R_{ij}=0,\ i\neq j,\\[1mm]
\{0\}, & i=j.
\end{cases}
\]
But this is exactly the subdifferential of the off-diagonal $\ell_1$ norm at $R$.
Therefore,
\[
\partial_U f'(R;U)=\partial f(R).
\]
\end{proof}

For a non-empty set $B$ define the parallel space 
by
\[
\mathrm{par}(B) = \mathrm{span}\{b-b'\colon b,b'\in B\}.
\]
Then, for any $b_0\in B$, 
\[
\mathrm{aff}(B) = b_0 + \mathrm{par}(B)
\]
is the affine hull of $B$, i.e., the smallest affine space containing $B$. 

\begin{lemma}\label{lem:ri}
Let $V$ be a finite‑dimensional real vector space, $A\subset V$ a linear subspace, and $B\subset V$ a non‑empty compact convex set. Assume that $A\cap \mathrm{par}(B) =\{0\}$. Then,  
\[
A+\mathrm{cone}(B) = V
\quad\iff\quad
        A \cap \mathrm{ri}(B) \neq \emptyset,
\]
where $\mathrm{cone}(B) = \{ \lambda b\colon b\in B, \lambda> 0\}$ and $\mathrm{ri}$ is the interior of $B$ relative to the affine hull of $B$. 
\end{lemma}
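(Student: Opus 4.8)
The plan is to quotient out $A$ and reduce the whole statement to one elementary fact about conic hulls. Let $\pi\colon V\to V/A$ be the canonical projection; it is linear, surjective, continuous and open. For a convex set $C$ write $\mathrm{cone}(C)=\{\lambda c\colon \lambda>0,\ c\in C\}$, which is again a convex cone. Two set identities get us started: first, $A+\mathrm{cone}(B)=\mathrm{cone}(A+B)$, because $A$ is a subspace and hence $\lambda A=A$ for every $\lambda>0$; second, $A+B$ is $A$-saturated, i.e. $A+B=\pi^{-1}(\pi(B))$. Combining these with the surjectivity of $\pi$, I would conclude that $A+\mathrm{cone}(B)=V$ if and only if $\mathrm{cone}(\pi(B))=V/A$.

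The next step is the key general lemma: for a convex set $C$ in a finite-dimensional space $U$, one has $\mathrm{cone}(C)=U$ if and only if $0\in\mathrm{int}(C)$. The direction $\Leftarrow$ is immediate, since a norm ball around $0$ contained in $C$ can be dilated to cover $U$. For $\Rightarrow$: if $0\notin\mathrm{int}(C)$ then, $C$ being convex, there is a nonzero functional $n$ with $\langle n,c\rangle\ge 0$ for all $c\in C$ (a supporting hyperplane at $0$ when $0\in\partial\overline{C}$, or strict separation when $0\notin\overline{C}$); then the open ray through any $v$ with $\langle n,v\rangle<0$ never meets $C$, so $\mathrm{cone}(C)\neq U$. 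Applying this with $U=V/A$ and $C=\pi(B)$ gives $A+\mathrm{cone}(B)=V\iff 0\in\mathrm{int}_{V/A}(\pi(B))$.

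It then remains to identify $0\in\mathrm{int}_{V/A}(\pi(B))$ with $A\cap\mathrm{ri}(B)\neq\emptyset$. The relative interior of a convex set is preserved by linear maps, so $\mathrm{ri}(\pi(B))=\pi(\mathrm{ri}(B))$; hence $0\in\mathrm{ri}_{V/A}(\pi(B))$ if and only if some point of $\mathrm{ri}(B)$ lies in $\ker\pi=A$, i.e. $A\cap\mathrm{ri}(B)\neq\emptyset$. Thus it suffices to show $\mathrm{int}_{V/A}(\pi(B))=\mathrm{ri}_{V/A}(\pi(B))$, that is, that $\pi(B)$ is full-dimensional in $V/A$. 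Since $\mathrm{par}(\pi(B))=\pi(\mathrm{par}(B))$, this amounts to $A+\mathrm{par}(B)=V$, which combined with the hypothesis $A\cap\mathrm{par}(B)=\{0\}$ is exactly $A\oplus\mathrm{par}(B)=V$. Granting this, $\pi(B)$ is full-dimensional, $\mathrm{int}$ and $\mathrm{ri}$ agree on it, and chaining all the equivalences above yields the claimed biconditional.

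The step I expect to be the main obstacle is precisely this full-dimensionality of $\pi(B)$ in $V/A$: the identity $\mathrm{int}=\mathrm{ri}$ on $\pi(B)$ — and with it the ``$\Leftarrow$'' direction of the lemma — genuinely needs that $A$ and $\mathrm{par}(B)$ together \emph{span} $V$, not merely that they intersect trivially. (If $A+\mathrm{par}(B)\subsetneq V$, one can have $0\in\mathrm{ri}(\pi(B))$ while $\mathrm{int}(\pi(B))=\emptyset$, so the careful point is to extract $A\oplus\mathrm{par}(B)=V$ from the ambient hypotheses.) Everything else — the two set identities, the conic-hull characterization, and the transfer of interior and relative interior through the open linear surjection $\pi$ — is routine.
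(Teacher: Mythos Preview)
Your route is essentially the paper's, modulo cosmetics: the paper projects onto $A^\perp$ via the orthogonal projection $P$, you pass to the quotient $V/A$ via $\pi$; the two differ only by the canonical isomorphism $V/A\cong A^\perp$. Both reductions land on the same sublemma, ``$\mathrm{cone}(\text{image of }B)=\text{target space}\iff 0$ lies in the interior of that image''.

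The obstacle you isolate is not a gap in your reasoning but a defect in the lemma as stated. The hypothesis $A\cap\mathrm{par}(B)=\{0\}$ does \emph{not} imply $A+\mathrm{par}(B)=V$, and without the latter the $\Leftarrow$ implication is simply false: take $V=\mathbb{R}^3$, $A=\{0\}\times\{0\}\times\mathbb{R}$, $B=[-1,1]\times\{0\}\times\{0\}$; then $A\cap\mathrm{par}(B)=\{0\}$ and $0\in A\cap\mathrm{ri}(B)$, yet $A+\mathrm{cone}(B)=\mathbb{R}\times\{0\}\times\mathbb{R}\neq V$. The paper's proof glosses over exactly this point by invoking the sublemma in the form ``$\mathrm{cone}(K)=W\iff 0\in\mathrm{ri}(K)$'', which is false in the $\Leftarrow$ direction for $K$ not full-dimensional in $W$ (e.g.\ $W=\mathbb{R}^2$, $K=[-1,1]\times\{0\}$); your version with $\mathrm{int}$ is the correct one, and it is precisely this correctness that makes the missing hypothesis visible. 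The $\Rightarrow$ direction, by contrast, goes through cleanly in your write-up: $\mathrm{cone}(\pi(B))=V/A$ already forces $\pi(B)$ to be full-dimensional, so $\mathrm{int}$ and $\mathrm{ri}$ coincide there and the chain closes. In the paper's intended application the direct-sum condition $A\oplus\mathrm{par}(B)=V$ does in fact hold (by a dimension count on the support $S$ and its complement), which is presumably why the issue went unnoticed; as a standalone lemma, however, only the forward implication is valid without that extra hypothesis.
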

\begin{proof}[Proof of Lemma \ref{lem:ri}]
 Decompose $V=A\oplus A^{\perp}$ and let \(P\colon V\to A^\perp\) denote the orthogonal projection onto the complement
  \(A^\perp\). Since \(A\cap \mathrm{par}(B)=\{0\}\), the restriction
  \[
    P\bigl|_{\mathrm{aff}(B)} \colon \mathrm{aff}(B)\longrightarrow A^\perp
  \]
  is injective, hence affine‐bijective onto its image. Indeed, pick $x,y\in \mathrm{aff}(B)$ and assume that $P(x)=P(y)$. By linearity of $P$, we have $x-y\in \ker P=A$. Moreover, we have also $x-y\in \mathrm{par}(B)$, so that the assumption forces $x=y$, proving injectivity. An injective affine map is automatically a bijection onto its image.

  In particular, we obtain  $ P(\mathrm{ri}(B)) =\mathrm{ri}(P(B))$ so that 
  \[    0\in\mathrm{ri}(P(B))
    \iff
    \exists\,b\in\mathrm{ri}(B)\text{ with }P(b)=0
    \iff
    A\cap\mathrm{ri}(B)\neq\emptyset.
  \]
  Next, observe that
  \[
    A + \mathrm{cone}(B) = V
    \iff
    P(A+\mathrm{cone}(B)) = P(V)=A^\perp
    \iff
    \mathrm{cone}(P(B))=A^\perp,
  \]
  since \(P\) is linear and \(P(A)=\{0\}\).
  Finally we invoke:
    If \(K\subset W\) is a nonempty compact convex subset of a real vector space \(W\),
    then
    \[
      \mathrm{cone}(K)=W
      \quad\iff\quad
      0\in\mathrm{ri}(K).
    \]
  Applying this result to \(K=P(B)\subset A^\perp\) gives
  \(\mathrm{cone}(P(B))=A^\perp\iff 0\in\mathrm{ri}(P(B))\).  Chaining all
  the equivalences,
  \[
    A+\mathrm{cone}(B)=V
    \iff
    \mathrm{cone}(P(B))=A^\perp
    \iff
    0\in\mathrm{ri}(P(B))
    \iff
    A\cap\mathrm{ri}(B)\neq\emptyset,
  \]
  proving the theorem.
\end{proof}

We are now ready to prove the main result of Section \ref{sec:recovery}. 
\begin{proof}[Proof of Theorem~\ref{thm:pattern_recovery}]
The proof is constructive and shows how one can derive the irrepresentability condition \eqref{new irrepresentability condition} from the asymptotic distribution \eqref{vec(U) objective}.
For the PCGLASSO, the penalty in \eqref{vec(U) objective} is $\operatorname{Pen}(K)=\| R\|_{1,\mathrm{off}}$, 
  which can be written as $\operatorname{Pen}(K)= f(g(K))$, where 
   \[
   f(M)=\|M\|_{1,\mathrm{off}}\quad\mbox{and}\quad g(K)=\diag{K}^{-1/2}K \diag{K}^{-1/2}.  
   \]
For notational simplicity, we omit the $o(1)$ penalization term for finite $n$. This term will not matter in the limit. Also, to ease notation, we write $K^\ast$ as $DRD$ instead of $D^\ast R^\ast D^\ast$. 
   The directional derivative of $\operatorname{Pen}$ in a direction $U\in\Sym$ is 
   \begin{equation*}
      \operatorname{Pen}'(K^\ast; U) = f'(g(K^\ast);g'(K^\ast;U))=f'(R ;g'(K^\ast;U)).
   \end{equation*}
Since the objective in  \eqref{vec(U) objective}  is strictly convex, the minimizer $\hat{U}$ satisfies 
\begin{align*}
    0 \in \Gamma^\ast\operatorname{vec}(\hat{U})-W + \gamma \operatorname{vec}\left( \partial_{U}\left( f'(R;\cdot) \circ g'(K^\ast;\cdot)\right)(\hat{U})\right),
\end{align*}
where 
\[
\Gamma^\ast = (K^\ast)^{-1}\otimes (K^\ast)^{-1} = (D^{-1}\otimes D^{-1}) (R^{-1}\otimes R^{-1})(D^{-1}\otimes D^{-1}).
\]
The directional derivative of $g$ is computed in Lemma \ref{Lemma g - derivative}:
\begin{align*}
    \operatorname{vec}(g'(K^\ast;U)) &= M_R^\top (D^{-1}\otimes D^{-1})\operatorname{vec}(U).
\end{align*}
Thus, by the subgradient chain rule (see Lemma \ref{lem:chainrule}), we obtain
\begin{align*}
    W \in \Gamma^\ast\operatorname{vec}(\hat{U}) +  \gamma  (D^{-1}\otimes D^{-1})M_{R} \, \operatorname{vec}\left(\partial_U f'(R;\cdot)(g'(K;\hat{U})) \right).
\end{align*}
Let $\langle U_{K^\ast}\rangle=\mathrm{span}\{U\in \Sym\colon \sign(U)=\sign(K^\ast)\}$ be the pattern space of $K^\ast$; i.e. the subspace of matrices of the same sparsity structure as $K^\ast$. Clearly $\langle U_R\rangle=\langle U_{K^\ast}\rangle$. Importantly, we see that $g'(K^\ast;\cdot)$ preserves  the pattern space, i.e., 
\begin{align}\label{eq:preserv}
g'(K^\ast;\langle U_{K^\ast}\rangle)\subset \langle U_{K^\ast}\rangle    
\end{align}
Indeed, suppose that  $U\in\Sym$ with $\sign(U)=\sign(K^\ast)$. Then, by Lemma \ref{Lemma g - derivative} 
\[
 g'(K;U) =  D^{-1} U D^{-1}-\frac{1}{2} R\, \diag{U} D^{-2}-\frac{1}{2} D^{-2}\diag{U} R.
\]
It is now clear that $K_{ij}^\ast=0=U_{ij}=R_{ij}$ implies that $g'(K;U)_{ij}=0$ and thus $g'(K;U)\in \langle U_{K^\ast}\rangle$. By linearity, we obtain \eqref{eq:preserv}. 

By the above fact and Lemma \ref{lem:subgradients}, we obtain for any $U\in\langle U_{R}\rangle$, 
\[
\partial_U f'(R;\cdot)(g'(K;U)) = \partial f(R).
\]
Now, we can express the limiting probability of support recovery as
\begin{align}\label{limiting probability in proof}
\lim_{n\rightarrow \infty}&\mathbb{P}\left(\sign(\hat{K}_n)=\sign(K^\ast)\right) =\mathbb{P}\left( \hat{U}\in\langle U_{K^\ast}\rangle\right)\nonumber \\ &=\mathbb{P} \left(W \in \Gamma^\ast\operatorname{vec}(\langle U_{K^\ast}\rangle) + \gamma(D^{-1}\otimes D^{-1})M_R   \operatorname{vec}(\partial f( R))\right)\nonumber\\
&=\mathbb{P} \left(\tilde{W} \in (R^{-1}\otimes R^{-1})\operatorname{vec}(\langle U_R\rangle) +\gamma M_R \operatorname{vec}(\partial f(R))\right),
\end{align}
where we denoted $\tilde{W}=(D\otimes D)W$ and used the fact that $D^{-1}\langle U_R\rangle D^{-1}=\langle U_R\rangle$ for any diagonal matrix $D$ with positive diagonal entries. 
Since $\tilde{W}$ is Gaussian, the probability of the pattern recovery goes to $1$ as $\gamma\to\infty$ if and only if the set 
\[
(R^{-1}\otimes R^{-1})\operatorname{vec}(\langle U_R\rangle) +\gamma M_R \operatorname{vec}(\partial f(R))
\]``fills out'' the whole space as $\gamma\to\infty$. Since $\mathrm{P}_{\mathrm{diag}} \operatorname{vec}(\partial f(R)) = \operatorname{vec}(\diag{\partial f(R)})=0$, we have $M_R \operatorname{vec}(\partial f(R))=\tilde{M}_R \operatorname{vec}(\partial f(R))$. Equivalently, after multiplying by $\tilde{M}_R^{-1}$, we need to show that $\cup_{\gamma>0}( A+\gamma B)=\operatorname{vec}(\Sym)=:V$ with (recalling \eqref{eq:Gammatilde})
\[
A = \tilde{\Gamma}\operatorname{vec}(\langle U_R\rangle)\quad\mbox{ and }\quad B=\operatorname{vec}(\partial f(R)).
\]
We note that $A$ is a linear subspace of $V$, while $B$ is a compact convex set in $V$. 
We will first show that $A\cap\mathrm{par}(B)=\{0\}$. Suppose that $v\in A\cap\mathrm{par}(B)$. Then, there exists $u\in \operatorname{vec}(\langle U_R\rangle)$ such that 
\begin{align}\label{eq:uv}
\tilde{\Gamma} u = v \in \mathrm{par}(B).
\end{align}
Denote by $S$ the support of $K^\ast$, i.e., $S=\{(i,j)\in \{1,\ldots,p\}^2\colon  K^\ast_{ij}\neq 0\}$.
We have that $u\in \operatorname{vec}(\langle U_R\rangle)$ if and only if $u_{\Supp^c}=0$ and $v \in \mathrm{par}(B)$ if and only if $v_{\Supp}=0$. Thus, \eqref{eq:uv} implies that 
$\tilde\Gamma_{\Supp\Supp} u_{\Supp} =0$ and $\tilde\Gamma_{\Supp^c\Supp} u_{\Supp} = v_{\Supp^c}$.  
Since $\tilde\Gamma_{\Supp\Supp}$ is invertible, we obtain $u=0$, which further implies that $v=0$. Thus, $A\cap\mathrm{par}(B)=\{0\}$ as claimed. 

By Lemma \ref{lem:ri}, we have 
\[
\bigcup_{\gamma>0}(A+\gamma B) = A + \mathrm{cone}(B) = V  
\]
if and only if $A\cap \mathrm{ri}(B)\neq \emptyset$, i.e.
\begin{equation}\label{irrepresentability PCGLASSO with subdiff}
    \tilde\Gamma \operatorname{vec}(\langle U_{R}\rangle)\cap \operatorname{vec}(\mathrm{ri}(\partial f(R)))\neq \emptyset.
\end{equation}
Moreover, if \eqref{irrepresentability PCGLASSO with subdiff} holds, then by Gaussianity of $\tilde{W}$, there is $c>0$ such that the limiting probability can be bounded from below by $1-e^{-c\gamma^2}$, for all $\gamma>0$. 

It remains to argue that \eqref{irrepresentability PCGLASSO with subdiff} is equivalent to the irrepresentability condition \eqref{new irrepresentability condition}. Denote $\pi=\operatorname{vec}(\odiag{K^\ast})$ and observe that 
\begin{align*}
\operatorname{vec}(\langle U_R\rangle)
&=\{u\in\operatorname{vec}(\Sym)\colon u_{\Supp^c}=0\},\\
\operatorname{vec}(\mathrm{ri}(\partial f(R)))
&=\{z\in\operatorname{vec}(\odiag{\Sym})\colon z_{\Supp}=\operatorname{vec}(\pi)_{\Supp},\;\|z_{\Supp^c}\|_\infty<1\}.
\end{align*}
Partition any vector $u\in\operatorname{vec}(\Sym)$ as $u^\top=(u_{\Supp}^\top ,u_{\Supp^c}^\top)$ and write
\[
\tilde\Gamma
=\begin{pmatrix}\tilde\Gamma_{\Supp\Supp}&\tilde\Gamma_{\Supp\Supp^c}\\
                \tilde\Gamma_{\Supp^c\Supp}&\tilde\Gamma_{\Supp^c\Supp^c}\end{pmatrix}.
\]
Suppose \eqref{irrepresentability PCGLASSO with subdiff}, so that there exists a vector $u$ such that 
\[
u_{\Supp^c}=0,\qquad \tilde\Gamma u=z,\qquad z_{\Supp}=\operatorname{vec}(\pi)_{\Supp},\qquad \|z_{\Supp^c}\|_\infty <1.
\]
In particular, 
\[
\tilde\Gamma_{\Supp\Supp}\,u_{\Supp}=\operatorname{vec}(\pi)_{\Supp}\qquad\mbox{and}\qquad
\tilde\Gamma_{\Supp^c\Supp}\,u_{\Supp}=z_{\Supp^c},
\]
so $u_{\Supp}=(\tilde\Gamma_{\Supp\Supp})^{-1}\operatorname{vec}(\pi)_{\Supp}$. Hence
\[
z_{\Supp^c}
=\tilde\Gamma_{\Supp^c\Supp}(\tilde\Gamma_{\Supp\Supp})^{-1}\operatorname{vec}(\pi)_{\Supp}
\]
and condition $\|z_{\Supp^c}\|_\infty<1$ gives exactly \eqref{new irrepresentability condition}. 

Now, suppose \eqref{new irrepresentability condition} and let  $z_{\Supp}=\operatorname{vec}(\pi)_{\Supp}$  with $\|z_{\Supp^c}\|_\infty<1$. Then, $u=\tilde{\Gamma}^{-1} z$ belongs to $\operatorname{vec}(\langle U_R\rangle)$, which completes the proof of the first part. 

If \eqref{new irrepresentability condition} is violated, then \eqref{irrepresentability PCGLASSO with subdiff} also does not hold. As a result, the intersection
\begin{equation*}
\tilde\Gamma \operatorname{vec}(\langle U_{R}\rangle) \cap \operatorname{vec}(\mathrm{aff}(\partial f(R)))
\end{equation*}
contains exactly one element, say $v_0$, such that $v_0\notin\operatorname{vec}(\mathrm{ri}(\partial f(R)))$.  (Note that the uniqueness of $v_0$ follows from the fact that \(A \cap \mathrm{par}(B) = \{0\}\), established above.)
We now consider the limiting probability \eqref{limiting probability in proof}, which can be expressed as 
\begin{equation*}
   \lim_{n\rightarrow \infty}\mathbb{P}\left(\sign(\hat{K}_n)=\sign(K^\ast)\right) = \mathbb{P} \left(\tilde{M}_R^{-1}\tilde{W} \in \mathcal{K}_\gamma\right),
\end{equation*}
where 
\begin{align*}
\mathcal{K}_\gamma &= \tilde{\Gamma}\operatorname{vec}(\langle U_R\rangle) + \gamma \operatorname{vec}(\partial f(R))\\
&=\tilde{\Gamma}\operatorname{vec}(\langle U_R\rangle) + \gamma (\operatorname{vec}(\partial f(R)) - v_0).
\end{align*}
Fix any $\gamma > 0$. Since $ 0 \notin \gamma(\operatorname{vec}(\mathrm{ri}(\partial f(R))) - v_0)$,
we also have $0 \notin \mathrm{ri}(\mathcal{K}_{\gamma})$. By convexity, the set $\mathcal{K}_{\gamma}$ must lie entirely on one side of some separating hyperplane through the origin. As a result, by symmetry, the centered Gaussian vector $\tilde{M}_R^{-1}\tilde{W}$ satisfies
\begin{equation*}
\mathbb{P}(\tilde{M}_R^{-1}\tilde{W} \in \mathcal{K}_{\gamma}) \leq \frac{1}{2}.
\end{equation*}
This completes the proof.
\end{proof}

\bibliographystyle{plainnat}
\bibliography{bibliography}

@misc{chojeckiwallin2025pcglassofast,
  title  = {{pcglassoFast}: Fast Partial Correlation Graphical LASSO},
  author = {Chojecki, Adam and Wallin, Jonas},
  year   = {2025},
  url    = {https://github.com/PrzeChoj/pcglassoFast},
  note   = {R package}
}

@misc{GSE6536,
  author       = {Stranger, Barbara E. and Nica, Alexandra C. and Forrest, Michelle S. and Montgomery, Stephen B. and Bird, Cathryn P. and Tavaré, Simon and Deloukas, Panos and Dermitzakis, Emmanouil T.},
  title        = {Genome-wide expression profiling in human lymphoblastoid cell lines},
  year         = {2007},
  howpublished = {\url{https://www.ncbi.nlm.nih.gov/geo/query/acc.cgi?acc=GSE6536}},
  note         = {NCBI Gene Expression Omnibus, GEO Series GSE6536, accessed July 30, 2025}
}

@article {HUB25,
    AUTHOR = {S\'{a}nchez G\'{o}mez, Jos\'{e} \'{A}. and Mo, Weibin and
              Zhao, Junlong and Liu, Yufeng},
     TITLE = {Hub detection in {G}aussian graphical models},
   JOURNAL = {J. Amer. Statist. Assoc.},
  FJOURNAL = {Journal of the American Statistical Association},
    VOLUME = {120},
      YEAR = {2025},
    NUMBER = {552},
     PAGES = {2397--2409},
}

@article {SPICE08,
    AUTHOR = {Rothman, Adam J. and Bickel, Peter J. and Levina, Elizaveta
              and Zhu, Ji},
     TITLE = {Sparse permutation invariant covariance estimation},
   JOURNAL = {Electron. J. Stat.},
  FJOURNAL = {Electronic Journal of Statistics},
    VOLUME = {2},
      YEAR = {2008},
     PAGES = {494--515},
   MRCLASS = {62H20 (62H12)},
  MRNUMBER = {2417391},
MRREVIEWER = {P. Laurie Davies},
}

@techreport{glassoFAST,
    author = {Sustik, M.A. and Calderhead, B.},
    title = {{GLASSOFAST}: An efficient {GLASSO} implementation},
    institution = {The University of Texas at Austin} ,
note={UTCS Technical Report TR-12-29},
    year = {2012}
}

@article {YuanLi07,
	AUTHOR = {Yuan, Ming and Lin, Yi},
	TITLE = {Model selection and estimation in the {G}aussian graphical
	model},
	JOURNAL = {Biometrika},
	FJOURNAL = {Biometrika},
	VOLUME = {94},
	YEAR = {2007},
	NUMBER = {1},
	PAGES = {19--35},
	ISSN = {0006-3444},
}

@article {CONCORD15,
    AUTHOR = {Khare, Kshitij and Oh, Sang-Yun and Rajaratnam, Bala},
     TITLE = {A convex pseudolikelihood framework for high dimensional
              partial correlation estimation with convergence guarantees},
   JOURNAL = {J. R. Stat. Soc. Ser. B. Stat. Methodol.},
  FJOURNAL = {Journal of the Royal Statistical Society. Series B.
              Statistical Methodology},
    VOLUME = {77},
      YEAR = {2015},
    NUMBER = {4},
     PAGES = {803--825},
      ISSN = {1369-7412},
}

@article {SPACE09,
    AUTHOR = {Peng, Jie and Zhou, Nengfeng and Zhu, Ji},
     TITLE = {Partial correlation estimation by joint sparse regression
              models},
   JOURNAL = {J. Amer. Statist. Assoc.},
  FJOURNAL = {Journal of the American Statistical Association},
    VOLUME = {104},
      YEAR = {2009},
    NUMBER = {486},
     PAGES = {735--746},
      ISSN = {0162-1459},
}

@article {meinshausen2006high,
	AUTHOR = {Meinshausen, Nicolai and B\"{u}hlmann, Peter},
	TITLE = {High-dimensional graphs and variable selection with the lasso},
	JOURNAL = {Ann. Statist.},
	FJOURNAL = {The Annals of Statistics},
	VOLUME = {34},
	YEAR = {2006},
	NUMBER = {3},
	PAGES = {1436--1462},
	ISSN = {0090-5364},
}

@article {ravikumar2011graphical,
    AUTHOR = {Ravikumar, Pradeep and Wainwright, Martin J. and Raskutti,
              Garvesh and Yu, Bin},
     TITLE = {High-dimensional covariance estimation by minimizing
              {$\ell_1$}-penalized log-determinant divergence},
   JOURNAL = {Electron. J. Stat.},
  FJOURNAL = {Electronic Journal of Statistics},
    VOLUME = {5},
      YEAR = {2011},
     PAGES = {935--980},
}

@article {Carter,
    AUTHOR = {Carter, Jack Storror and Rossell, David and Smith, Jim Q.},
     TITLE = {Partial correlation graphical {LASSO}},
   JOURNAL = {Scand. J. Stat.},
  FJOURNAL = {Scandinavian Journal of Statistics. Theory and Applications},
    VOLUME = {51},
      YEAR = {2024},
    NUMBER = {1},
     PAGES = {32--63},
}

@article{carter_arxiv_2025,
      title={Existence and optimisation of the partial correlation graphical lasso}, 
      author={Jack Storror Carter and Cesare Molinari},
      year={2025},
      journal={arXiv:2510.25712},
}

@ARTICLE{SPACE23,
  author={Cho, Younsang and Lee, Seunghwan and Kim, Jaeoh and Yu, Donghyeon},
  journal={IEEE Access}, 
  title={Sparse {P}artial {C}orrelation {E}stimation {W}ith {S}caled {L}asso and {I}ts {GPU}-{P}arallel {A}lgorithm}, 
  year={2023},
  volume={11},
  number={},
  pages={65093-65104},
  keywords={Correlation;Sparse matrices;Covariance matrices;Tuning;Linear programming;Minimization;Maximum likelihood estimation;Gaussian graphical model;graphics processing unit;parallel computation;precision matrix;scaled lasso;sparse partial correlation},
  doi={10.1109/ACCESS.2023.3289714}}

@article {SCREEN16,
    AUTHOR = {Huang, Shiqiong and Jin, Jiashun and Yao, Zhigang},
     TITLE = {Partial correlation screening for estimating large precision
              matrices, with applications to classification},
   JOURNAL = {Ann. Statist.},
  FJOURNAL = {The Annals of Statistics},
    VOLUME = {44},
      YEAR = {2016},
    NUMBER = {5},
     PAGES = {2018--2057},
      ISSN = {0090-5364},
}

@article {zwiernik2023entropic,
    AUTHOR = {Zwiernik, Piotr},
     TITLE = {Entropic covariance models},
   JOURNAL = {Ann. Statist.},
  FJOURNAL = {The Annals of Statistics},
    VOLUME = {53},
      YEAR = {2025},
    NUMBER = {4},
     PAGES = {1371--1405},
      ISSN = {0090-5364,2168-8966},
   MRCLASS = {62H99 (62H22 62R01)},
  MRNUMBER = {4959799},
MRREVIEWER = {Rebecca\ Morrison},
       DOI = {10.1214/24-AOS2474},
       URL = {https://doi.org/10.1214/24-AOS2474},
}

@article {MO68,
    AUTHOR = {Marshall, Albert W. and Olkin, Ingram},
     TITLE = {Scaling of matrices to achieve specified row and column sums},
   JOURNAL = {Numer. Math.},
  FJOURNAL = {Numerische Mathematik},
    VOLUME = {12},
      YEAR = {1968},
     PAGES = {83--90},
}

@article {Sin64,
    AUTHOR = {Sinkhorn, Richard},
     TITLE = {A relationship between arbitrary positive matrices and doubly
              stochastic matrices},
   JOURNAL = {Ann. Math. Statist.},
  FJOURNAL = {Annals of Mathematical Statistics},
    VOLUME = {35},
      YEAR = {1964},
     PAGES = {876--879},
}

@article{friedman2008sparse,
    author = {Friedman, J. and  Hastie, T. and Tibshirani, R.},
    title = {Sparse inverse covariance estimation with the graphical lasso},
    journal = {Biostatistics},
    year = {2008},
volume = {9},
number = {3},
pages = {432--441}
}

@book {nonconvex,
    AUTHOR = {Rockafellar, R. Tyrrell and Wets, Roger J.-B.},
     TITLE = {Variational analysis},
    SERIES = {Grundlehren der mathematischen Wissenschaften [Fundamental
              Principles of Mathematical Sciences]},
    VOLUME = {317},
 PUBLISHER = {Springer-Verlag, Berlin},
      YEAR = {1998},
     PAGES = {xiv+733},
      ISBN = {3-540-62772-3},
}

@article {Kal92,
    AUTHOR = {Khachiyan, Leonid and Kalantari, Bahman},
     TITLE = {Diagonal matrix scaling and linear programming},
   JOURNAL = {SIAM J. Optim.},
  FJOURNAL = {SIAM Journal on Optimization},
    VOLUME = {2},
      YEAR = {1992},
    NUMBER = {4},
     PAGES = {668--672},
}

@article {Opp30,
    AUTHOR = {Oppenheim, A.},
     TITLE = {Inequalities {C}onnected with {D}efinite {H}ermitian {F}orms},
   JOURNAL = {J. London Math. Soc.},
  FJOURNAL = {The Journal of the London Mathematical Society},
    VOLUME = {5},
      YEAR = {1930},
    NUMBER = {2},
     PAGES = {114--119},
      ISSN = {0024-6107},
}

@article {Banerjee,
    AUTHOR = {Banerjee, Onureena and El Ghaoui, Laurent and d'Aspremont,
              Alexandre},
     TITLE = {Model selection through sparse maximum likelihood estimation
              for multivariate {G}aussian or binary data},
   JOURNAL = {J. Mach. Learn. Res.},
  FJOURNAL = {Journal of Machine Learning Research (JMLR)},
    VOLUME = {9},
      YEAR = {2008},
     PAGES = {485--516},
      ISSN = {1532-4435},
   MRCLASS = {62H30 (62G05)},
  MRNUMBER = {2417243},
MRREVIEWER = {Reza Modarres},
}

@article {INEQ,
    AUTHOR = {Courtade, Thomas A. and Fathi, Max and Pananjady, Ashwin},
     TITLE = {Quantitative stability of the entropy power inequality},
   JOURNAL = {IEEE Trans. Inform. Theory},
  FJOURNAL = {Institute of Electrical and Electronics Engineers.
              Transactions on Information Theory},
    VOLUME = {64},
      YEAR = {2018},
    NUMBER = {8},
     PAGES = {5691--5703},
      ISSN = {0018-9448},
}

@article {biconvex,
    AUTHOR = {Gorski, Jochen and Pfeuffer, Frank and Klamroth, Kathrin},
     TITLE = {Biconvex sets and optimization with biconvex functions: a
              survey and extensions},
   JOURNAL = {Math. Methods Oper. Res.},
  FJOURNAL = {Mathematical Methods of Operations Research},
    VOLUME = {66},
      YEAR = {2007},
    NUMBER = {3},
     PAGES = {373--407},
      ISSN = {1432-2994},
}

@book {Horn13,
    AUTHOR = {Horn, Roger A. and Johnson, Charles R.},
     TITLE = {Matrix analysis},
   EDITION = {Second},
 PUBLISHER = {Cambridge University Press, Cambridge},
      YEAR = {2013},
     PAGES = {xviii+643},
      ISBN = {978-0-521-54823-6},
   MRCLASS = {15-01},
  MRNUMBER = {2978290},
MRREVIEWER = {Mohammad Sal Moslehian},
}

@article{hejny2025asymptotic,
  title={Asymptotic Distribution of Low-Dimensional Patterns Induced by Non-Differentiable Regularizers under General Loss Functions},
  author={Hejný, Ivan and Wallin, Jonas and Bogdan, Ma{\l}gorzata},
  journal={arXiv:2506.12621},
  year={2025}
}

@inbook{Frommlet22,
    author = {Bogdan, M. and Frommlet, F.},
    title = {Identifying Important Predictors in Large Data Bases - Multiple Testing and Model Selection},
    publisher = {Chapman \& Hall},
    year = {2024},
    chapter ={7} 
}

@article {bradic2011,
    AUTHOR = {Bradic, Jelena and Fan, Jianqing and Wang, Weiwei},
     TITLE = {Penalized composite quasi-likelihood for ultrahigh dimensional
              variable selection},
   JOURNAL = {J. R. Stat. Soc. Ser. B Stat. Methodol.},
  FJOURNAL = {Journal of the Royal Statistical Society. Series B.  Statistical Methodology},
    VOLUME = {73},
      YEAR = {2011},
    NUMBER = {3},
     PAGES = {325--349},
      ISSN = {1369-7412},
}

@article {fanfanbarut14,
    AUTHOR = {Fan, Jianqing and Fan, Yingying and Barut, Emre},
     TITLE = {Adaptive robust variable selection},
   JOURNAL = {Ann. Statist.},
  FJOURNAL = {The Annals of Statistics},
    VOLUME = {42},
      YEAR = {2014},
    NUMBER = {1},
     PAGES = {324--351},
      ISSN = {0090-5364},
}

@article {Wojtek,
    AUTHOR = {Rejchel, Wojciech and Bogdan, Ma{\l}gorzata},
     TITLE = {Rank-based {L}asso---efficient methods for high-dimensional
              robust model selection},
   JOURNAL = {J. Mach. Learn. Res.},
  FJOURNAL = {Journal of Machine Learning Research (JMLR)},
    VOLUME = {21},
      YEAR = {2020},
     PAGES = {Paper No. 244, 47},
      ISSN = {1532-4435},
   MRCLASS = {62J07 (62G35)},
  MRNUMBER = {4209530},
MRREVIEWER = {Jun Fan},
}

@book{Nocedal2006,
  added-at = {2009-08-21T12:21:08.000+0200},
  address = {New York, NY},
  author = {Nocedal, {Jorge} and Wright, {Stephen J.}},
  biburl = {https://www.bibsonomy.org/bibtex/28a42f1264dbca5b2e10460f70802807e/fbw_hannover},
  edition = {2. ed.},
  interhash = {22a7fec4243462045dfaabf3a92ff93f},
  intrahash = {8a42f1264dbca5b2e10460f70802807e},
  isbn = {978-0-387-30303-1},
  keywords = {Mathematical_optimization Mathematische_Optimierung Methoden_und_Techniken_der_Betriebswirtschaft Methoden_und_Techniken_der_Ingenieurwissenschaften Numerische_Mathematik Numerisches_Verfahren Optimierung Theorie},
  pagetotal = {XXII, 664},
  ppn_gvk = {502988711},
  publisher = {Springer},
  series = {Springer series in operations research and financial engineering},
  timestamp = {2009-08-21T12:21:09.000+0200},
  title = {Numerical optimization},
  year = 2006
}

@inproceedings{Foygel,
  author       = {Foygel, Rina and Drton, Mathias},
  title        = {Extended Bayesian Information Criteria for Gaussian Graphical Models},
  booktitle    = {Advances in Neural Information Processing Systems ({NIPS})},
  year         = {2010},
  pages        = {604--612},
  publisher    = {Curran Associates, Inc.}
}

\end{document}